\newtheorem{theorem}{Theorem}[section]
\newtheorem{proposition}[theorem]{Proposition}
\newtheorem{conjecture}[theorem]{Conjecture}
\newtheorem{lemma}[theorem]{Lemma}
\newtheorem{definition}[theorem]{Definition}
\newtheorem{construction}[theorem]{Construction}
\theoremstyle{definition}
\newtheorem{remark}[theorem]{Remark}
\newtheorem{example}[theorem]{Example}
\newcommand{\dashedrightarrow}[1][2pt]{%
  \settowidth{\@tempdima}{$\rightarrow$}\rightarrow
  \makebox[-\@tempdima]{\hskip-1.5ex\color{white}\rule[0.5ex]{#1}{1pt}}
  \phantom{\rightarrow}
}
\def\mut{\operatorname{mut}}
\def\Spec{\operatorname{Spec}}
\def\CC{\mathbb{C}}
\def\QQ{\mathbb{Q}}
\def\RR{\mathbb{R}}
\def\NN{\mathbb{N}}
\def\ZZ{\mathbb{Z}}
\def\PP{\mathbb{P}}
\def\dim{\operatorname{dim}}
\def\Conv{\operatorname{Conv}}
\def\cX{\mathcal{X}}
\def\Sng{\operatorname{Sing}}
\def\mut{\operatorname{mut}}
\def\tto{\dashedrightarrow}
\def\Newt{\operatorname{Newt}}
\def\fraks{\mathfrak{s}}
\def\Trop{\operatorname{Trop}}
\def\hol{\operatorname{hol}}
\def\Gr{\operatorname{Gr}}
\def\QH{\operatorname{QH}}
\def\Fuk{\mathcal{F}}
\def\D{{\bf D}}
\definecolor{darkgreen}{RGB}{0,153,0}
\definecolor{darkred}{RGB}{204,0,0}
\definecolor{darkblue}{RGB}{0,51,204}
\definecolor{red}{RGB}{242,43,29}
\begin{document}

\title{Exotic Lagrangian tori in Grassmannians}

\author{Marco Castronovo}
\address{Rutgers University - Hill Center for the Mathematical Sciences}
\email{marco.castronovo@rutgers.edu}

\thanks{Partially supported by NSF grant DMS 1711070. Any opinions, findings, and conclusions or recommendations expressed in this material are those of the author and do not necessarily reflect the views of the National Science Foundation.}

\begin{abstract}

We describe an iterative construction of Lagrangian tori in the complex
Grassmannian $\Gr(k,n)$, based on the cluster algebra structure of the coordinate
ring of a mirror Landau-Ginzburg model proposed by Marsh-Rietsch \cite{MR}.
Each torus comes with a Laurent polynomial, and local systems controlled by
the $k$-variables Schur polynomials at the $n$-th roots of unity. We use this
data to give examples of monotone Lagrangian tori that are neither displaceable
nor Hamiltonian isotopic to each other, and that support nonzero objects in
different summands of the spectral decomposition of the Fukaya category over $\CC$.

\end{abstract}

\maketitle
\thispagestyle{empty}

\section{Introduction}\label{SecIntro}

\subsection{Lagrangian tori}

The construction and classification of Lagrangian submanifolds is a driving
question in symplectic topology, with Lagrangian tori having a prominent
role. One reason for this is the origin of the field in the
Hamiltonian formulation of classical mechanics. In this context, the Arnold-Liouville
theorem constrains the level sets of a completely integrable system to be
Lagrangian tori; see e.g. Duistermaat \cite{Du}. A more recent motivation is
the geometric description of mirror symmetry, where Lagrangian tori
arise as generic fibers of Strominger-Yau-Zaslow fibrations \cite{SYZ}. Lagrangian
tori are also of interest in low-dimensional topology: the Luttinger surgery \cite{Lu}
operation was used by Auroux-Donaldson-Katzarkov \cite{ADK} to study symplectic
isotopy classes of plane curves; Vidussi \cite{Vid} and Fintushel-Stern \cite{FS}
found connections between Seiberg–Witten invariants and Lagrangian tori.
In general dimension, Lagrangian tori in the standard symplectic $\RR^{2n}$ have
been the subject of much investigation: Viterbo \cite{Vit} and Buhovsky \cite{Bu} constrained
their Maslov class; Chekanov \cite{Ch} classified those of product type \cite{Ch};
Chekanov-Schlenk \cite{CS} and Auroux \cite{Au} constructed examples that are
not products.

\subsection{Disk potentials}

A unifying way to think about these results
is to consider Lagrangian tori $L\subset\RR^{2n}=\CC^n$ as boundary conditions
for maps $u:D^2\to\CC^n$ satisfying the nonlinear Cauchy-Riemann type
equation $\overline{\partial}_J(u)=0$, where $J$ is an almost-complex structure
on the target that may vary from point to point and be non-integrable. One can
try to understand how $J$-holomorphic disks change as $L$ is deformed through
Lagrangian embeddings; many known results focus on deformations by Hamiltonian isotopies.
This line of thought generalizes to the global case, when $L\subset X$ is not in a Darboux chart of the symplectic
manifold $X$; however, $J$-holomorphic disks are not easy to describe
for an arbitrary target $X$. Since the work of Floer \cite{Fl} and Oh \cite{Oh},
the monotone case has been the focus of much investigation. A symplectic manifold
$(X^{2N},\omega)$ is monotone if $[\omega]$ and the first Chern class $c_1(X)$ are
positively proportional in $H^2(X;\RR)$; a Lagrangian $L\subset X$ is monotone if
the area $\omega(\beta)$ of disk classes $\beta\in H_2(X,L;\RR)$ is positively
proportional to their Maslov index $\mu(\beta)$. In this setting, for generic $J$
the moduli space $\mathcal{M}_J(L,\beta)$ of unparametrized $J$-holomorphic
disks with boundary on $L$, homology class $\beta$ and a boundary marked point $\bullet$
is a compact manifold of dimension $\mu(\beta)+\operatorname{dim}(L)-2$. One can encode counts of $J$-holomorphic
disks in a finite generating function called disk potential, and try
to establish general properties of the function that may imply something
about its coefficients. The disk potential of a monotone Lagrangian torus $L^N\subset X^{2N}$
is defined as
$$W_L = \sum_{\beta\in H_2(X,L;\ZZ)}c_\beta(L)x^{\partial\beta}\in\CC[x_1^{\pm},\ldots ,x_{N}^{\pm}] \quad ;$$
here the degree $c_\beta(L)=\operatorname{deg}\left(ev_\bullet:\mathcal{M}_J(L,\beta)\to L\right)\in\ZZ$
of the evaluation map $ev_\bullet$ at the marked point $\bullet\in\partial D^2$ is independent of $J$,
and rigid disks have $\mu(\beta)=2$; monotonicity implies that 
$c_\beta(L)\neq 0$ for finitely many classes $\beta$.
When writing the disk potential, we implicitly assume the choice of a basis
of cycles $\gamma_1,\ldots ,\gamma_N\in H_1(L;\ZZ)\cong\ZZ^N$, so that $J$-holomorphic
disks with boundary of class $\partial\beta = k_1\gamma_1 +\cdots +k_N\gamma_N$
contribute to the monomial $x^{\partial\beta}=x^{k_1}\cdots x^{k_N}$.
A Hamiltonian isotopy $\phi^t$ gives an isomorphism $(\phi^t)_*: H_1(L;\ZZ)\to H_1(\phi^t(L);\ZZ)$,
and $W_{\phi^t(L)}=W_L$ in the induced basis of cycles.
It is known that the critical points of $W_{L}$ obstruct
Hamiltonian displaceability; see Cho-Oh \cite[Proposition 7.2]{CO} for toric
moment fibers, Auroux \cite[Proposition 6.9]{AuT} and Sheridan
\cite[Proposition 4.2]{Sh} for a general discussion. Disk potentials have been used by Vianna \cite{Vi1, Vi2}
to distinguish infinitely many monotone Lagrangian tori in complex surfaces
$X$ of Fano type; see also Pascaleff-Tonkonog \cite{PT}.

\subsection{A cluster construction}

In this article, we construct Lagrangian tori in a class of Fano manifolds
of arbitrarily large dimension: the Grassmannians $\Gr(k,n)$ of complex
$k$-dimensional linear subspaces in $\mathbb{C}^n$.

\begin{construction}\label{ConTori}
Given integers $1\leq k < n$, for any Pl\"{u}cker sequence $\fraks$ of type $(k,n)$
there is a corresponding Lagrangian torus $L_\fraks\subset\Gr(k,n)$, equipped
with a canonical basis of cycles $\gamma_d\in H_1(L_\fraks;\ZZ)$ labeled by
Young diagrams $d\subseteq k\times (n-k)$. The torus comes with a rational function
$W_\fraks$ of formal variables $x_d$.
\end{construction}

The Pl\"{u}cker sequences $\fraks$ are based on the notion of quiver mutation
from representation theory; see Section \ref{SecConstruction} for more details
and Example \ref{ExPluckerSeq25} below.
The Lagrangian tori $L_\fraks$ are obtained from algebraic degenerations
to (singular) toric varieties $\Gr(k,n)\rightsquigarrow X(\Sigma_\fraks)$, using
a general technique for constructing completely integrable systems on complex
projective manifolds studied by Harada-Kaveh \cite{HK}; the notion of toric
degeneration is explained in Section \ref{SecLagrangians}. Such degenerations
of Grassmannians have been studied by Rietsch-Williams \cite{RW} in connection
with the theory of Okounkov bodies \cite{Ok, LM, KK}.
All Pl\"{u}cker sequences start from a single initial
seed, and the rational functions $W_\fraks$ are obtained by explicit rational
changes of variable from a single initial Laurent polynomial $W_0$, whose variables are labeled by
those Young diagrams $d\subseteq k\times(n-k)$ that are rectangles. In \cite{Ca},
it was proved that $W_0$ is in fact the disk potential of the monotone Lagrangian
torus fiber of the Gelfand-Cetlin integrable system introduced by Guillemin-Sternberg \cite{GS}.
The formulation of the construction as iterative procedure is particularly convenient
for computational purposes. To illustrate this point, we created a random walk that
generates Pl\"{u}cker sequences $\fraks$ of arbitrary length, and computes the
corresponding Laurent polynomials $W_\fraks$ explicitly; the code is available for
inspection and experiments \cite{CaCode}.

\subsection{Topology of Laurent/positivity phenomena}

By computing a few examples of $W_\fraks$, one quickly notices the following
two phenomena, which are not a direct consequence of the construction:

\begin{enumerate}
	\item the rational function $W_\fraks$ is a Laurent polynomial ;
	\item the coefficients of each Laurent polynomial $W_\fraks$ are natural numbers .
\end{enumerate}

Property (1) is related to the Laurent phenomenon of cluster algebras, a
notion developed by Fomin-Zelevinsky \cite{FZ}. Think each $x_d$ as a Pl\"{u}cker
coordinate on the dual Grassmannian $\Gr^\vee(k,n)=\Gr(n-k,n)\subset\PP^{{n\choose k}-1}$
in its Pl\"{u}cker embedding; the definition of Pl\"{u}cker coordinate is
recalled in Definition \ref{DefPluckerCoordinate}. Each Pl\"{u}cker sequence
$\fraks$ singles out an open algebraic torus chart $T_\fraks\subset U_{k,n}=\Gr^\vee(k,n)\setminus D^\vee_{FZ}$ in
the complement of a particular divisor $D^\vee_{FZ}$, and the
global regular functions $\mathcal{A}_{k,n}=\mathcal{O}(U_{k,n})$ form a cluster algebra;
see Scott \cite{Sc}.
The space $U_{k,n}$ is a smooth affine variety known as open positroid stratum, and
its properties have been the focus of several works in representation theory,
combinatorics, topology and mirror symmetry \cite{Lus, Po, KLS, STWZ, RW}.
By a result of Marsh-Rietsch \cite{MR}, one can think of each
$W_\fraks$ as restriction $W_{\fraks}=W_{\vert T_\fraks}$ of a single global
regular function $W\in\mathcal{A}_{k,n}$ called Landau-Ginzburg potential.
Property (2) is related to positivity of cluster algebras, which has been proved
by Gross-Hacking-Keel-Kontsevich \cite{GHKK}. Their proof consists in interpreting
the coefficients of certain elements of a cluster algebra as generating functions
of tropical curves called broken lines. In mirror symmetry, broken lines
are expected to correspond to the $J$-holomorphic disks of symplectic topology,
and this heuristic leads us to the following.

\begin{conjecture}(see the more precise Conjecture \ref{ConjPotentials})\label{ConjIntro}
The Laurent polynomial $W_\fraks$ is an invariant of the Hamiltonian isotopy class
of the Lagrangian torus $L_\fraks\subset\Gr(k,n)$.
\end{conjecture}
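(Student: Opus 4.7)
The plan is to establish the stronger statement $W_\fraks = W_{L_\fraks}$, where $W_{L_\fraks}$ denotes the disk potential of $L_\fraks$ written in the canonical basis of cycles $\{\gamma_d\}$. Once this identification is proved, Conjecture \ref{ConjIntro} follows from the general fact already recalled in the introduction: disk potentials of monotone Lagrangian tori are invariants of the Hamiltonian isotopy class, by Cho-Oh \cite{CO}, Auroux \cite{AuT}, and Sheridan \cite{Sh}.

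The natural attack is induction on the length of the Pl\"ucker sequence $\fraks$. The base case is handled by the main result of \cite{Ca}: for $\fraks_0$ the initial seed, $L_{\fraks_0}$ is the Gelfand-Cetlin fiber and $W_0$ is exactly its disk potential. For the inductive step, suppose $\fraks' = \mut_v(\fraks)$ is obtained from $\fraks$ by a single quiver mutation at a vertex $v$. Then one needs two ingredients: (a) a geometric relationship between $L_\fraks$ and $L_{\fraks'}$ inside $\Gr(k,n)$---ideally a Hamiltonian isotopy, or at worst a wall-crossing through a degenerate intermediate---together with an explicit identification of $H_1$ bases; and (b) a proof that under this identification $W_{L_\fraks}$ transforms into $W_{L_{\fraks'}}$ via the rational change of variable prescribed by the cluster mutation at $v$.

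For (a) the natural approach is to exploit that both $L_\fraks$ and $L_{\fraks'}$ arise as generic fibers of integrable systems on $\Gr(k,n)$ associated with neighboring toric degenerations $\Gr(k,n)\rightsquigarrow X(\Sigma_\fraks)$ of the Harada-Kaveh type \cite{HK}, and to realize the transition between them as a piecewise-linear surgery on the polytopes whose smoothing lifts to a Hamiltonian isotopy off a wall supporting the singular member. For (b) the expected mutation formula matches a wall-crossing formula in the style of Auroux \cite{AuT}: the change in the count of Maslov-index-$2$ disks when crossing the wall is controlled by Maslov-index-$0$ disks bounding the singular fiber, and this correction should reproduce the Fomin-Zelevinsky rule. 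The broken-lines interpretation of cluster monomials from Gross-Hacking-Keel-Kontsevich \cite{GHKK} supplies the algebraic shadow of this correspondence, and guides the combinatorial matching.

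The principal obstacle is part (a): realizing cluster mutations as genuine Hamiltonian isotopies in arbitrary dimension requires delicate symplectic control near the singular central fibers of the toric degenerations $X(\Sigma_\fraks)$, something well-understood only in low dimensions or for product-type wall-crossings. An alternative that sidesteps (a) is to compute $W_{L_\fraks}$ directly by a disk count on the toric degeneration $X(\Sigma_\fraks)$, matching the combinatorial output with $W_\fraks$ for each $\fraks$ separately; this converts the difficulty into a disk-counting problem on singular toric varieties, which is still delicate but may be more tractable for the specific degenerations at hand, given the positivity observed in Property (2).
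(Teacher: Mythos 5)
The statement you are proving is a conjecture, and your proposal is a strategy rather than a proof: the gap you flag in step (a) is exactly the open problem, and it is not closed anywhere in your argument or in the paper. Your reduction --- prove $W_\fraks = W_{L_\fraks}$ and then invoke Hamiltonian invariance of the disk potential --- is the same reduction the paper uses, but the paper only carries it out under a hypothesis you do not impose: that the central fiber $X(\Sigma_\fraks)$ of the toric degeneration admits a \emph{small} toric resolution (Definition \ref{DefSmallResolution}), in which case the Nishinou--Nohara--Ueda theorem identifies $W_\fraks$ with the disk potential directly, with no induction on the mutation sequence. This is precisely your ``alternative that sidesteps (a),'' and it is the only route that is actually known to work; it succeeds for all $\fraks$ when $k=2$ (Lemma \ref{LemmaK2}, via Nohara--Ueda) and for $32$ of the $34$ charts in the $\Gr(3,6)$ computation, but fails to be checkable in general. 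Your inductive step, by contrast, requires realizing each algebraic cluster mutation as a topological wall-crossing through a Lagrangian bounding Maslov-$0$ disks; the paper explicitly raises this as a question it does not investigate, and no argument in the literature supplies the ``delicate symplectic control near the singular central fibers'' you would need in arbitrary dimension. So the principal obstacle you name is not a technical inconvenience to be deferred --- it is the entire content of the conjecture.

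Two further cautions. First, your target $W_\fraks = W_{L_\fraks}$ for \emph{all} $\fraks$ is strictly stronger than the conjecture and may be false: the paper points out that Conjecture \ref{ConjPotentials} could hold even when $X(\Sigma_\fraks)$ has no small resolution and $W_\fraks$ is not the literal disk potential (e.g.\ if it instead encodes a low-area or bulk-deformed count), so reducing to the identification of $W_\fraks$ with the genuine disk count is potentially lossy. Second, even granting $W_\fraks = W_{L_\fraks}$, the invariance statement must be phrased up to automorphisms of the torus $T_\fraks$ (equivalently, up to $GL(k(n-k),\ZZ)$ change of basis on $H_1$), because a Hamiltonian isotopy need not carry the canonical basis of cycles of $L_\fraks$ to the canonical basis of $L_{\fraks'}$; this is the reason for the $\sim$ in Conjecture \ref{ConjPotentials}, and your phrase ``written in the canonical basis'' elides it.
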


\ytableausetup{boxsize=0.3em}
\begin{figure}[H]
  \centering
    \begin{subfigure}[b]{0.3\textwidth}
        \includegraphics[width=\textwidth]{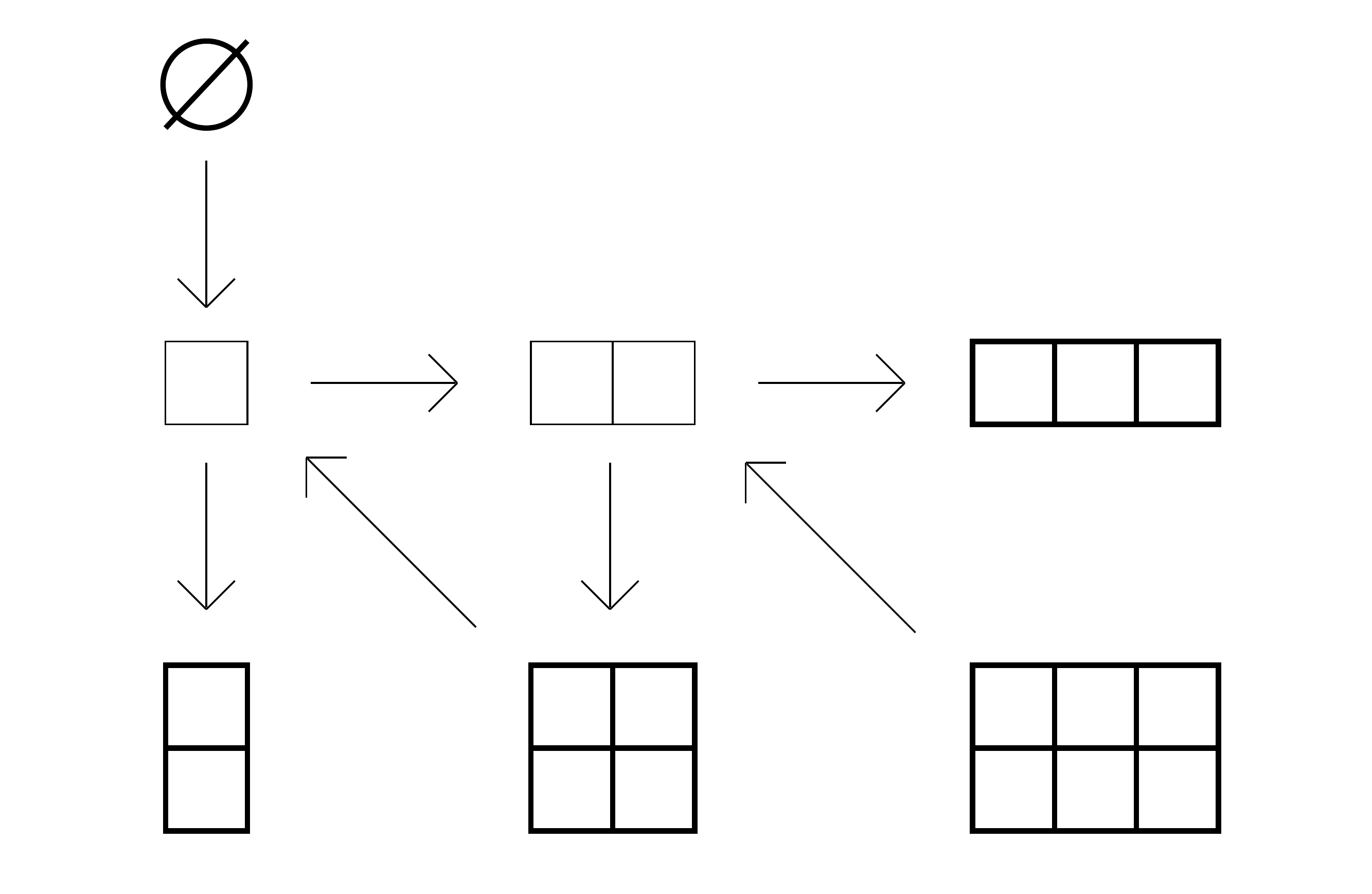}
        \caption{$Q_0$, mutation at $v=\ydiagram{1,0}$}
    \end{subfigure}
    \begin{subfigure}[b]{0.3\textwidth}
        \includegraphics[width=\textwidth]{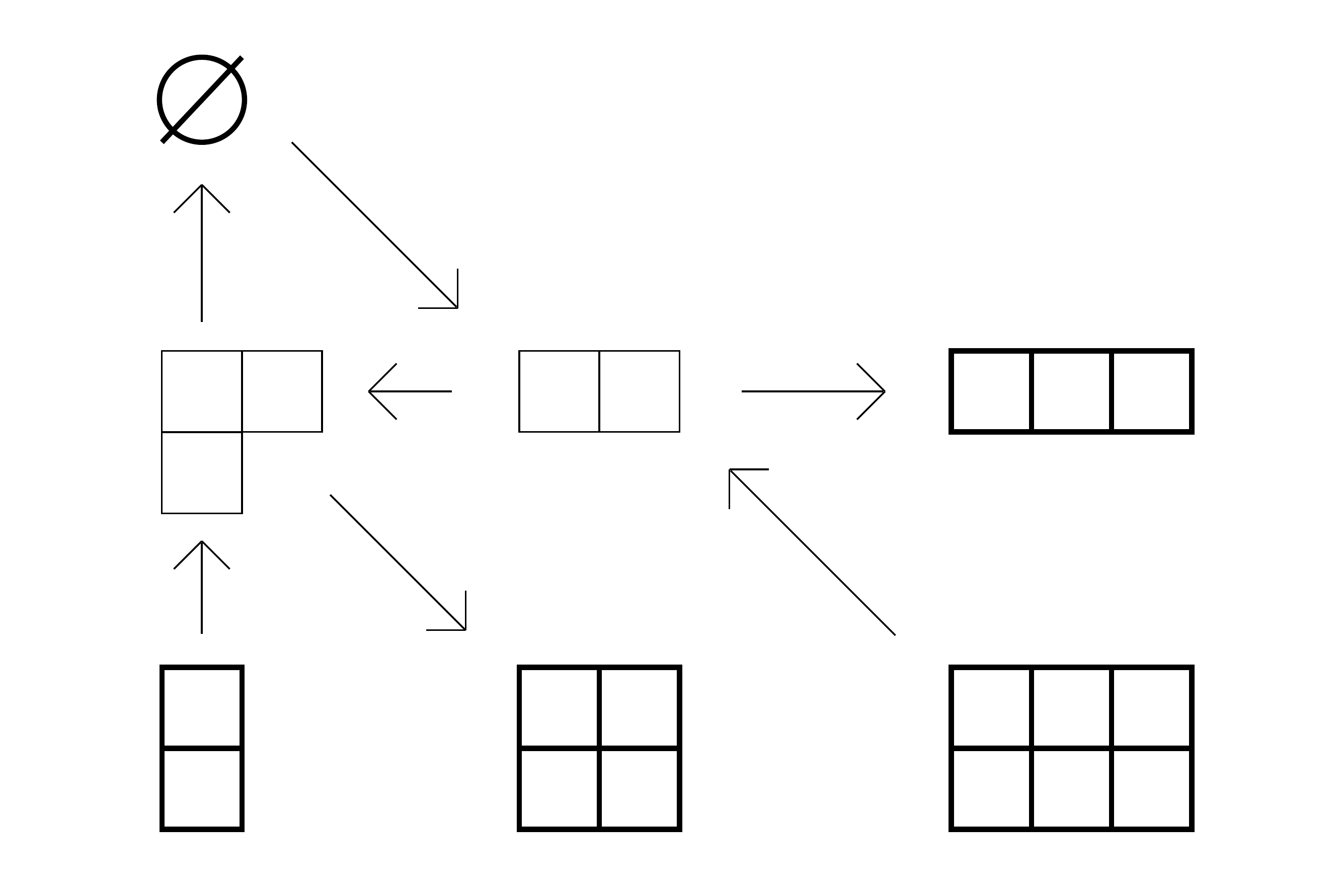}
        \caption{$Q_1$, mutation at $v=\ydiagram{2,0}$}
    \end{subfigure}
    \begin{subfigure}[b]{0.3\textwidth}
        \includegraphics[width=\textwidth]{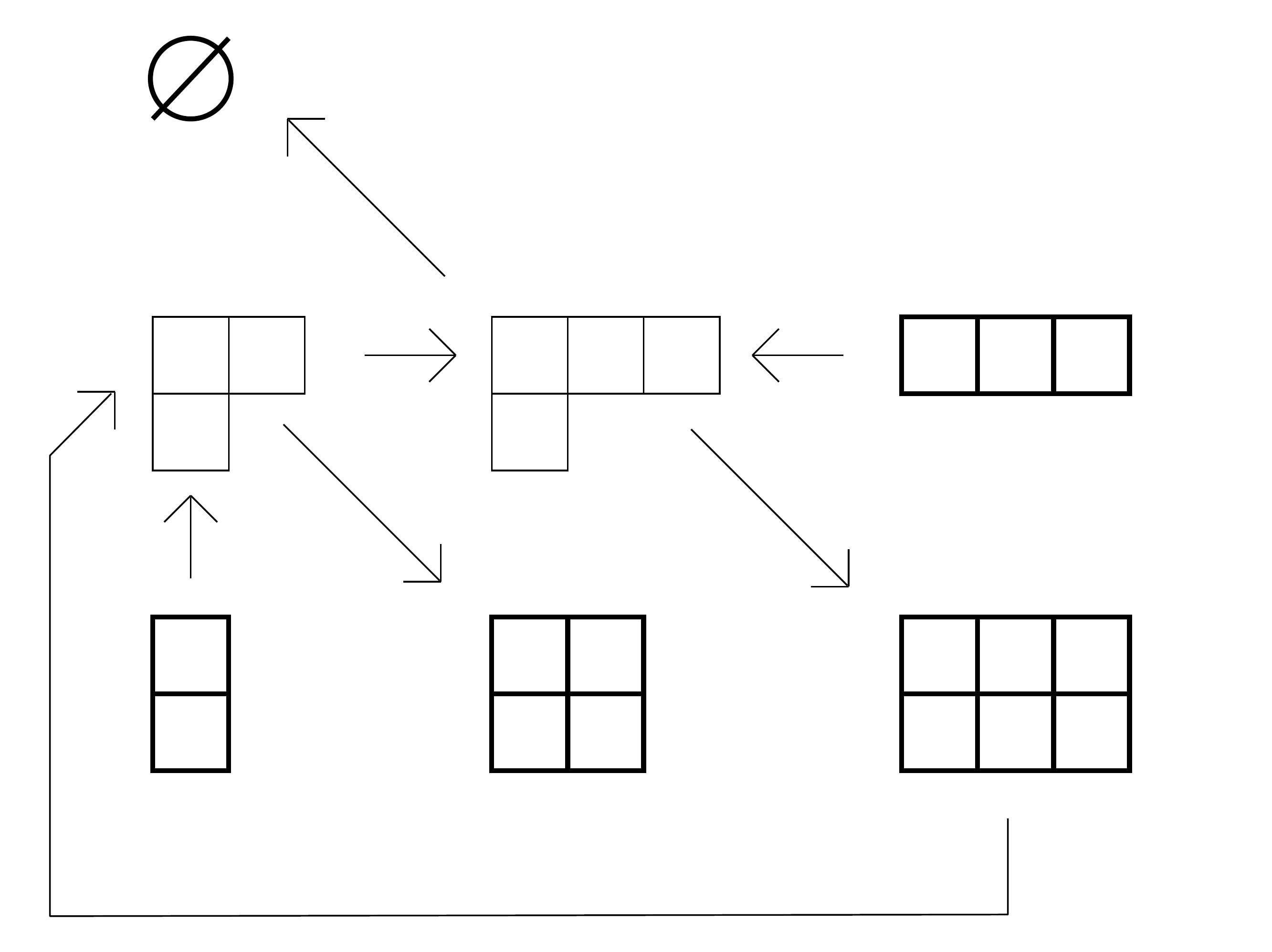}
        \caption{$Q_2$, mutation at $v=\ydiagram{2,1}$}
    \end{subfigure}
    
    \begin{subfigure}[b]{0.3\textwidth}
        \includegraphics[width=\textwidth]{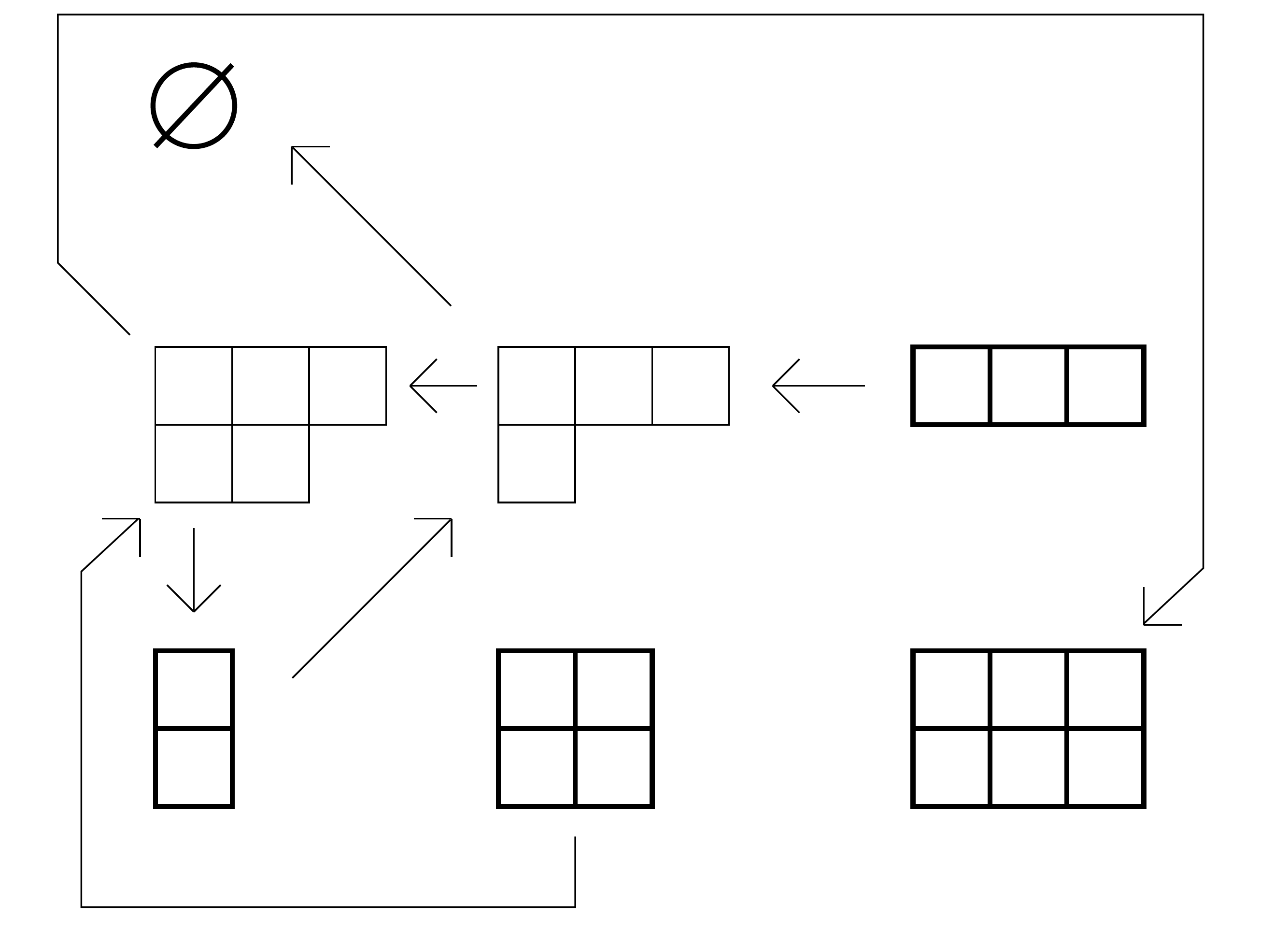}
        \caption{$Q_3$, mutation at $v=\ydiagram{3,1}$}
    \end{subfigure}
    \begin{subfigure}[b]{0.3\textwidth}
        \includegraphics[width=\textwidth]{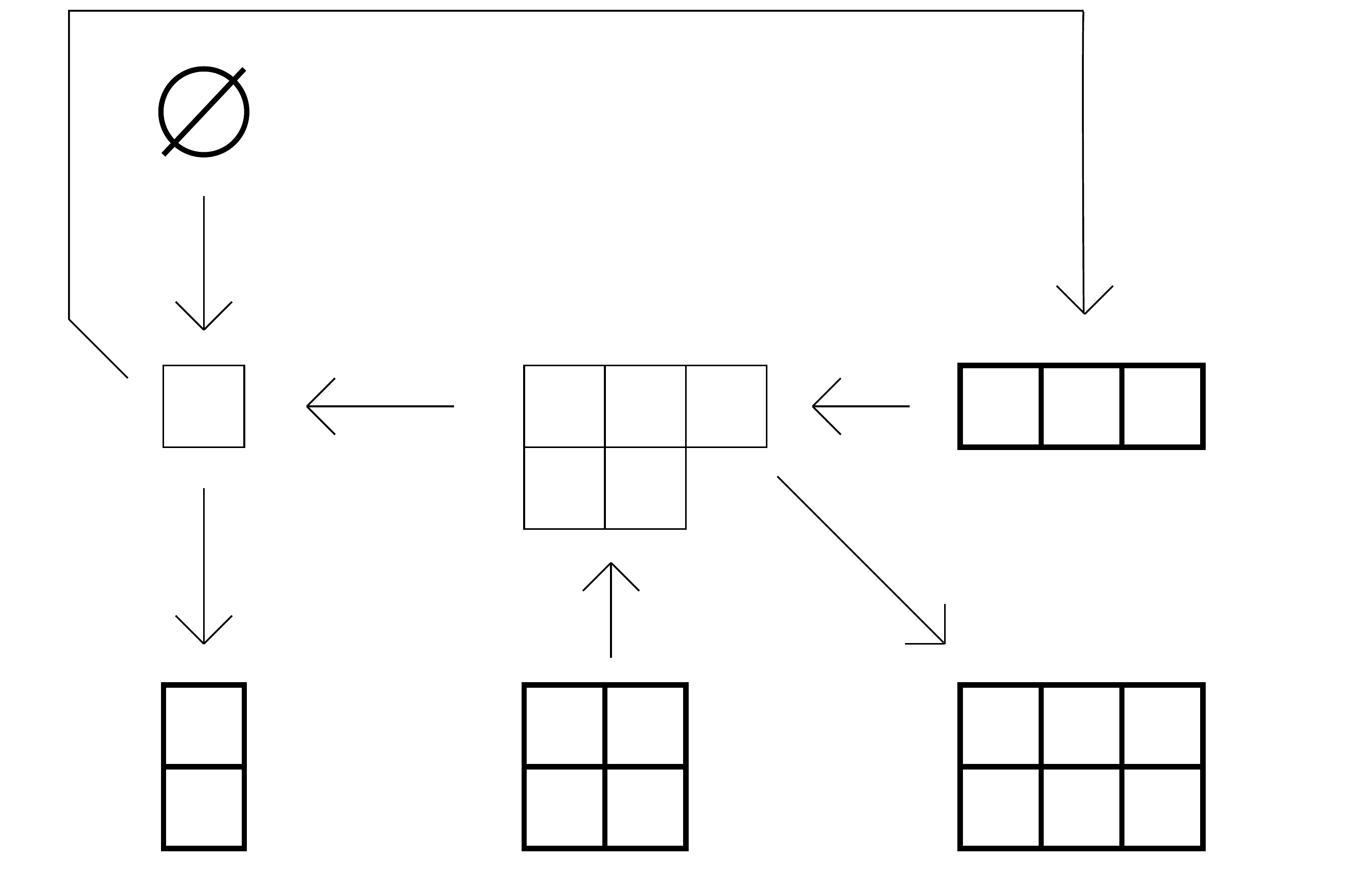}
        \caption{$Q_4$, mutation at $v=\ydiagram{3,2}$}
    \end{subfigure}
    \begin{subfigure}[b]{0.3\textwidth}
        \includegraphics[width=\textwidth]{Q0}
        \caption{$Q_5=Q_0$}
    \end{subfigure}

    \caption{A Pl\"{u}cker sequence of type $(2,5)$ and length five. The labeling variables $x_d$
    on the nodes are replaced  by $d$ for notational convenience.}
    \label{FigPluckerSeq25}
\end{figure}

\begin{example}\label{ExPluckerSeq25}
Let $k=2$ and $n=5$. Figure \ref{FigPluckerSeq25} represents a Pl\"{u}cker sequence
$$\fraks : (Q_0,W_0) \to (Q_1,W_1) \to (Q_2,W_2) \to (Q_3,W_3) \to (Q_4,W_4) \to (Q_5,W_5)$$
of type $(2,5)$ and length five, where the final step and the initial
one coincide. At each step $(Q_i,W_i)$, the graph
$Q_i$ is a quiver whose nodes are labeled by Pl\"{u}cker coordinates $x_d$ for
some collection of Young diagrams $d\subseteq 2\times 3$, and $W_i$ is a Laurent
polynomial of the variables $x_d$. A step $(Q_i,W_i)\to(Q_{i+1},W_{i+1})$
in the sequence consists in performing a quiver mutation at a mutable node $v$
of $Q_i$ as described in Section \ref{SecConstruction}. This procedure changes
the label $l(v)$ of the node $v$ in $Q_i$ to a new label $l'(v)$ of the same node
in $Q_{i+1}$. The two labels are related by
the following exchange relation: $l(v)l(v')$ is a sum of two terms, obtained by taking the product
of labels $l(w)$ from incoming/outgoing nodes $w$ adjacent to $v$ respectively.
The rational function $W_{i+1}$ is obtained from $W_i$ by using the previous
relation to replace the label $l(v)$ with $l(v')$, and becomes Laurent
modulo Pl\"{u}cker relations, i.e. when interpreted as element of the function
field $\operatorname{Frac}(\mathcal{A}_{2,5})=\CC(U_{2,5})=\CC(\Gr^\vee(2,5))$
of the dual Grassmannian $\Gr^\vee(2,5)=\Gr(3,5)$. The intermediate steps of
Construction \ref{ConTori} produce Lagrangian tori $L_0,L_1,L_2,L_3,L_4\subset\Gr(2,5)$. In this case,
all the tori are monotone and the Laurent polynomials $W_i$ for $0\leq i\leq 4$ 
match their disk potentials $W_{L_i}$:

\ytableausetup{boxsize=0.3em}
\begin{eqnarray*}
&& W_0 =
x_{\ydiagram{1,0}}
+ \frac{x_{\ydiagram{1,1}}}{x_{\ydiagram{1,0}}}
+ \frac{x_{\ydiagram{2,2}}x_\emptyset}{x_{\ydiagram{1,0}}x_{\ydiagram{2,0}}}
+ \frac{x_{\ydiagram{3,3}}x_\emptyset}{x_{\ydiagram{2,0}}x_{\ydiagram{3,0}}}
+ \frac{x_{\ydiagram{2,0}}}{x_{\ydiagram{3,3}}}
+ \frac{x_{\ydiagram{2,0}}}{x_{\ydiagram{1,0}}}
+ \frac{x_{\ydiagram{3,0}}}{x_{\ydiagram{2,0}}}
+ \frac{x_{\ydiagram{2,2}}x_\emptyset}{x_{\ydiagram{1,0}}x_{\ydiagram{1,1}}}
+ \frac{x_{\ydiagram{3,3}}x_{\ydiagram{1,0}}}{x_{\ydiagram{2,0}}x_{\ydiagram{2,2}}}
\quad ;\\
&& W_1 =
\frac{x_{\emptyset}x_{\ydiagram{2,2}}}{x_{\ydiagram{2,1}}}
+ \frac{x_{\ydiagram{2,0}}x_{\ydiagram{1,1}}}{x_{\ydiagram{2,1}}}
+ \frac{x_{\ydiagram{3,3}}x_{\emptyset}}{x_{\ydiagram{2,0}}x_{\ydiagram{2,1}}}
+ \frac{x_{\ydiagram{3,3}}x_{\ydiagram{1,1}}}{x_{\ydiagram{2,2}}x_{\ydiagram{2,1}}}
+ \frac{x_{\ydiagram{2,1}}}{x_{\ydiagram{2,0}}}
+ \frac{x_{\ydiagram{2,1}}}{x_{\ydiagram{1,1}}}
+ \frac{x_{\ydiagram{3,3}}x_{\emptyset}}{x_{\ydiagram{2,0}}x_{\ydiagram{3,0}}}
+ \frac{x_{\ydiagram{2,0}}}{x_{\ydiagram{3,3}}}
+ \frac{x_{\ydiagram{3,0}}}{x_{\ydiagram{2,0}}}
\quad ;\\
&& W_2 =
\frac{x_\emptyset x_{\ydiagram{3,3}}x_{\ydiagram{1,1}}}{x_{\ydiagram{3,1}}x_{\ydiagram{2,1}}}
+ \frac{x_{\emptyset}}{x_{\ydiagram{3,1}}}
+ \frac{x_{\ydiagram{3,0}}x_{\ydiagram{1,1}}}{x_{\ydiagram{3,1}}}
+ \frac{x_{\ydiagram{2,1}}x_{\ydiagram{3,0}}}{x_{\ydiagram{3,1}}x_{\ydiagram{3,3}}}
+ \frac{x_{\ydiagram{3,1}}}{x_{\ydiagram{2,1}}}
+ \frac{x_{\ydiagram{3,1}}}{x_{\ydiagram{3,0}}}
+ \frac{x_{\emptyset}x_{\ydiagram{2,2}}}{x_{\ydiagram{2,1}}}
+ \frac{x_{\ydiagram{3,3}}x_{\ydiagram{1,1}}}{x_{\ydiagram{2,2}}x_{\ydiagram{2,1}}}
+ \frac{x_{\ydiagram{2,1}}}{x_{\ydiagram{1,1}}}
\quad ;\\
&& W_3 = 
\frac{x_{\emptyset}}{x_{\ydiagram{3,1}}}
+ \frac{x_{\ydiagram{3,0}}x_{\ydiagram{1,1}}}{x_{\ydiagram{3,1}}}
+ \frac{x_{\ydiagram{3,1}}}{x_{\ydiagram{3,0}}}
+ \frac{x_{\ydiagram{3,2}}x_{\emptyset}}{x_{\ydiagram{3,1}}}
+ \frac{x_{\ydiagram{3,2}}}{x_{\ydiagram{2,2}}}
+ \frac{x_{\ydiagram{1,1}}x_{\ydiagram{3,0}}}{x_{\ydiagram{3,2}}x_{\ydiagram{3,1}}}
+ \frac{x_{\ydiagram{3,3}}}{x_{\ydiagram{3,2}}}
+ \frac{x_{\ydiagram{2,2}}x_{\ydiagram{3,0}}}{x_{\ydiagram{3,2}}x_{\ydiagram{3,3}}}
+ \frac{x_{\ydiagram{3,1}}x_{\ydiagram{2,2}}}{x_{\ydiagram{3,2}}x_{\ydiagram{1,1}}}
\quad ;\\
&& W_4 =
\frac{x_{\ydiagram{3,2}}}{x_{\ydiagram{2,2}}}
+ \frac{x_{\ydiagram{3,3}}}{x_{\ydiagram{3,2}}}
+ \frac{x_{\ydiagram{2,2}}x_{\ydiagram{3,0}}}{x_{\ydiagram{3,2}}x_{\ydiagram{3,3}}}
+ \frac{x_{\ydiagram{1,0}}}{x_{\ydiagram{3,2}}}
+ x_{\ydiagram{1,0}}
+ \frac{x_{\emptyset}x_{\ydiagram{3,2}}}{x_{\ydiagram{1,0}}x_{\ydiagram{3,0}}}
+ \frac{x_{\emptyset}x_{\ydiagram{2,2}}}{x_{\ydiagram{1,0}}x_{\ydiagram{1,1}}}
+ \frac{x_{\ydiagram{1,1}}}{x_{\ydiagram{1,0}}}
+ \frac{x_{\ydiagram{3,0}}x_{\ydiagram{2,2}}}{x_{\ydiagram{1,0}}x_{\ydiagram{3,2}}}
\quad .
\end{eqnarray*}
\end{example}

The equality $W_\fraks = W_{L_\fraks}$ is an application of a general result of
Nishinou-Nohara-Ueda \cite{NNU} on the behavior of disk potentials under toric
degeneration, which also implies monotonicity of $L_\fraks$;
see Proposition \ref{PropSmallMonotone}. This result gives a sufficient condition
for the equality $W_\fraks = W_{L_\fraks}$, which is the existence of a small toric
resolution for the singular toric variety $X(\Sigma_\fraks)$; see Definition \ref{DefSmallResolution}. Due
to the combinatorial nature of toric varieties, for any given Pl\"{u}cker sequence $\fraks$
one can check this condition in finitely many steps. In Section \ref{SecApplications} we use this to
describe a sample application in the smallest example not accessible by previous techniques.

\begin{theorem}(see Theorem \ref{ThmExoticTori})
The Grassmannian $\Gr(3,6)$ contains at least $6$ monotone Lagrangian tori
that are non-displaceable and pairwise inequivalent under Hamiltonian isotopy.
\end{theorem}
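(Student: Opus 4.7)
The plan is to exhibit six Pl\"ucker sequences $\fraks_1,\ldots,\fraks_6$ of type $(3,6)$ for which the singular toric varieties $X(\Sigma_{\fraks_i})$ admit small toric resolutions. By Proposition \ref{PropSmallMonotone} (which packages \cite{NNU}), each associated Lagrangian torus $L_{\fraks_i}\subset\Gr(3,6)$ is then monotone and its disk potential coincides with the explicit Laurent polynomial $W_{\fraks_i}$ produced by Construction \ref{ConTori}. Good candidates can be generated by the random walk implemented in \cite{CaCode} and the small-resolution condition verified by a finite combinatorial inspection of each fan $\Sigma_{\fraks_i}$.

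For non-displaceability of a pair $(L_{\fraks_i},\nabla_i)$ consisting of torus and $\CC^*$-valued local system, I would exhibit a critical point of $W_{\fraks_i}$ with monodromy $\nabla_i$; by \cite{CO,AuT,Sh} such a critical point obstructs Hamiltonian displacement. For pairwise inequivalence under Hamiltonian isotopy I would use the spectral decomposition of $\Fuk(\Gr(3,6))\otimes\CC$ along the generalized eigenvalues of quantum multiplication by $c_1(\Gr(3,6))$ on $\QH^*(\Gr(3,6))$. A Hamiltonian isotopy carries the pair $(L,\nabla)$ to another such pair and preserves the eigensummand of $\Fuk$ into which it contributes; in the monotone setting, that eigenvalue is precisely the critical value of the disk potential with local system. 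For $\Gr(k,n)$ the eigenvalues of $c_1\,*$ are known to be values of the $k$-variable Schur polynomials at $n$-th roots of unity, which is the input flagged in the abstract; these are explicit in the case $(k,n)=(3,6)$. The task is therefore to select, for each $i$, a local system $\nabla_i$ so that $(L_{\fraks_i},\nabla_i)$ concentrates in a single summand, and so that the six resulting summands are pairwise distinct.

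The main obstacle is the combinatorial complexity of $\Gr(3,6)$: the cluster algebra $\mathcal{A}_{3,6}$ has a large exchange graph, so locating six Pl\"ucker sequences that simultaneously satisfy the small-resolution condition and whose critical points match six distinct eigenvalues of $c_1\,*$ requires either exploiting the cyclic symmetry of the Marsh-Rietsch mirror together with a direct search via \cite{CaCode}, or an explicit identification of the sequences with specific algebraic sectors of the decomposition. The bookkeeping of local systems and the matching of critical values to eigenvalues of $c_1\,*$ on $\QH^*(\Gr(3,6))$ is where the real content of the theorem lives; once this is done, non-displaceability and pairwise non-isotopy follow uniformly from the two general principles above.
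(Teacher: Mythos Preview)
Your overall strategy---find six Pl\"ucker sequences whose toric degenerations admit small resolutions, invoke Proposition \ref{PropSmallMonotone} to get monotone tori with known disk potentials, then use critical points for non-displaceability---matches the paper's. The paper in fact runs through $34$ sequences, verifies small resolutions for $32$ of them with \cite{CaCode}, and for non-displaceability uses a single critical point (the totally positive one of Rietsch \cite{Ri06}) that lies in \emph{every} cluster chart $T_\fraks$, so all $32$ tori are handled at once.

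The genuine gap is in your argument for pairwise Hamiltonian inequivalence. Placing $(L_{\fraks_i},\nabla_i)$ in six pairwise distinct eigensummands of $\Fuk(\Gr(3,6))$ does \emph{not} distinguish the underlying tori. A Hamiltonian isotopy $\phi$ with $\phi(L_{\fraks_1})=L_{\fraks_2}$ sends $(L_{\fraks_1},\nabla_1)$ to $(L_{\fraks_2},\phi_*\nabla_1)$, and nothing forces $\phi_*\nabla_1$ to equal your chosen $\nabla_2$. Indeed, every one of these tori supports a nonzero object in the summand containing the totally positive critical value, so your criterion cannot separate any of them. What \emph{is} a Hamiltonian invariant of the bare torus is the full set of eigensummands in which it supports nonzero objects---equivalently, the set of critical points of $W_\fraks$, or at least their number.

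This is precisely what the paper does (Lemma \ref{LemmaHamInvariance}): since $W_{L_\fraks}=W_\fraks$ is a Hamiltonian invariant of $L_\fraks$ up to a $GL(9,\ZZ)$ change of variables, both the $f$-vector $\mathbf{f}(L_\fraks)$ of the Newton polytope $P_\fraks$ and the number $\mathbf{wt}(L_\fraks)$ of critical points of $W_\fraks$ are invariants. Among the $32$ tori with small resolutions, six rows (1, 3, 7, 8, 15, 29 in the paper's table) have pairwise distinct pairs $(\mathbf{f},\mathbf{wt})$; the weight alone takes only two values ($6$ and $18$), so the $f$-vector is essential. Your proposal does not mention the Newton polytope at all, and without it (or something equivalent) the argument cannot be completed.
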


We call these tori exotic, because only one monotone torus was previously known:
the Gelfand-Cetlin torus. The new examples are of the form $L_\fraks$ for some Pl\"{u}cker sequence
$\fraks$, and are distinguished by a combination of two invariants:
the number of critical points of their disk potential $W_{L_\fraks}$ and the $f$-vector
of its Newton polytope. This strategy applies without modification to arbitrary
Grassmannians. If Conjecture \ref{ConjPotentials} holds, the same arguments of
Theorem \ref{ThmExoticTori} imply that the tori $L_\fraks\subset\Gr(k,n)$ are
always nondisplaceable, and generally not Hamiltonian isotopic. Note that Conjecture
\ref{ConjPotentials} may still hold when the toric variety $X(\Sigma_\fraks)$
has no small toric resolution, and the result of Nishinou-Nohara-Ueda \cite{NNU}
does not apply. In this case, positivity of the coefficients of $W_\fraks$
suggests an enumerative interpretation in terms of counts of
$J$-holomorphic disks with boundary on $L_\fraks$. We plan to explore this
in a separate work, simply pointing out here a possible interpretation in terms
of low-area Floer theory in the sense of Tonkonog-Vianna \cite{TV}.
For $k=1$ one has projective spaces $\Gr(1,n)=\PP^{n-1}$, and there is
only one Pl\"{u}cker sequence $\fraks$ of lenght $0$; in this case $W_\fraks$
is the disk potential of the Clifford torus. In particular, Construction \ref{ConTori} does
not imply the existence of exotic tori in $\PP^2$ established by Vianna \cite{Vi1, Vi2}.
For $k=2$ Construction \ref{ConTori} recovers a different one studied by
Nohara-Ueda \cite{NU20}, who introduced a collection of Lagrangian tori in $\Gr(2,n)$
corresponding to triangulations of an $n$-gon; the relation is explained in
Lemma \ref{LemmaK2}, and it implies that Conjecture \ref{ConjPotentials} holds when $k=2$.

\subsection{Probing the spectral decomposition}

Since for $k=2$ all tori $L_\fraks\subset\Gr(2,n)$ are monotone, it is natural
to think of them as objects of the monotone Fukaya category.
As described by Sheridan \cite{Sh}, the Fukaya category of a monotone symplectic
manifold $X$ has a spectral decomposition 
$$\Fuk(X)=\bigoplus_{\lambda}\Fuk_\lambda(X) \quad .$$
The summands are $A_\infty$-categories indexed by the eigenvalues $\lambda$ of
the operator $c_1\star$ of multiplication by the first Chern class acting on the small
quantum cohomology. The objects of the $\lambda$-summand
are monotone Lagrangians $L_\xi$ equipped with a rank one local system $\xi$
such that
$$m^0(L_\xi)=\sum_\beta c_\beta(L)\hol_\xi(\partial\beta)=\lambda \quad .$$
Definition \ref{DefLocalSystems} introduces some natural local systems
supported on the Lagrangian tori $L_\fraks\subset\Gr(k,n)$, that are
controlled by the values of $k$-variables Schur polynomials at certain roots of unity.
These local systems generalize the ones studied in \cite{Ca} for the
Gelfand-Cetlin torus, that were controlled by Schur polynomials
corresponding rectangular Young diagrams. When $k=2$, we show that the corresponding
objects split-generate the derived Fukaya category $\D\Fuk(\Gr(2,n))$ in some cases,
notably including examples where the Gelfand-Cetlin torus alone fails to do so.

\begin{theorem}(see Theorem \ref{ThmDyadicHMS})
If $n=2^t+1$ for some $t\in\NN^+$, the derived Fukaya category $\D\Fuk(\Gr(2,2^t+1))$
is split-generated by objects supported on a single Pl\"{u}cker torus.
\end{theorem}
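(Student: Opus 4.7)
The plan is to verify the split-generation criterion of Sheridan \cite{Sh} for a judiciously chosen Pl\"ucker torus $L_{\fraks} \subset \Gr(2, 2^t+1)$. The criterion says that a collection of monotone Lagrangian branes $(L, \xi_i)$ with $m^0(L, \xi_i) = \lambda_i$ split-generates $\D\Fuk(X)$ provided the values $\lambda_i$ exhaust the spectrum of $c_1 \star$ on $\QH^*(X)$ and the open-closed map hits the identity of each spectral summand — a condition that, for a monotone torus, is automatic at non-degenerate critical points of the disk potential. Thus the goal reduces to exhibiting a single $\fraks$ such that $W_{\fraks}$ is Morse and its critical values exhaust $\Spec(c_1\star)$.

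The first step is the choice of Pl\"ucker sequence. Using Lemma \ref{LemmaK2} to identify the tori $L_{\fraks} \subset \Gr(2, n)$ with the Nohara-Ueda tori indexed by triangulations of an $n$-gon, I would take a triangulation adapted to the dyadic structure of $n - 1 = 2^t$ — most naturally a recursive, balanced bisection — and let $\fraks$ be a Pl\"ucker sequence realizing it. The Nohara-Ueda formula then provides $W_{\fraks}$ as an explicit Laurent polynomial whose monomials are indexed by the triangles.

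The second step matches the critical locus with the eigenvalue spectrum. By the Siebert-Tian presentation of $\QH^*(\Gr(2, n))$, the eigenvalues of $c_1\star$ are the sums $\zeta + \zeta'$ over unordered pairs of distinct $n$-th roots of unity, giving $\binom{n}{2}$ eigenvalues counted with multiplicity. On the torus side, Definition \ref{DefLocalSystems} parameterizes local systems via evaluations of 2-variable Schur polynomials at pairs of $n$-th roots of unity, and I expect a direct computation to show that substituting these values into the equations for the critical locus of $W_{\fraks}$ produces a critical point with critical value $\zeta + \zeta'$.

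The hardest part of the proof will be verifying that this assignment $\{\zeta, \zeta'\} \mapsto \text{critical point}$ is bijective and that the resulting critical points are non-degenerate. For general $n$, accidental coincidences $\zeta_1 + \zeta_2 = \zeta_3 + \zeta_4$ among distinct unordered pairs of $n$-th roots of unity would collapse critical points or force degeneracy, and the hypothesis $n = 2^t + 1$ should be exactly what rules such coincidences out: a vanishing four-term sum of $n$-th roots of unity descends to a linear dependence among cyclotomic integers, whose existence is controlled by the prime factorization of $n$, and $n = 2^t + 1$ forces these primes to be sufficiently special (Fermat-like) that no nontrivial relation survives. Once bijectivity is in hand, a Hessian computation at each of the $\binom{n}{2}$ critical points promotes it to the Morse property, and Sheridan's criterion then concludes.
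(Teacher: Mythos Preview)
Your overall architecture is right---choose a dyadic triangulation, use the Nohara--Ueda/Lemma~\ref{LemmaK2} identification so that $W_\fraks$ is the disk potential, and apply Sheridan's criterion---but you have misplaced the role of the hypothesis $n=2^t+1$, and in doing so you have missed the step that actually requires work.

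The issue you flag as ``hardest'' (ruling out coincidences $\zeta_1+\zeta_2=\zeta_3+\zeta_4$ via cyclotomic/Fermat-prime considerations, and then checking the Hessian) is not where the content lies. Simplicity of the spectrum of $c_1\star$ on $\QH(\Gr(2,n))$ needs only that $n$ be odd: a direct norm calculation (Lemma~\ref{LemmaSimpleDyadicSpectrum}) shows that the $\binom{n}{2}$ values $n(\zeta_1+\zeta_2)$ are pairwise distinct, with no four-term cyclotomic analysis required. Once each eigenspace is one-dimensional, Sheridan's criterion \cite[Corollary~2.19]{Sh} says that \emph{any} nonzero object split-generates its summand; you do not need Morse non-degeneracy or a Hessian computation, only $HF((L_\fraks)_{\xi_I},(L_\fraks)_{\xi_I})\neq 0$, which holds at every critical point.

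The genuine obstruction you do not address is whether the local systems $\xi_I$ of Definition~\ref{DefLocalSystems} are defined at all, i.e.\ whether the Schur values $S_d(I)$ are nonzero for every diagram $d$ labeling $Q_\fraks$. Equivalently: does every critical point $[M_{I_{a,b}}]$ of the global Landau--Ginzburg potential $W$ land in the chosen cluster chart $T_\fraks$? This is exactly what can fail for a generic triangulation, and it is here that both the dyadic choice and the hypothesis $n=2^t+1$ are used. The relevant computation (Lemma~\ref{LemmaForbiddenEdges}) is that for $d^|=\{s,t\}$ one has $S_d(I_{a,b})=0\iff n\mid \tfrac{b-a}{2}(t-s)$. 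In the dyadic triangulation every edge length $t-s$ is a power of $2$; since $n=2^t+1$ is odd, this forces $n\mid \tfrac{b-a}{2}$, which is impossible because $\tfrac{b-a}{2}<n$. That is the whole point of the hypothesis: it makes $n$ coprime to every edge length of the chosen triangulation, so no Schur value vanishes and every $\xi_I$ is well-defined. Your proposal should replace the four-term-sum / Hessian discussion with this non-vanishing argument.
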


The Lagrangian torus in the statement is associated to a special special
triangulation of the $n$-gon, that we call dyadic. In fact, Section \ref{SecApplications}
contains a criterion to prove split-generation of $\D\Fuk(\Gr(2,n))$ by
objects supported on any number of tori $L_\fraks\subset\Gr(2,n)$, whenever $n$ is odd.
The criterion is based on a construction of triangulations of the $n$-gon
whose sides lengths avoid the prime numbers appearing in the factorization of $n$.

\begin{theorem}(see Theorem \ref{ThmPavoidingHMS})
Let $n>2$ be odd, and consider its prime factorization $n=p_1^{e_1}\cdots p_l^{e_l}$.
If for all $1\leq i\leq l$ there exists a triangulation $\Gamma_i$ of $[n]$
that is $p_i$-avoiding, then $\D\Fuk(\Gr(2,n))$ is split generated by objects
supported on $l$ Pl\"{u}cker tori.
\end{theorem}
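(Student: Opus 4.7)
The plan is to apply Sheridan's split-generation criterion summand-by-summand: for each eigenvalue $\lambda$ of quantum multiplication by $c_1$ on $\QH^\ast(\Gr(2,n);\CC)$ I exhibit a nonzero object of $\Fuk_\lambda$ supported on one of the tori $L_{\Gamma_i}$. Since $\QH^\ast(\Gr(2,n))$ is semisimple, producing such an object for each $\lambda$ yields split-generation of $\D\Fuk(\Gr(2,n))$ by the tori $L_{\Gamma_1},\ldots,L_{\Gamma_l}$.

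First I would identify the spectrum of $c_1\star$ with the critical values of the Marsh-Rietsch potential. Rietsch's description of the critical locus of $W\in\mathcal{A}_{2,n}$ puts the critical points in bijection with unordered pairs $\{a,b\}$ of distinct residues mod $n$, with critical value $\lambda_{ab}=n(\zeta^a+\zeta^b)$ where $\zeta=e^{2\pi\sqrt{-1}/n}$. On the critical point above $\lambda_{ab}$ the Pl\"ucker coordinate indexed by $\{i,j\}\subset[n]$ equals, up to an overall scalar, $\zeta^{ai+bj}(1-\zeta^{(a-b)(j-i)})$, which vanishes exactly when $n\mid(a-b)(j-i)$. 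For $k=2$, Lemma \ref{LemmaK2} identifies the cluster chart $T_\Gamma$ of a triangulation $\Gamma$ with the open locus where the Pl\"ucker coordinates indexed by the sides of $\Gamma$ are all nonzero. Combining these two facts gives the clean criterion: the critical point $\{a,b\}$ lies in $T_\Gamma$ if and only if $n\nmid(a-b)\ell$ for every side length $\ell$ of $\Gamma$. Since $n$ is odd the boundary edges (length $1$) impose no constraint, so only the $n-3$ diagonal lengths matter.

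The hypothesis of the theorem is tailored to this criterion. Fix an unordered pair $\{a,b\}$ of distinct residues mod $n$; since $n\nmid(a-b)$, at least one prime factor $p_i$ of $n$ must satisfy $v_{p_i}(a-b)<v_{p_i}(n)$. Pick such a $p_i$; because $\Gamma_i$ is $p_i$-avoiding, every diagonal length $\ell$ of $\Gamma_i$ is coprime to $p_i$, hence $v_{p_i}((a-b)\ell)=v_{p_i}(a-b)<v_{p_i}(n)$, which forces $n\nmid(a-b)\ell$. Therefore the critical point $\{a,b\}$ lies in $T_{\Gamma_i}$, and pulling it back along the monomial identification of the dual torus of $L_{\Gamma_i}$ with $T_{\Gamma_i}$ produces a local system $\xi$ with $m^0(L_{\Gamma_i},\xi)=W_{\Gamma_i}(\xi)=\lambda_{ab}$, a nonzero object of $\Fuk_{\lambda_{ab}}$ supported on $L_{\Gamma_i}$. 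As $\{a,b\}$ ranges over all pairs, the $l$ tori collectively realize every eigenvalue of $c_1\star$, and Sheridan's criterion delivers the claim.

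The principal technical point, where the bulk of the work lies, is the second paragraph: verifying that the critical points of $W$ have the stated explicit vanishing pattern of Pl\"ucker coordinates, and that for $k=2$ the cluster chart $T_\Gamma$ of Construction \ref{ConTori} agrees with the open locus of nonvanishing Pl\"ucker coordinates on the sides of $\Gamma$. Both ingredients are essentially present in the literature on the $\Gr(2,n)$ cluster algebra and the Marsh-Rietsch mirror, but need to be reconciled with the conventions of Section \ref{SecConstruction}. Once that dictionary is in place the elementary $p$-adic valuation computation above together with Sheridan's criterion finishes the proof.
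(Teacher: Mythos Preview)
Your approach is essentially the same as the paper's: cover the critical locus of $W$ by the cluster charts $T_{\Gamma_1},\ldots,T_{\Gamma_l}$ using the divisibility criterion of Lemma~\ref{LemmaForbiddenEdges}, then invoke Sheridan's generation result. Your $p$-adic valuation formulation of the covering argument is exactly the contrapositive of the paper's argument, and your conventions (pairs of residues mod $n$ with $\zeta=e^{2\pi i/n}$) differ from the paper's (pairs of odd integers in $(0,2n)$ indexing roots of $\zeta^n=-1$) only by a harmless reparametrization.

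There is one imprecision worth flagging. You write ``Since $\QH^\ast(\Gr(2,n))$ is semisimple, producing such an object for each $\lambda$ yields split-generation.'' Semisimplicity alone is not what Sheridan's Corollary~2.19 requires: you need each generalized eigenspace $\QH_\lambda$ to be \emph{one-dimensional}, i.e.\ the spectrum of $c_1\star$ to be simple. This is exactly where the hypothesis $n$ odd is used (and is the content of the paper's Lemma~\ref{LemmaSimpleDyadicSpectrum}); for even $n$ some eigenvalues coincide and a single nonzero object need not split-generate its summand. Your remark that ``since $n$ is odd the boundary edges impose no constraint'' misplaces the role of oddness --- boundary edges have $t-s\in\{1,n-1\}$ and are harmless for any $n$ --- whereas the genuine use of $n$ odd is in guaranteeing simple spectrum. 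Once you replace ``semisimple'' by ``has simple spectrum (Lemma~\ref{LemmaSimpleDyadicSpectrum})'' the argument is complete and matches the paper's.
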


\begin{remark}
A standard consequence of split-generation is that any monotone Lagrangian
supporting nonzero objects of the Fukaya category must intersect the generator.
\end{remark}

These results seem to suggest that objects supported on the tori $L_\fraks$
could split-generate $\D\Fuk(\Gr(k,n))$ in general, although split-generation
over $\CC$ has a subtle relation with the location of the critical points
of $W\in\mathcal{A}_{k,n}$ relative to the torus charts $T_\fraks\subset U_{k,n}$.
For example, split-generation over $\CC$ fails for $\Gr(2,4)$, where
$W$ has two critical points in a complex codimension $2$ locus of $U_{k,n}$ which is
not covered by cluster charts. We plan to investigate in a separate work how the
situation changes when considering bulk-deformations in the sense of Fukaya-Oh-Ohta-Ono \cite{FOOO}.

\subsection{Mirror symmetry and abundance of Lagrangian tori}

This article can be thought of as part of a broader program aimed
at investigating the abundance of Lagrangian tori in Fano manifolds $X$ with
an anti-canonical divisor $D\subset X$ whose complement $U=X\setminus D$
is a cluster variety. This class includes many homogeneous varieties $X=G/P$ with
$P\subset G$ parabolic subgroup of a complex linear algebraic group. The cluster
variety $U$ comes with a Langlands dual cluster variety $U^\vee$, and
Gross-Hacking-Keel-Kontsevich \cite{GHKK} proposed that $(X,D)$ has a Landau-Ginzburg
model $(U^\vee,W)$ in the sense of homological mirror symmetry. Here $W\in\mathcal{O}(U^\vee)$
is a regular function intrinsically defined by the cluster structure and given
as a sum of theta functions, which are generating functions of discrete objects
called broken lines in a scattering diagram. We expect that the cluster charts
of $U^\vee$ will correspond to certain Lagrangian tori in $L\subset G/P$, and that
the restriction of $W$ to different cluster charts will fully determine their disk
potential $W_L$ in some cases, and in general suffice to distinguish many of their
Hamiltonian isotopy classes in the spirit of Conjecture \ref{ConjIntro}.

\subsection{Algebraic and topological wall-crossing}

When $W_{L_\fraks}=W_\fraks$, the Lagrangian tori $L_\fraks\subset\Gr(k,n)$ constructed
in this article have disk potentials related by algebraic wall-crossing formulas
by construction. It is natural to ask if these formulas correspond
to a topological wall-crossing, i.e. if the tori $L_\fraks$ are connected by
families of Lagrangian immersions that bound Maslov 0 $J$-holomorphic disks
at some intermediate time. We do not investigate this question here, but only
point out that it would be interesting to see if there is a relation between
our examples and the model of Lagrangian mutation studied by Pascaleff-Tonkonog \cite{PT}.

\vspace{0.3cm}

\textbf{Acknowledgements} I thank my PhD advisor Chris Woodward for his constant
encouragement and useful conversations. I also thank Mohammed Abouzaid for useful
conversations about mirror symmetry, Lev Borisov for useful remarks on toric resolutions
of singularities, and Lauren Williams for pointing out that only quiver mutations at 4-valent
nodes are allowed as transition between plabic cluster charts, which was crucial
at some point of the project.

\section{The iterative construction}\label{SecConstruction}

Throughout this article, $k$ and $n$ are integers with $1\leq k < n$. The symbol
$d$ denotes a Young diagram in the $k\times (n-k)$ grid, obtained by
placing $d_i$ consecutive boxes in the $i$-th row for all $1\leq i\leq k$, starting
from the left in each row and with $d_1\geq d_2\geq \cdots \geq d_k$. Chosen $0\leq i\leq k$ and $0\leq j \leq (n-k)$,
one has a rectangular Young diagram $i\times j$, with
$i\times j=\emptyset$ empty diagram if $i=0$ or $j=0$.
A full rank $n\times (n-k)$ matrix $M$ determines an $(n-k)$-dimensional linear
subspace of $\CC^n$ by taking its column-span. If $[M]$ is the equivalence
class of $M$ modulo column operations, write $[M]\in\Gr^\vee(k,n)=\Gr(n-k,n)$ and think of
it as a point of the complex Grassmannian.
Each Young diagram $d$ has a profile path, which connects the top-right corner
of the $k\times (n-k)$ grid to the bottom-left one.
Labeling the steps of the path by $[n]=\{1,\ldots ,n\}$, the vertical steps
of $d$ determine a set $d^|\subset [n]$ with $|d^||=k$, while the horizontal
steps determine a set $d^-\subset [n]$ with $|d^-|=n-k$; see Figure \ref{FigYoungDiagram}
for an example with $k=3$ and $n=8$.

\begin{figure}[H]
  \centering
        \includegraphics[width=0.3\textwidth]{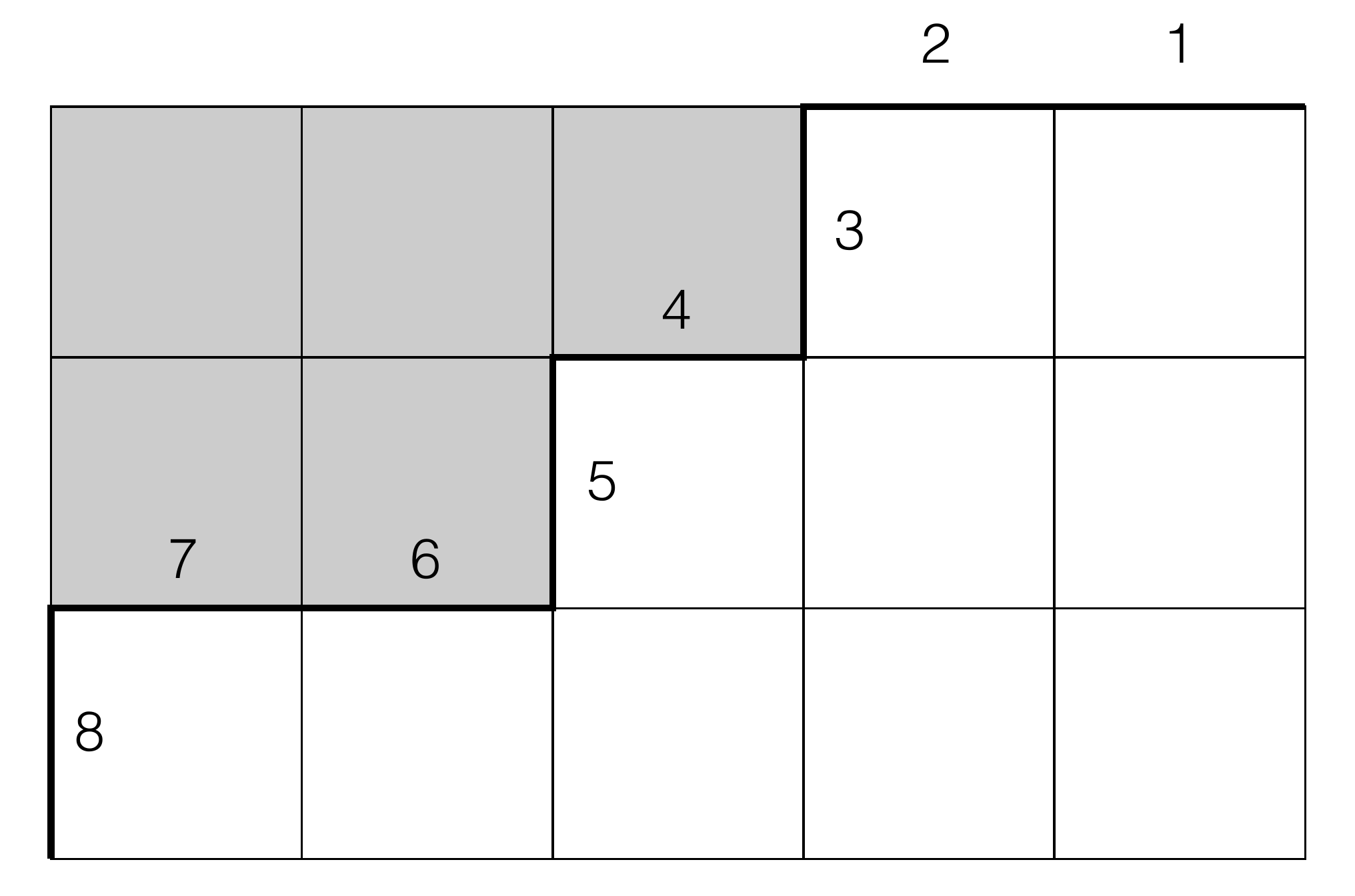}
    \caption{A Young diagram $d\subseteq 3\times 5$ with $d^-=\{1,2,4,6,7\}$ and $d^|=\{3,5,8\}$.}
    \label{FigYoungDiagram}
\end{figure}

\begin{definition}\label{DefPluckerCoordinate}
If $M$ is a full rank $n\times (n-k)$ matrix, the determinant of $M$ at rows
$d^-$ is denoted $x_d(M)$ and called Pl\"{u}cker coordinate corresponding
to $d$.
\end{definition}

The Pl\"{u}cker coordinates define a projective embedding of $\Gr^\vee(k,n)$
in $\PP^{{n\choose k}-1}$. If $\mathcal{I}_{k,n}\subset\CC[x_d \; : \; d\subseteq k\times(n-k)]$
is the corresponding homogeneous ideal, each $x_d$ is an element
of the algebra $\mathcal{A}_{k,n}=\CC[x_d \; : \; d\subseteq k\times(n-k)]/\mathcal{I}_{k,n}$
of regular functions of the affine cone over $\Gr^\vee(k,n)$.

\subsection{Initial seed}
 
\begin{definition}\label{DefQuiverPotential}
A quiver with potential of type $(k,n)$ is a pair $(Q,W)$, where:
\begin{enumerate}
	\item $Q$ is an oriented connected graph, with no edge connecting a node to itself
		and no oriented loops with two edges, whose nodes are labeled by Pl\"{u}cker coordinates $x_d\in\mathcal{A}_{k,n}$ ;
	\item $W$ is a Laurent polynomial in the labels of the nodes of $Q$ .
\end{enumerate}
As part of the data, the nodes of $Q$ are partitioned in two
groups, called frozen and mutable.
\end{definition}

\begin{remark}
To avoid confusion, we point out that Definition \ref{DefQuiverPotential} is
not a special case of the notion of quiver with potential in representation theory: although
$Q$ is a quiver in the classical sense, the potential $W$ is an element of the
commutative algebra $\mathcal{A}_{k,n}$ as opposed to the non-commutative path algebra of $Q$.
\end{remark}

The iterative construction we describe in this section begins with a specific
quiver with potential.

\begin{definition}\label{DefInitialSeed}
The initial seed of type $(k,n)$ is the quiver with potential $(Q_0,W_0)$, where:
\begin{enumerate}
	\item $Q_0$ is the oriented labeled graph in Figure \ref{FigInitialQuiver} ;
	\item $W_0$ is the Laurent polynomial
	$$x_{1\times 1} +
	\sum_{i=2}^{k}\sum_{j=1}^{n-k}\frac{   x_{i\times j}x_{(i-2)\times (j-1)}   }{   x_{(i-1)\times (j-1)}x_{(i-1)\times j}   } +
	\frac{   x_{(k-1)\times (n-k-1)}   }{   x_{k\times (n-k)}   } +
	\sum_{i=1}^{k}\sum_{j=2}^{n-k}\frac{ x_{i\times j}x_{(i-1)\times (j-2)} }{ x_{(i-1)\times (j-1)}x_{i\times (j-1)} }\quad .$$
\end{enumerate}
A node of $Q_0$ is frozen if its label is $x_{i\times j}$ with
$i\times j=\emptyset$, $i=k$ or $j=n-k$; the remaining nodes are mutable.
\end{definition}

\begin{figure}[H]
  \centering
        \includegraphics[width=0.4\textwidth]{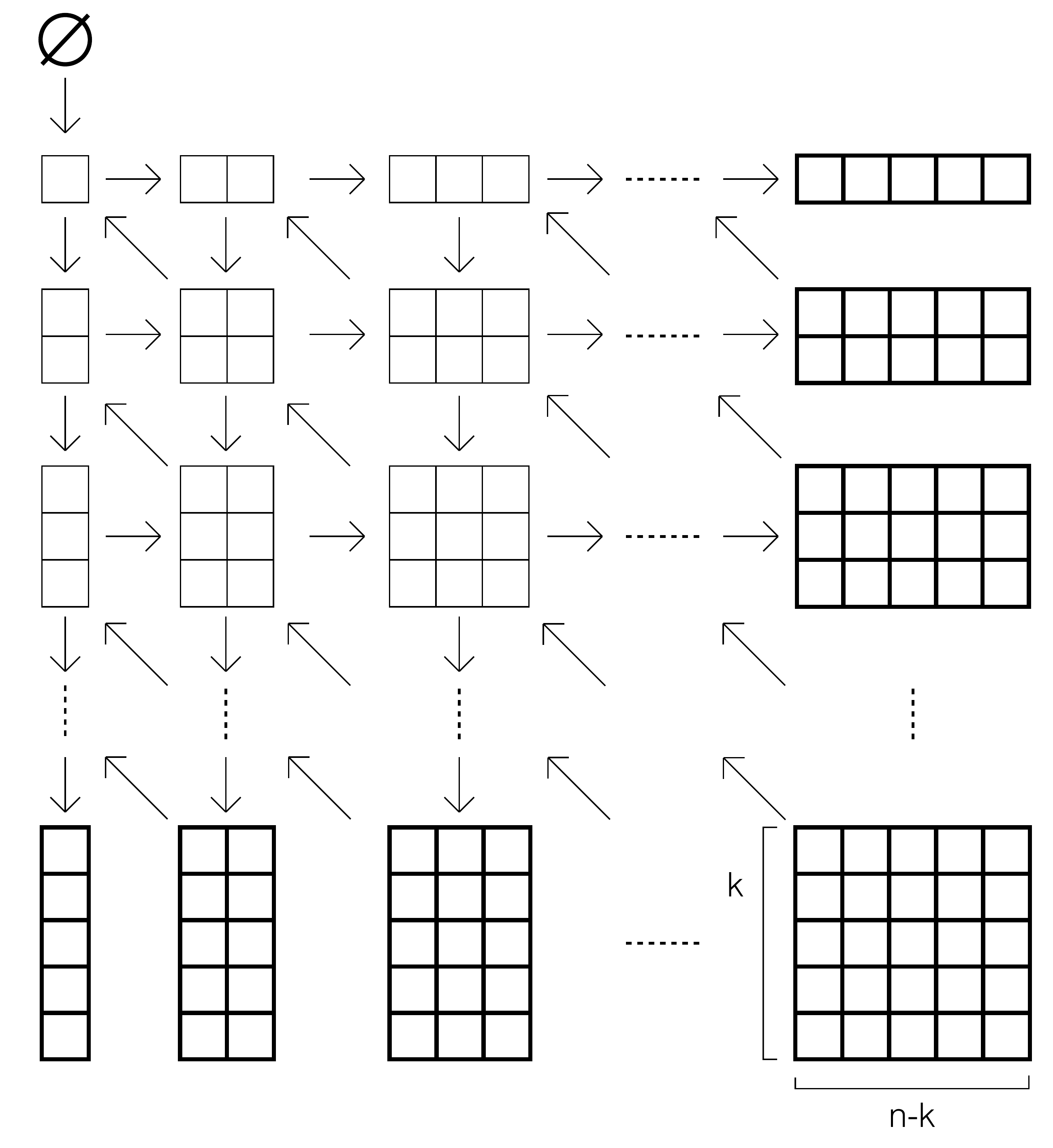}
    \caption{Initial quiver $Q_0$: labels $x_d$ are indicated by $d$, frozen nodes in bold type.}
    \label{FigInitialQuiver}
\end{figure}

Observe that the labels on the nodes of $Q_0$ are precisely the $k(n-k)+1$
variables $x_d$ where $d$ is a rectangular Young diagram, and $n$ of the
nodes are frozen.

\subsection{Mutation step}

Given a quiver with potential $(Q,W)$ as in Definition \ref{DefQuiverPotential},
and fixed a mutable node $v$ of $Q$, one can form a new labeled quiver $Q'$ as follows:

\begin{enumerate}
	\item start with $Q'=Q$, and for all length $2$ paths $a\to v\to b$ with at least one
		mutable node among $a$ and $b$, add to $Q'$ a new edge $a\to b$ ;
	\item modify $Q'$ by reversing all the edges incident to $v$ ;
	\item remove all oriented $2$-cycles formed in $Q'$, by deleting their arrows .
\end{enumerate}

Calling $l(w)$ the label of a node $w$ in $Q$, define new labels $l'(w)$
in $Q'$ by declaring $l'(w)=l(w)$ if $w\neq v$, and
$$l'(v)=\frac{\prod_{w\to v}l(w) + \prod_{v\to w} l(w)}{l(v)} \quad .$$
Since $Q$ and $Q'$ have the same nodes, the nodes of $Q'$ inherit
the property of being frozen or mutable from $Q$.

\begin{definition}\label{DefMutation}
The mutation of $(Q,W)$ along $v$ is the pair $(Q',W')$
with $Q'$ constructed as above, and $W'$ obtained from $W$ by substitution
$l(v)=( \prod_{w\to v}l'(w) + \prod_{v\to w} l'(w) )l'(v)^{-1}$.
\end{definition}

A priori, mutations of quivers with potentials as in Definition \ref{DefQuiverPotential}
are not necessarily quivers with potentials, since $l'(v)$ and
$W'$ are only rational functions of the Pl\"{u}cker coordinates $x_d$.
The following guarantees that certain iterated mutations of the
initial seed of Definition \ref{DefInitialSeed} remain quivers with potentials.

\begin{proposition}(Scott \cite[Theorem 3]{Sc} and Marsh-Rietsch \cite[Section 6.3]{MR})\label{PropPluckerMutations}
Given a finite sequence of mutations that starts at $(Q,W)$ and ends at $(Q',W')$:
\begin{enumerate}
	\item if $(Q,W)=(Q_0,W_0)$ is the initial seed of Definition \ref{DefInitialSeed},
	then $W'$ is a Laurent polynomial in the labels of $Q'$ ;
	\item if in addition each mutation of the sequence is based at some node with
	two incoming and two outgoing edges, then the labels of $Q'$ are Pl\"{u}cker coordinates $x_d$ .
\end{enumerate}
\end{proposition}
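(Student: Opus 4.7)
The proposition is essentially a direct translation of two cited results---Scott's theorem on the cluster algebra structure of $\mathcal{A}_{k,n}$ and Marsh-Rietsch's description of the Landau-Ginzburg potential---into the language of the quivers with potential of Definition \ref{DefQuiverPotential}. My plan is to realize $(Q_0, W_0)$ inside both of these frameworks and then argue that an arbitrary mutation sequence preserves the relevant structural properties by induction on its length.

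For (2), I would set up a dictionary between the quivers of Definition \ref{DefQuiverPotential} and plabic graphs for the top cell of $\Gr^\vee(k,n)$. Under Scott's correspondence, the initial seed $Q_0$ matches the ``rectangular'' plabic graph whose face labels are precisely the rectangular Young diagrams $i\times j$, and mutations at 4-valent nodes of the quiver correspond exactly to square moves on the plabic graph. Scott shows that every face label appearing in any plabic graph reachable from the initial one by square moves is a Pl\"{u}cker coordinate $x_d$ for some $d\subseteq k\times(n-k)$, with the exchange relation of Definition \ref{DefMutation} coinciding with the three-term Pl\"{u}cker identity on the relevant $2\times 2$ sublabel. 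Induction on the length of the mutation sequence then gives (2).

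For (1), I would identify $W_0$ with the restriction to the initial cluster chart $T_{\fraks_0}\subset U_{k,n}$ of the global Landau-Ginzburg potential $W\in\mathcal{A}_{k,n}$ of Marsh-Rietsch. Granting this identification, the substitution rule of Definition \ref{DefMutation} for producing $W'$ is equivalent to rewriting the same element $W\in\Frac(\mathcal{A}_{k,n})$ in terms of the labels of $Q'$, since the exchange relation holds as a genuine identity in this field. Each cluster chart $T_\fraks$ is an open algebraic torus whose coordinates are the cluster variables labeling $Q'$, and the restriction of a regular function on $U_{k,n}$ to an open algebraic subtorus is automatically a Laurent polynomial in those coordinates. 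This yields that $W'$ is Laurent in the labels of $Q'$, regardless of whether those labels are themselves Pl\"{u}cker coordinates or more general cluster variables obtained by non-4-valent mutations.

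The main obstacle, and really the only substantive step, is verifying the identification $W_0 = W|_{T_{\fraks_0}}$: one must match the explicit four-group formula for $W_0$ in Definition \ref{DefInitialSeed} with Marsh-Rietsch's intrinsic definition of $W$, which is given via Lusztig's parameterization of the totally nonnegative part of the Grassmannian. This is a matter of carefully tracking labeling conventions rather than introducing any new idea, and once it is in place, parts (1) and (2) follow formally from Scott's and Marsh-Rietsch's theorems by induction on the length of the mutation sequence.
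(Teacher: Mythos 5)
Your proposal is correct and follows essentially the same route as the paper: part (1) via the Marsh--Rietsch identification $W_0=W|_{T_0}$ together with the fact that the exchange relation holds in $\mathcal{A}_{k,n}$, so that $W'$ is the restriction of the global potential $W$ to the new torus chart and hence Laurent; part (2) via Scott's result that mutations at nodes with two incoming and two outgoing edges (square moves) send Pl\"{u}cker labels to Pl\"{u}cker labels. The only cosmetic difference is that you phrase the 4-valent mutation step in the language of plabic graphs, whereas the paper cites Scott's wiring arrangements and mentions the plabic-graph formulation via Rietsch--Williams as an alternative.
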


\begin{proof}
For the reader's convenience, we explain how the statements follow from the cited results.
It suffices to prove them when the sequence of mutations consists of a single
mutation, as the general case follows by applying repeatedly the same argument.
\begin{enumerate}
\item Marsh-Rietsch \cite[Section 6.3]{MR} (see also Rietsch-Williams \cite[Proposition 9.5]{RW})
showed that the potential $W_0$ of the initial seed is the restriction
$W_0=W_{\vert T_0}$ of a regular function $W\in\mathcal{A}_{k,n}$ to
an algebraic torus $T_0\subset \Gr^\vee(k,n)$ defined by
$$T_0 = \{ [M]\in \Gr^\vee(k,n) \; : \; l(M)\neq 0 \; \forall l \;\textrm{label of}\; Q_0 \} \quad .$$
By Scott \cite[Theorem 3]{Sc} $\mathcal{A}_{k,n}$ is a cluster algebra, and
the rational functions labeling the nodes of $Q'$ are cluster variables.
Just as with the labels of $Q_0$, one can use the labels of $Q'$
to define an algebraic torus $T'\subset\Gr^\vee(k,n)$ via
$$T' = \{ [M]\in \Gr^\vee(k,n) \; : \; l'(M)\neq 0 \; \forall l' \;\textrm{label of}\; Q' \} \quad ;$$
this torus is called toric chart in \cite[Section 6]{Sc}. By Definition \ref{DefMutation},
$W'$ is obtained from $W_0$ by substitution $l(v)=( \prod_{w\to v}l'(w) + \prod_{v\to w} l'(w) )l'(v)^{-1}$.
This means that $W'$ is the pull-back of $W_0$ along the birational map from $T'$ to $T_0$
defined by the substitution formula. It is part of the statement that $\mathcal{A}_{k,n}$
is a cluster algebra that the substitution formula gives a relation
$$l(v)l'(v) - \prod_{w\to v}l'(w) - \prod_{v\to w} l'(w) \quad \in\quad \mathcal{I}_{k,n} \quad ,$$
so that $W'=W_{\vert T'}$ is a restriction of $W$ as well. In particular, $W'$
is a regular function on the algebraic torus $T'$, and hence a Laurent polynomial.
\item If the mutation from $(Q,W)$ to $(Q',W')$ is based at some node $v$ with two
incoming and two outgoing edges, denote $\{v^+_1, v^+_2\}$ and $\{v^-_1,v^-_2\}$
the corresponding nodes of $Q$ adjacent to $v$. The substitution formula of Definition \ref{DefMutation}
simplifies to $l(v)=( l'(v^+_1)l'(v^+_2) + l'(v^-_1)l'(v^-_2))l'(v)^{-1}$. By definition
of mutation $l'(v^+_i)=l(v^+_i)$ and $l'(v^-_i)=l(v^-_i)$ for $i=1,2$. Moreover,
by assumption the $l$ labels are Pl\"{u}cker coordinates, meaning that
$l(v^+_i)=x_{d^+_i}$ and $l(v^-_i)=x_{d^-_i}$ for $i=1,2$ and $l(v)=x_d$ for some Young diagrams
$d, d^+_1, d^+_2, d^-_1, d^-_2\subseteq k\times (n-k)$. Scott \cite[proof of Theorem 3]{Sc}
proves that this implies $l'(v)=x_{d'}$ for some Young diagram $d'\subseteq k\times (n-k)$
too, using combinatorial objects called wiring arrangements. The same phenomenon is
discussed in Rietsch-Williams \cite[Lemma 5.6]{RW} in the combinatorial framework
of plabic graphs; see also the proof of Proposition \ref{PropPluckerDegenerations}
for a comparison between plabic graphs and quivers.
\end{enumerate}
\end{proof}

\begin{definition}\label{DefPluckerSequence}
A length $l$ Pl\"{u}cker sequence of mutations of type $(k,n)$, denoted $\fraks$,
is a finite sequence of pairs $(Q_i,W_i)$ with $0\leq i\leq l$ such that:
\begin{enumerate}
	\item $(Q_0,W_0)$ is the initial seed of type $(k,n)$ of Definition \ref{DefInitialSeed} ;
	\item $(Q_{i+1},W_{i+1})$ is obtained from $(Q_i,W_i)$ by mutation along a mutable
		node with two incoming and outgoing edges, as in Definition \ref{DefMutation} .
\end{enumerate}
If we want to suppress the length $l$, we denote $(Q_l,W_l)=(Q_\fraks, W_\fraks)$
and call it the final quiver with potential of $\fraks$.
\end{definition}

\subsection{Relation with polytope mutations}

Later on in this article, we will be interested in how the Newton polytope $P_\fraks = \Newt(W_\fraks)$
changes throughout a sequence of mutations in the sense of Definition \ref{DefMutation}.

\begin{definition}\label{DefFanoPolytope}
A convex polytope $P\subset\RR^N$ is Fano if the following properties hold:
\begin{enumerate}
	\item $\dim(P)=N$ ;
	\item $0\in\RR^N$ is an interior point of $P$ ;
	\item the vertices $V(P)$ form a set of primitive vectors $V(P)\subset\ZZ^N$ .
\end{enumerate}
\end{definition}

Proposition \ref{PropNewtonFano} proves that each $P_\fraks$ is a Fano polytope.
The Fano condition has the following interpretation in terms of toric geometry;
see \cite{Fu, CLS} for some general background on toric varieties.

\begin{lemma}\label{LemmaToricFano}
If $P$ is a Fano polytope, then the polyhedral fan $\Sigma=\Sigma^fP$ consisting
of the cones spanned by its faces is such that the associated toric variety $X(\Sigma)$
is Fano, meaning that:
\begin{enumerate}
	\item the anti-canonical toric Weil divisor $D_\Sigma$ is $\QQ$-Cartier ;
	\item $D_\Sigma$ is ample .
\end{enumerate}
\end{lemma}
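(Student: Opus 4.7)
The plan is to construct the support function of the anti-canonical divisor directly from the polytope $P$ and apply standard toric criteria for $\QQ$-Cartier and ampleness (as in \cite{Fu, CLS}).

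First I would set up the combinatorial dictionary: the rays of $\Sigma = \Sigma^f P$ are $\rho_i = \RR_{\geq 0}u_i$ as $u_1,\ldots ,u_r$ range over the primitive vertices of $P$, and the maximal cones are $\sigma_F = \Cone(F)$ as $F$ ranges over the facets of $P$. The anti-canonical toric Weil divisor is $D_\Sigma = \sum_i D_{\rho_i}$. By the standard toric correspondence, $D_\Sigma$ is $\QQ$-Cartier iff there exists a continuous piecewise linear function $\varphi_P\colon\RR^N\to\RR$ which is linear with rational slope on each maximal cone of $\Sigma$ and satisfies $\varphi_P(u_i)=1$ for every ray generator $u_i$.

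Next I would define $\varphi_P$ as the Minkowski gauge of $P$: set $\varphi_P(0) = 0$ and $\varphi_P(x) = \inf\{ t>0 \; : \; x\in tP\}$ for $x\neq 0$. Equivalently, for each facet $F$ of $P$, its vertices span an affine hyperplane $H_F=\{x\; :\; \langle m_F, x\rangle = 1\}$ which does not contain $0$ (because $0$ is interior to $P$), and $\varphi_P\vert_{\sigma_F}=\langle m_F,\cdot\rangle$. The functional $m_F$ exists in $M_\QQ$ because the linear system $\langle m_F,v\rangle=1$ for $v$ a vertex of $F$ has rational coefficients and admits a solution. Consistency on an overlap $\sigma_{F_1}\cap \sigma_{F_2}=\Cone(F_1\cap F_2)$ holds because $m_{F_1}$ and $m_{F_2}$ both evaluate to $1$ on the vertices of the common face, which together with $0$ span the overlap cone, so $m_{F_1}-m_{F_2}$ vanishes on the entire overlap. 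Since $\varphi_P(u_i)=1$ for each primitive vertex $u_i\in V(P)$, this establishes (1) and clears a positive integer denominator to exhibit a Cartier multiple of $D_\Sigma$.

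For (2) I would invoke the toric criterion that a $\QQ$-Cartier divisor is ample iff its support function is strictly convex with respect to $\Sigma$, in the sense that at each wall $\tau=\sigma_{F_1}\cap\sigma_{F_2}$ between adjacent maximal cones the linear pieces $m_{F_1}$ and $m_{F_2}$ are distinct. Convexity of $\varphi_P$ is immediate from convexity of $P$ (the gauge of a convex set containing $0$ in its interior is sublinear). Strict convexity across a wall amounts to showing that two adjacent facets $F_1,F_2$ of $P$ lie on distinct supporting hyperplanes: if $m_{F_1}=m_{F_2}$, then $F_1\cup F_2\subset H_{F_1}=H_{F_2}$, and the convex hull of this union would be a face of $P$ strictly larger than either $F_i$, contradicting the fact that $F_1$ and $F_2$ are maximal proper faces. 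Hence $\varphi_P$ is strictly convex on $\Sigma$ and $D_\Sigma$ is ample.

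The only subtle point, and the one I would expect to require the most care, is tracking the rational-versus-integral distinction: since $P$ is allowed to be non-simple (facets may have more than $N$ vertices), the $m_F$ typically lie in $M_\QQ\setminus M$, so only a multiple of $D_\Sigma$ is Cartier; this is exactly why the statement gives the $\QQ$-Cartier conclusion rather than Cartier. Everything else is a direct unwinding of the Fano hypotheses in Definition \ref{DefFanoPolytope}.
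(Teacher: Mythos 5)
Your proof is correct, but it reaches the conclusion by a different route than the paper. The paper works through polar duality: it forms $P^\circ=\{v:\langle v,u\rangle\geq -1\ \forall u\in P\}$, identifies the required Cartier data $m_\sigma$ as the vertices of $P^\circ$ (each obtained as the intersection of the facets $F_{u_\rho}$, $\rho\in\sigma(1)$), and then gets ampleness from the criterion that the $m_\sigma$ are the distinct vertices of the polytope $P_{D_\Sigma}=P^\circ$, using the bijection between faces of $P^\circ$ and cones of its normal fan. You instead build the support function directly as the Minkowski gauge of $P$, producing the same functionals $m_F$ (up to the sign convention) as the affine hulls of the facets, and you certify ampleness via strict convexity of the support function across walls. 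The two ampleness criteria are equivalent, and both arguments hinge on the same two inputs from Definition \ref{DefFanoPolytope} — that $0\in\Int(P)$ (so each facet hyperplane misses the origin and can be normalized, with rational solution since $V(P)\subset\ZZ^N$) and that $\dim P=N$ (so no face properly contains a facet). Your version is more self-contained and avoids citing the polar-duality machinery; the paper's version has the side benefit of explicitly identifying $P_{D_\Sigma}$ with the polar dual $\Delta=P^\circ$, which reappears as the moment polytope in Section \ref{SecLagrangians}. One small point of precision: in your strict-convexity step, $\Conv(F_1\cup F_2)$ need not itself be a face of $P$; the clean statement is that if $m_{F_1}=m_{F_2}$ then $F_1\cup F_2\subset P\cap H_{F_1}$, and $P\cap H_{F_1}$ is a face properly containing the facet $F_1$, hence all of $P$, contradicting $0\notin H_{F_1}$. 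This is cosmetic and does not affect the validity of the argument.
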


\begin{proof}
The anti-canonical toric Weil divisor is defined to be
$$D_\Sigma = \sum_{\rho\in\Sigma(1)}D_\rho \quad ,$$
where the sum is over the one-dimensional cones $\rho\in\Sigma(1)$ and $D_\rho$ is
the prime divisor corresponding to $\rho$. By \cite[Theorem 4.2.8]{CLS} $D_\Sigma$
is $\QQ$-Cartier if and only if
$$\forall\sigma\in\Sigma(N) \; \exists m_\sigma\in\QQ^N \; : \forall\rho\in\sigma(1) \;\; \langle m_\sigma,u_\rho\rangle = -1 \quad ,$$
where $\Sigma(N)$ is the set of maximal cones of $\Sigma$, $\sigma(1)$ is the set
of one-dimensional cones in $\sigma$, and $u_\rho\in\ZZ^N$ is the primitive generator of $\rho$;
if $m_\sigma$ exists then it is unique, and when all $m_\sigma\in\ZZ^N$ one recorvers
the stronger Cartier condition. By assumption (3) in Definition \ref{DefFanoPolytope}
and the fact that $\Sigma=\Sigma^fP$, one has $u_\rho\in V(P)$ for all $\rho\in\Sigma(1)$.
Now consider the polar dual polytope
$$P^\circ = \{ \; v\in\RR^N \; : \; \langle v,u\rangle \geq -1 \; \forall u\in P \; \} \quad ;$$
since $(P^\circ)^\circ = P$ and $V(P)\subset\ZZ^N$, the polytope $P^\circ$ is reflexive.
One equivalent formulation of reflexivity is to say that each vertex of the polar
dual polytope $u_\rho\in V((P^\circ)^\circ)=V(P)$ defines a facet (or codimension one face)
$$F_{u_\rho} = P^\circ \cap \{ \; v\in\RR^N \; : \; \langle v , u_\rho\rangle = -1 \; \} \quad .$$
We claim that for each $\sigma\in\Sigma(N)$ one has
$$\bigcap_{\rho\in\sigma(1)}F_{u_\rho} = \{m_\sigma\} \quad ,$$
where $m_\sigma\in\QQ^N$ and it satisfies the $\QQ$-Cartier condition. Polar duality
exchanges the face fan $\Sigma^f$ and the normal fan $\Sigma^n$, so that $\sigma\in\Sigma=\Sigma^fP = \Sigma^nP^\circ$;
from this point of view the vector $u_\rho$ can be thought of as inward-pointing
normal to the facet $F_{u_\rho}\subset P^\circ$. By \cite[Proposition 2.3.8]{CLS}
the intersection above describes the unique vertex $m_\sigma\in V(P^\circ)$
corresponding to the $N$-dimensional cone $\sigma$ of the normal fan. Although
in general $m_\sigma\notin\ZZ^N$, one always has $m_\sigma\in\QQ^N$, because
$P^\circ$ is the polar dual of a polytope $P$ with $V(P)\subset\ZZ^N$; see
\cite[Exercise 2.2.1(a)]{CLS}. Finally, $\langle m_\sigma, u_\rho \rangle =-1$
for all $\rho\in\sigma(1)$ because $m_\sigma\in F_{u_\rho}$ by construction.
This proves part (1), for part (2) proceed as follows. The $\QQ$-Cartier divisor
$D_\Sigma$ has a support function $\phi_{D_\Sigma}:\RR^N\to\RR$, which is piecewise-linear
on $\Sigma$ and such that $\phi_{D_\Sigma}(u_\rho)=-1$ for all $\rho\in\Sigma(1)$.
By \cite[Theorem 6.1.7, Lemma 6.1.13]{CLS} $D_\Sigma$ is ample if and only if
the points $\{ \; m_\sigma \; : \sigma\in\Sigma(N) \; \}$ are the vertices of
the polytope
$$P_{D_{\Sigma}}=\{ \; v\in\RR^N \; : \; \langle v,u_\rho\rangle \geq \phi_{D_\Sigma}(u_\rho) \; \forall\rho\in\Sigma(1) \; \} \quad ,$$
and moreover $m_\sigma\neq m_{\sigma'}$ for $\sigma\neq \sigma'$. This is true
because $P_{D_\Sigma}=P^\circ$ and the fact that the correspondence between
faces of $P^\circ$ and cones of its normal fan is a bijection.
\end{proof}

Akhtar-Coates-Galkin-Kasprzyk \cite{ACGK} proposed a general notion of polytope
mutation that should describe how the Newton polytope of a Laurent polynomial changes
under the action of special birational maps of a torus. More precisely, consider
a birational map of the form
$$\phi : (\CC^\times)^N \tto (\CC^\times)^N \quad , \quad \phi = \phi_{M_2} \circ \phi_A \circ \phi_{M_1} \quad ,$$
where
$$ (\CC^\times)^N = \Spec\CC[x_1^{\pm},\ldots ,x_N^{\pm}] \quad \text{and} \quad \phi_A(x_1,\ldots ,x_N)=(x_1,\ldots ,x_{N-1},Ax_N)$$
for some Laurent polynomial $A$ with $\partial_{x_N}A = 0$, and where
$$\phi_{M_i}(x_1,\ldots ,x_N)=(x_1^{m_{11}}\cdots x_N^{m_{1N}},\ldots ,x_1^{m_{N1}}\cdots x_N^{m_{NN}})
	\quad , \quad M_i = (m_{st})_{1\leq s,t\leq N}\in GL(N,\ZZ)$$
for $i=1,2$ are automorphisms of the torus, specified by invertible integer matrices.
If $f$ is a Laurent polynomial, one can think of it as a polynomial in the $x_N^{\pm}$ variables
with coefficients $C_h$ which are Laurent polynomials with $\partial_{x_N}C_h = 0$ and
write
$$f = \sum_{-h_{min}\leq h\leq h_{max}}C_h(x_1,\ldots ,x_{N-1})x_N^h \quad \text{with} \quad h_{min},h_{max}\in\NN \quad .$$
Then the rational function
$$\phi_A^*f = \sum_{-h_{min}\leq h\leq h_{max}} \frac{C_h}{A^{-h}}x_N^h $$
is again a Laurent polynomial whenever $A^{-h} \vert C_t$ for all $-h_{min}\leq h < 0$, and so
is $\phi^*f=g$ because $\phi_{M_1},\phi_{M_2}$ are automorphisms.
The Newton polytopes $P = \Newt(f)$ and $P'=\Newt(g)$ are convex hulls
in $\RR^N$ of the exponent vectors of the monomials in $f$ and $g$ respectively.
The special form of $\phi_A$ singles out the $x_N$ variable, and a width vector
$w =(0,\ldots ,0,1)\in\ZZ^N$ corresponding to this choice. For heights
$-h_{min}\leq  h\leq h$ one can form lattice polytopes $w_h(P)\subseteq P$ by taking
the convex hull of lattice points in hyperplane sections orthogonal to the $w$-direction:
$$w_h(P) = \Conv(P\cap\{\langle \cdot, w\rangle = h \}\cap\ZZ^N ) \quad .$$
In fact, $h_{max} - h_{min}\in\NN$ can be thought as the width of the polytope $P$
with respect to the $w$-direction, and $h$ as a height coordinate. Calling
$$ F = \Newt(A) \quad \text{and} \quad G_h = \Newt\left(\frac{C_h}{A^{-h}}\right) \quad \text{for} \; -h_{min}\leq h\leq h_{max} \quad ,$$
the polytope $F\subset\RR^N$ has codimension at least one and lies at height $h=0$.
Denoting $V(P)$ the vertices of $P$ one has
$$V(P)\cap\{\langle \cdot, w\rangle = h \} \subseteq G_h + (-h)F \subseteq w_h(P) \quad \forall \, -h_{min}\leq h < 0 \quad .$$
The notation $P' = \mut_w(P,F)$ expresses the fact that $P'$ is a polytope mutation
of $P$ in direction $w$ and with factor $F$, and the quantity $h_{max} - h_{min}$
is called width of the mutation.
We now explain how mutation in the sense of Definition \ref{DefMutation} is related
to polytope mutations.

\begin{proposition}\label{PropNewtonFano}
If $(Q',W')$ is obtained from $(Q,W)$ by mutation along a node $v$, then the Newton
polytopes $P=\Newt(W)$ and $P=\Newt(W')$ are related by polytope mutation. In particular,
$P_\fraks$ and $X(\Sigma^fP_\fraks)$ are Fano for any Pl\"{u}cker sequence $\fraks$.
\end{proposition}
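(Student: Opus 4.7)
The plan has two parts. First, I reinterpret the mutation of Definition~\ref{DefMutation} as a combinatorial mutation of Akhtar-Coates-Galkin-Kasprzyk type, using the factorisation framework described in the paragraphs preceding the proposition. Setting $M_+=\prod_{w\to v}l(w)$ and $M_-=\prod_{v\to w}l(w)$, both Laurent monomials in labels distinct from $l(v)$, the exchange relation from Definition~\ref{DefMutation} rearranges as
\[
l(v)=\frac{M_-}{l'(v)}\cdot\Bigl(1+\frac{M_+}{M_-}\Bigr),
\]
which realises the substitution $W\mapsto W'$ as the pullback by $\phi=\phi_A\circ\phi_{M_1}$: here $\phi_{M_1}$ is the monomial automorphism $(y_i,l'(v))\mapsto(y_i,M_-/l'(v))$, whose matrix lies in $GL(N,\ZZ)$ (its last row encodes the exponents of $M_-$ together with a $-1$ in the $l(v)$-slot, so the determinant is $\pm 1$), and $\phi_A$ is the shear by the Laurent polynomial $A=1+M_+/M_-$, which does not involve $l(v)$. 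Proposition~\ref{PropPluckerMutations} guarantees that $W'$ is itself Laurent, so the divisibility condition $A^{-h}\mid C_h$ of the ACGK formalism holds automatically on the negative $l(v)$-powers. Therefore $\Newt(W')$ is the polytope mutation $\mut_w(\Newt(W),F)$ in the direction $w$ dual to the $l(v)$-exponent, with factor $F=\Newt(A)$, the unit segment from the origin to the exponent vector of $M_+/M_-$.

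For the Fano conclusion, I induct on the length of the Pl\"{u}cker sequence $\fraks$. In the base case, I verify directly that $P_0=\Newt(W_0)$ is a Fano polytope. Each monomial of $W_0$ listed in Definition~\ref{DefInitialSeed} has exponent vector with entries in $\{-1,0,1\}$, so every vertex of $P_0$ is automatically primitive. Full-dimensionality and the property $0\in\Int(P_0)$ are read off from the explicit structure of $W_0$: the two double sums, exchanged by the transposition symmetry of the $k\times(n-k)$ rectangle, together with the two remaining monomials $x_{1\times 1}$ and $x_{(k-1)\times(n-k-1)}/x_{k\times(n-k)}$, produce both a spanning set of $N$ exponent vectors and a strictly positive convex combination summing to zero. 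For the inductive step, polytope mutation preserves the Fano property: $\phi_{M_1}$ is a lattice automorphism so primitivity and full-dimensionality persist; the factor $F$ contains the origin, and the height-zero slice of the Newton polytope is unchanged by mutation, so the origin remains interior as long as $P$ is full-dimensional on either side of this slice, which is inherited inductively. Lemma~\ref{LemmaToricFano} applied to the Fano polytope $P_\fraks$ then yields that the associated toric variety $X(\Sigma^f P_\fraks)$ is Fano.

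The main obstacle is the explicit verification that $0\in\Int(P_0)$ in the base case: while the transposition symmetry suggests how to pair up monomials of the two double sums, the rectangular Young-diagram indexing partially obscures this pairing, and one must organise the resulting cancellations carefully to produce a strictly positive convex combination that meets the origin in its relative interior and covers all $N$ coordinate directions simultaneously.
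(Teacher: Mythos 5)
Your proposal follows essentially the same route as the paper: factor the exchange substitution as $\phi_A\circ\phi_{M_1}$ in the Akhtar--Coates--Galkin--Kasprzyk framework (your normalization $A=1+M_+/M_-$ with a monomial twist is unimodularly equivalent to the paper's choice $A=M_++M_-$ with a simple coordinate inversion), verify by inspection that $P_0=\Newt(W_0)$ is Fano, and propagate the Fano property through the sequence of mutations, finishing with Lemma \ref{LemmaToricFano}. The only substantive difference is that the paper invokes \cite[Proposition 2]{ACGK} for the fact that polytope mutation preserves the Fano condition, whereas you sketch that preservation by hand; your sketch leaves out why the vertices created by the shear $\phi_A$ stay primitive (each is the image of a vertex of $P$ under a unimodular shear $u\mapsto u+\langle w,u\rangle u_F$), but this is precisely the content of the cited proposition, so nothing in your argument would fail.
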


\begin{proof}
Define the complex tori
$$T=\Spec(\CC[x_d^\pm : d \; \textrm{label of} \; Q])\quad , \quad T'=\Spec(\CC[x_d^\pm : d \; \textrm{label of} \; Q']) \quad ,$$
and think of $W$ and $W'$ as regular functions on them.
Quiver mutation along $v$ changes the label $x_{l(v)}$ of $Q$ into a new label $x_{l'(v)}$
in $Q'$, and the two are related by
$$x_{l(v)}x_{l'(v)} = \prod_{w\to v}x_{l(w)} + \prod_{v\to w} x_{l(w)} \quad .$$
Up to automorphisms of tori, one can arrange the coordinates
in such a way that $x_{l(v)}$ and $x_{l'(v)}$ go last, and the common ones appear in the same order.
With this choice, there is a birational transition map between the tori
$$\phi = (\operatorname{id}, (\prod_{w\to v}x_{l(w)} + \prod_{v\to w} x_{l(w)})x_{l(v)}^{-1}) \quad .$$
The Laurent polynomial $A=\prod_{w\to v}x_{l(w)} + \prod_{v\to w} x_{l(w)}$ satisfies $\partial_{x_{l(v)}}A=0$,
and using the notation introduced in this section $\phi = \phi_A\circ\phi_{M_1}$,
where $\phi_{M_1}$ is the automorphism of $T$ that inverts the last coordinate.
By direct inspection, one sees that the polytope $P_0=\Newt(W_0)$ corresponding to
the initial potential $W_0$ given in Definition \ref{DefInitialSeed} is Fano in
the sense of Definition \ref{DefFanoPolytope}. It follows from \cite[Proposition 2]{ACGK} that $P_\fraks$ is Fano
for every Pl\"{u}cker sequence $\fraks$, and thus $X(\Sigma^fP_\fraks)$ is too,
thanks to Lemma \ref{LemmaToricFano}.
\end{proof}

\section{Pl\"{u}cker Lagrangians}\label{SecLagrangians}

In this section, $\Sigma$ denotes a complete fan in $\RR^{k(n-k)}$, and $X(\Sigma)$
its associated proper toric variety; see for example \cite{Fu, CLS} for background
material on toric geometry. The reader familiar with symplectic manifolds and
Hamiltonian torus actions can think of $\Sigma$ as the normal fan $\Sigma = \Sigma^n\Delta$
of a moment polytope $\Delta$, with the important caveat that $X(\Sigma)$ is
typically singular, and not even an orbifold; in this case $\Delta$ should be
thought as the closure of the open convex region obtained from the moment
map of the maximal torus orbit.
\par
We will assume that the primitive generators of the rays of $\Sigma$ in the
lattice $\ZZ^{k(n-k)}\subset\RR^{k(n-k)}$ are the vertices of a convex polytope
$P$, and alternatively think of $\Sigma$ as its face fan $\Sigma = \Sigma^fP$.
This condition is equivalent to $X(\Sigma)$ being Fano, and $P$ is sometimes
called a Fano polytope. The reader should not confuse the polytopes $\Delta$
and $P$: the second is always a lattice polytope, whereas the first may not be.
The two polytopes are related by polar duality $\Delta = P^\circ$.

\subsection{Lagrangian tori from degenerations}

\begin{definition}\label{DefToricDegeneration}
If $X\subset\PP^M$ is a smooth subvariety of complex dimension $N$, an
embedded toric degeneration
$X\rightsquigarrow X(\Sigma)$ is a closed subscheme $\cX\subset\PP^M\times\CC$
such that the map $p:\cX\to\CC$ obtained by restriction of the projection
satisfies the following properties:
\begin{itemize}
	\item $p^{-1}(\CC^\times) \cong X\times\CC^\times$ as schemes over $\CC^\times$ ;
	\item $p^{-1}(0)\subset\PP^M$ is an orbit closure for some linear torus action $(\CC^\times)^N\curvearrowright\PP^M$ ;
	\item $p^{-1}(0)$ is a toric variety with fan $\Sigma$ .
\end{itemize}
\end{definition}

\begin{proposition}\label{PropPluckerDegenerations}(Rietsch-Williams \cite[Theorem 1.1]{RW})
Every Pl\"{u}cker sequence $\fraks$ of mutations of type $(k,n)$
has an associated embedded toric degeneration $\Gr(k,n)\rightsquigarrow X(\Sigma_\fraks)$,
where $\Sigma_\fraks = \Sigma^fP_\fraks$ is the face fan of the Newton polytope
$P_\fraks$ of the final potential $W_\fraks$.
\end{proposition}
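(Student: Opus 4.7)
The plan is to reduce the statement to \cite[Theorem 1.1]{RW} by identifying each Plücker sequence with a cluster seed arising from a plabic graph for the open positroid stratum $U_{k,n}\subset\Gr^\vee(k,n)$, and then matching the polytope that Rietsch--Williams produce with $P_\fraks$. The argument has three conceptual steps, of which the third is the delicate one.

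First, I would identify the data $(Q_\fraks, W_\fraks)$ with a cluster seed coming from a plabic graph. The initial seed of Definition \ref{DefInitialSeed} corresponds to the ``rectangles'' plabic graph of \cite[Section 9]{RW}. By Proposition \ref{PropPluckerMutations}, along any Plücker sequence the labels remain Plücker coordinates and the potential stays Laurent; meanwhile the restriction in Definition \ref{DefPluckerSequence} to mutations at 4-valent nodes corresponds, via \cite[Lemma 5.6]{RW}, to square moves between plabic graphs. Consequently every Plücker sequence traces a valid path in the plabic exchange graph used in \cite{RW}, with associated cluster chart $T_\fraks\subset U_{k,n}$.

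Second, I would invoke \cite[Theorem 1.1]{RW}: to each such plabic seed they attach an embedded toric degeneration of $\Gr(k,n)$ whose special fiber is a Gorenstein toric Fano variety, whose defining lattice polytope is the Newton--Okounkov body of the Landau--Ginzburg potential $W\in\mathcal{A}_{k,n}$ computed via a valuation $\nu_\fraks$ built from the seed. Then I would identify this Newton--Okounkov body with $P_\fraks=\Newt(W_\fraks)$: by Proposition \ref{PropPluckerMutations}(1) the restriction of $W$ to the chart $T_\fraks$ is exactly $W_\fraks$, and $\nu_\fraks$ reads off the exponent vector of a Laurent monomial written in the seed coordinates, so the lattice points obtained as values of $\nu_\fraks$ on $W_{|T_\fraks}$ are precisely the exponent vectors of monomials of $W_\fraks$. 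Under the polar duality between the Fano polytope $P_\fraks$ and the moment polytope $P_\fraks^\circ$, the face fan of the former equals the normal fan of the latter, so the resulting special fiber is $X(\Sigma^f P_\fraks)$, which is Fano by Lemma \ref{LemmaToricFano} together with Proposition \ref{PropNewtonFano}.

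The main obstacle is this identification of polytopes. Rietsch and Williams describe their Newton--Okounkov body via a combinatorial flow model on the plabic graph and by tropicalizing Plücker relations, whereas $P_\fraks$ is read off directly from the Laurent monomials of $W_\fraks$. Verifying that both descriptions produce the same lattice polytope---equivalently, that the primitive ray generators of the degeneration fan coincide with the vertices of $\Newt(W_\fraks)$---is an explicit check for the initial seed carried out in \cite[Section 9]{RW}, and Proposition \ref{PropNewtonFano} together with \cite[Proposition 2]{ACGK} propagates the equality through compatible polytope mutations along the full length of any Plücker sequence.
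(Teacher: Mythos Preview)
There is a genuine confusion in your second step. The Newton--Okounkov body that Rietsch--Williams attach to the seed is \emph{not} $P_\fraks=\Newt(W_\fraks)$ but rather its polar dual $P_\fraks^\circ$. The Okounkov body $\Delta_{G_\fraks}(D_{FZ})$ is built from the anticanonical divisor $D_{FZ}$ on $\Gr(k,n)$, by applying the valuation $\nu_\fraks$ to sections of powers of the associated line bundle; it is not obtained by evaluating $\nu_\fraks$ on the single function $W$, which lives on the mirror $\Gr^\vee(k,n)$. Your sentence ``$\nu_\fraks$ reads off the exponent vector of a Laurent monomial \ldots\ on $W_{|T_\fraks}$'' conflates the A-side valuation with the B-side potential and does not compute the Okounkov body. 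With the correct identification, the special fiber of the degeneration has fan $\Sigma^n\Delta_{G_\fraks}(D_{FZ})$, and the remaining task is precisely to prove $\Delta_{G_\fraks}(D_{FZ})=P_\fraks^\circ$; the polar duality you mention at the end of step two is the conclusion, not something you can simply invoke.

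The missing ingredient is the tropical description in \cite[Theorem 1.1, Definitions 10.7 and 10.14]{RW}: one has $v\in\Delta_{G_\fraks}(D_{FZ})$ if and only if $\Trop(W_{|T_\fraks})(v)\geq -1$, which unwinds to $\langle v,u\rangle\geq -1$ for every exponent $u$ of $W_\fraks$, hence for every vertex of $P_\fraks$. That is exactly the polar dual condition $\Delta_{G_\fraks}(D_{FZ})=P_\fraks^\circ$, valid for every seed at once, so that $\Sigma_\fraks=\Sigma^n P_\fraks^\circ=\Sigma^f P_\fraks$; no induction along the sequence is required. Your proposed inductive route via \cite[Proposition 2]{ACGK} would in addition need that the Okounkov bodies for adjacent seeds are related by the polytope mutation polar-dual to the one relating $P_\fraks$ and $P_{\fraks'}$, which is a separate statement not supplied by Proposition \ref{PropNewtonFano}.
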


\begin{proof}
For the reader's convenience, we provide details on how to specialize the result
of Rietsch-Williams \cite[Theorem 1.1]{RW} to recover this statement.
Each step $(Q_i,W_i)$ of the Pl\"{u}cker sequence $\fraks$ corresponds to a
reduced plabic graph $G_i$ of type $\pi_{k,n}$ \cite[Section 3]{RW}, which is
a combinatorial object encoding the quiver $Q_i$ and the Laurent polynomial
$W_i$ simultaneously. Nodes in $Q_i$ correspond to
faces in $G_i$, and each arrow of $Q_i$ is dual to an edge of $G_i$, with
black/white nodes of the plabic graph respectively to the right/left of the arrow.
The frozen nodes of $Q_i$ correspond to boundary faces of $G_i$, and the mutable
nodes to interior faces. Mutations at some mutable node with two incoming and outgoing
arrows in $Q_i$ correspond to a square move on the plabic graph $G_i$. The
Pl\"{u}cker variables on nodes of $Q_i$ are labeled by the Young diagrams
appearing on the faces of $G_i$, which are induced by trips as in \cite[Definition 3.5]{RW}.
The Laurent polynomial $W_i$ is a generating function
counting matchings on the plabic graph $G_i$ \cite[Theorem 18.2]{RW}; see also
Marsh-Scott \cite{MS} for a proof. The initial seed $(Q_0,W_0)$ corresponds
to a particular plabic graph $G_0=G^{rec}_{k,n}$, called the rectangle plabic
graph in \cite[Section 4]{RW}. Consider
the divisor $D_i\subset\Gr(k,n)$ cut out by the equation $x_{d_i}=0$, with
$d_i\subseteq k\times (n-k)$ one of the $n$ frozen Young diagrams, and call
$D=r_1D_1+\cdots +r_nD_n$ a general effective divisor with the same support.
One can associate to the pair $(D,G_\fraks)$ a convex polytope $\Delta_{G_\fraks}(D)$
known as Okounkov body \cite[Section 1.2]{RW}. From now on set
$r_1=\ldots =r_n=1$, and call $D_{FZ}=D_1+\cdots +D_n$ the corresponding divisor.
There exists a scaling factor $r_\fraks\in\QQ^+$
such that $r_\fraks\Delta_{G_\fraks}(D_{FZ})$ is a normal lattice polytope \cite[Definition 2.2.9]{CLS};
normality is referred to as integer decomposition
property in \cite[Definition 17.7]{RW}, and from \cite[Proposition 19.4]{RW}
one sees that the scaling factor mentioned there is related to ours by $r_\fraks=\frac{r_{G_\fraks}}{n}$.
From \cite[Section 17]{RW} one gets a degeneration of $\Gr(k,n)$ to the toric variety
associated with the polytope $r_\fraks\Delta_{G_\fraks}(D_{FZ})$, and this
is an embedded toric degeneration in the sense of Definition \ref{DefToricDegeneration}
with fan $\Sigma_\fraks=\Sigma^nr_\fraks\Delta_{G_\fraks}(D_{FZ})=\Sigma^n\Delta_{G_\fraks}(D_{FZ})$,
where we used that the normal fan of a polytope doesn't change under scaling. In \cite[Theorem 1.1]{RW} and
\cite[Definition 10.14]{RW}, an interpretation of $\Delta_{G_\fraks}(r_1D_1+\cdots+r_nD_n)$ is given
in terms of the tropicalization of $W_\fraks$. Setting $r_1=\ldots =r_n=1$, one finds in particular that for $D_{FZ}=D_1+\cdots + D_n$ in fact
$$\Delta_{G_\fraks}(D_{FZ})=\{ v\in\RR^{k(n-k)} \; : \; \langle v,u\rangle \geq -1 \;
\textrm{for every vertex} \; u\in P_\fraks\} \quad ;$$
here $P_\fraks$ denotes the Newton polytope of the Laurent polynomial $W_\fraks$,
i.e. the convex hull of its exponents. To see this, observe that from \cite[Theorem 1.1]{RW}
and \cite[Definitions 10.7, 10.14]{RW} one has
$$v\in\Delta_{G_\fraks}(D_{FZ}) \iff \Trop(W_{i|T_\fraks})(v)\geq -1 \; \textrm{for} \; i=1,\ldots ,n \quad ;$$
here each $W_i$ is a special term of  a rational function $W=W_1+\ldots +W_n$ on $\Gr^\vee(k,n)$
defined in \cite[Definition 10.1]{RW}, and
$$T_\fraks = \{ \; [M]\in\Gr^\vee(k,n) \; : \; l(M)\neq 0 \; \forall l \textrm{ label of } Q_\fraks \; \}$$
is a complex torus chart such that $W_{|T_\fraks}=W_\fraks$; compare (1) of Proposition \ref{PropPluckerMutations}.
The symbol $\Trop(\cdot)$ denotes tropicalization of Laurent polynomials, which
produces a piece-wise linear function defined as
$$\Trop\left(\sum_u c_u x^u \right)(v) = \operatorname{min}_{u}\langle v, u\rangle \quad .$$
Also observe that
$$\Trop(W_{i|T_\fraks})(v)\geq -1 \; \textrm{for} \; i=1,\ldots n \quad
\iff \operatorname{min}_{i=1,\ldots ,n}\Trop(W_{i|T_\fraks})(v)\geq -1$$
$$\iff \Trop(W_{|T_\fraks})(v) \geq -1 \iff \Trop(W_\fraks)(v) \geq -1 \quad .$$
Summarizing, $v\in\Delta_{G_\fraks}(D_{FZ})$ is equivalent to
$\langle v, u\rangle \geq -1$ for every $u$ exponent of a monomial in $W_\fraks$.
By convexity, the latter condition is equivalent to asking $\langle v, u\rangle \geq -1$ only for
those $u$ that are vertices of the Newton polytope $P_\fraks$ of $W_\fraks$.
We have thus recovered the polar dual polytope, i.e. $\Delta_{G_\fraks}(D_{FZ})=P_\fraks^\circ$. Since the
normal fan of a polytope equals the face fan of its polar dual and polar duality
is an involution, we find that
$\Sigma_\fraks = \Sigma^n\Delta_{G_\fraks}(D_{FZ}) = \Sigma^f P_\fraks$ as in the statement.

\end{proof}

\begin{remark}
The toric variety $X(\Sigma_\fraks)$ depends only the final step of $\fraks$ in
the following sense. Suppose that $Q_\fraks$ and $Q_{\fraks'}$ are the final quivers of
two different Pl\"{u}cker sequences. Consider the charts
$$T_\fraks = \{ \; [M]\in\Gr^\vee(k,n) \; : \; l(M)\neq 0 \; \forall l \textrm{ label of } Q_\fraks \; \} \quad \textrm{and}$$
$$T_{\fraks'} = \{ \; [M]\in\Gr^\vee(k,n) \; : \; l'(M)\neq 0 \; \forall l' \textrm{ label of } Q_{\fraks'} \; \} \quad ;$$
these were already considered in (1) of Proposition \ref{PropPluckerMutations},
were it was observed that $W_\fraks = W_{|T_\fraks}$ and $W_{\fraks'} = W_{|T_\fraks}$.
If the quivers $Q_\fraks$ and $Q_\fraks'$ have equal sets of labels, then
$T_\fraks = T_{\fraks'}$ and therefore the final Laurent polynomials of the two
Pl\"{u}cker sequences are equal: $W_\fraks = W_{\fraks'}$. Since $\Sigma_\fraks$
and $\Sigma_{\fraks'}$ are the face fans of their Newton polytopes $P_\fraks=P_{\fraks'}$,
it follows that $\Sigma_{\fraks}=\Sigma_{\fraks'}$ and thus $X(\Sigma_{\fraks})=X(\Sigma_{\fraks'})$.
\end{remark}

In what follows, we endow the Grassmannian $\Gr(k,n)$ with the symplectic
structure obtained by restriction of the Fubini-Study form on the target
projective space of the Pl\"{u}cker embedding.

\begin{proposition}\label{PropPluckerTori}(Harada-Kaveh \cite[Theorem B]{HK})
Every Pl\"{u}cker sequence $\fraks$ of mutations of type $(k,n)$
has an associated Lagrangian torus $L_\fraks\subset\Gr(k,n)$, and it comes
with a canonical basis of $H_1(L_\fraks;\ZZ)$.
\end{proposition}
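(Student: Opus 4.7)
The plan is to apply Harada-Kaveh's gradient Hamiltonian flow \cite[Theorem B]{HK} to the toric degeneration furnished by Proposition \ref{PropPluckerDegenerations}. That proposition produces an embedded toric degeneration $\mathcal{X}\to\CC$ whose generic fibre is $\Gr(k,n)\subset\PP^{\binom{n}{k}-1}$, equipped with (the restriction of) the Fubini-Study symplectic form, and whose central fibre is the projective toric variety $X(\Sigma_\fraks)$ for $\Sigma_\fraks=\Sigma^fP_\fraks$. First I would invoke \cite[Theorem B]{HK}: flowing along the real part of the gradient of the base projection $p:\mathcal{X}\to\CC$ relative to a K\"ahler metric on the ambient $\PP^{\binom{n}{k}-1}\times\CC$ yields a surjective continuous map $\Phi:X(\Sigma_\fraks)\to\Gr(k,n)$ that restricts to a symplectomorphism from the smooth locus $X(\Sigma_\fraks)^{sm}$ onto its image.

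Next I would pick an interior regular value $m\in\mathrm{Int}(\Delta_\fraks)$ of the moment map $\mu$ of $X(\Sigma_\fraks)$, where $\Delta_\fraks=P_\fraks^\circ$ is the moment polytope associated to the natural projective polarisation. By Proposition \ref{PropNewtonFano} and Lemma \ref{LemmaToricFano}, $X(\Sigma_\fraks)$ is Fano and $\Delta_\fraks$ has full dimension $k(n-k)$, so such $m$ exists. Its preimage $\mu^{-1}(m)$ lies in the dense open torus orbit, hence in $X(\Sigma_\fraks)^{sm}$, and is a compact Lagrangian submanifold diffeomorphic to $(S^1)^{k(n-k)}$. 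I would then define
$$L_\fraks \;=\; \Phi\bigl(\mu^{-1}(m)\bigr)\;\subset\;\Gr(k,n)\quad.$$
Since $\Phi$ is a symplectomorphism on a neighbourhood of $\mu^{-1}(m)$, the image $L_\fraks$ is a Lagrangian torus of real dimension $k(n-k)=\dim_\CC\Gr(k,n)$.

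For the canonical basis of $H_1(L_\fraks;\ZZ)$, I would use that $\mu^{-1}(m)$ sits as a real subtorus of the dense algebraic torus $(\CC^\times)^{k(n-k)}\subset X(\Sigma_\fraks)$. This identifies $H_1(\mu^{-1}(m);\ZZ)$ canonically with the cocharacter lattice of that algebraic torus, which carries a standard $\ZZ$-basis; under the toric degeneration this lattice corresponds to the character lattice of the cluster chart $T_\fraks\subset\Gr^\vee(k,n)$ appearing in the proof of Proposition \ref{PropPluckerMutations}, whose coordinates are the labels of $Q_\fraks$. Pushing this basis forward by $\Phi_*$ yields the required basis of $H_1(L_\fraks;\ZZ)$.

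The only genuinely technical point is ensuring that the gradient Hamiltonian flow is nonsingular along the chosen fibre, so that $\Phi$ really is a symplectomorphism there; but this is automatic, because we arranged $\mu^{-1}(m)$ to lie in the smooth dense torus orbit of the Fano toric variety $X(\Sigma_\fraks)$, which is exactly the locus where Harada-Kaveh guarantee a diffeomorphism. Everything else amounts to packaging their theorem together with the specific degeneration from Proposition \ref{PropPluckerDegenerations}.
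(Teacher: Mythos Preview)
Your proposal is correct and follows essentially the same route as the paper: both invoke Harada--Kaveh \cite[Theorem B]{HK} on the toric degeneration of Proposition~\ref{PropPluckerDegenerations}, and your cocharacter-lattice description of the canonical basis agrees with the paper's symplectic-dual lift of the standard basis of $\RR^{k(n-k)}$ through the submersion $\mu_\fraks$.

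One small but consequential discrepancy: the paper packages Harada--Kaveh's output as an integrable system $\mu_\fraks:U_\fraks\to\RR^{k(n-k)}$ on an open $U_\fraks\subset\Gr(k,n)$ and \emph{specifically} sets $L_\fraks=\mu_\fraks^{-1}(0)$, whereas you take $\Phi(\mu^{-1}(m))$ for an arbitrary interior $m$. For the bare statement this is harmless, but the choice $v=0$ is what makes $L_\fraks$ monotone in Proposition~\ref{PropSmallMonotone} (equal areas for all Maslov-2 disks), so you should pin down $m=0$ rather than leave it free. Also, the gradient-Hamiltonian flow in \cite{HK} runs from the smooth fibre to the central fibre, so the continuous surjection goes $\Gr(k,n)\to X(\Sigma_\fraks)$ rather than the direction you wrote; this is immaterial on the smooth locus where it is invertible, but worth stating correctly.
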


\begin{proof}
For the reader's convenience, we provide details on how to specialize the result
of Harada-Kaveh \cite[Theorem B]{HK} to recover this statement. Recall from
Proposition \ref{PropPluckerDegenerations} that $\fraks$ determines a
degeneration of $\Gr(k,n)$ to the toric variety
associated with the polytope $r_\fraks\Delta_{G_\fraks}(D_{FZ})$, known as Okounkov body;
for a detailed description of the value semigroup underlying this Okounkov body
and of why it satisfies the assumptions of \cite[Theorem B]{HK}, see \cite[Definition 17.8, Lemma 17.9]{RW}.
From \cite[Theorem B]{HK} one deduces the existence of an open set $U_\fraks\subset\Gr(k,n)$
and a smooth submersion $\mu_\fraks : U_\fraks\to \RR^{k(n-k)}$ whose image
is the interior of the polytope, and whose fibers are Lagrangian tori. Call
$L_\fraks=\mu_\fraks^{-1}(0)$. If $p\in L_\fraks$, the tangent space to the fiber
at $p$ is $T_p(L_\fraks)=\operatorname{ker}(d_p\mu_\fraks)$. Therefore, the standard
basis of $\RR^{k(n-k)}$ lifts under $d_p\mu_\fraks$ to a basis of $T_p(U_\fraks)/T_p(L_\fraks)$. Since
the symplectic structure vanishes on $L_\fraks$, the lift defines
a symplectic-dual basis of $T_p(L_\fraks)$. Since $L_\fraks$ is a torus, the vectors of
this basis are tangent to natural closed loops in $L_\fraks$, and their homology classes give
a basis of $H_1(L_\fraks;\ZZ)$ which is independent of the point $p\in L_\fraks$.
\end{proof}

\begin{definition}\label{DefPluckerTorus}
The Lagrangians $L_\fraks\subset\Gr(k,n)$ of Proposition \ref{PropPluckerTori}
are called Pl\"{u}cker tori, and the elements $\gamma_d\in H_1(L_\fraks;\ZZ)$ of
the canonical basis are called canonical cycles. We index the canonical cycles
by Young diagrams $d\subseteq k\times(n-k)$ such that $d\neq\emptyset$
and $d$ appears in a label $x_d$ of the final quiver $Q_\fraks$.
\end{definition}

\begin{remark}
Note that $x_\emptyset$ is the label of a frozen node in the initial quiver $Q_0$.
Since frozen labels do not change under mutation, $x_\emptyset$ is in fact a the
label of a forzen node in any quiver $Q_\fraks$.
\end{remark}

\subsection{Local systems from Schur polynomials}

Given a Young diagram $d\subseteq k\times (n-k)$, the corresponding $k$-variables
Schur polynomial is defined as
$$S_d(z_1,\ldots z_k)=\sum_{T_d}z_1^{t_1}\cdots z_k^{t_k} \quad ,$$
where the sum runs over semi-standard tableaux $T_d$ on $d$. The tableaux $T_d$ are obtained by labeling $d$
with integers in $\{1,\ldots k\}$, in such a way that rows are weakly increasing
and columns are strictly increasing. The exponent $t_i$ is the number of times
that the integer $i$ appears in the tableaux $T_d$.
If $I$ is any of the ${n \choose k}$ sets of roots of $z^n=(-1)^{k+1}$
with $|I|=k$, it makes sense to evaluate $S_d(I)\in\CC$ without specifying an
order on the elements of $I$ because Schur polynomials are symmetric functions.

\begin{definition}\label{DefLocalSystems}
For each Pl\"{u}cker torus $L_\fraks\subset\Gr(k,n)$ and set $I$, denote $\xi_I$ the
rank one local system whose holonomy $\hol_{\xi_I}:H_1(L_\fraks;\ZZ)\to\CC^\times$
around the canonical cycles of Definition \ref{DefPluckerTorus} is given by the formula
$$\hol_{\xi_I}(\gamma_d)=S_d(I) \in \CC^\times \quad ;$$
if $S_d(I)=0$ for some $d$ appearing in a label $x_d$ of the final quiver of $\fraks$,
then $\xi_I$ is not defined.
\end{definition}

\subsection{A conjecture and some evidence}

If $\fraks$ is a Pl\"{u}cker sequence of type $(k,n)$, after setting
$x_\emptyset = 1$ the Laurent polynomial $W_\fraks$ can be thought of as a regular
function on the algebraic torus $H_1(L_\fraks ; \ZZ)\otimes\CC^\times\cong (\CC^\times)^{k(n-k)}$.
Setting $x_\emptyset = 1$ corresponds to thinking $\mathcal{A}_{k,n}=\mathcal{O}(U_{k,n})$
as algebra of regular functions on $U_{k,n}=\Gr^\vee(k,n)\setminus D^\vee_{FZ}$
rather than on its affine cone.

The canonical cycles $\gamma_d\in H_1(L_\fraks;\ZZ)$ of Definition \ref{DefPluckerTorus}
give an isomorphism of schemes $H_1(L_\fraks;\ZZ)\otimes\CC^\times\cong (\CC^\times)^{k(n-k)}$,
where one thinks the latter torus as having coordinates $x_d$ labeled by
Young diagrams $d\subseteq k\times (n-k)$ such that $d\neq\emptyset$ and $d$
appears in some label of the quiver $Q_\fraks$.
Under the identification described by Scott \cite[Theorem 4]{Sc}, one can think
that $H_1(L_\fraks;\ZZ)\otimes\CC^\times\cong T_\fraks\subset\Gr^\vee(k,n)$ where
$$T_\fraks = \{ [M]\in\Gr^\vee(k,n) \; : \; x_d(M)\neq 0 \; \forall d \; \textrm{label of} \; Q_\fraks \} \quad .$$

\begin{conjecture}\label{ConjPotentials}
If $\fraks$ and $\fraks'$ are two Pl\"{u}cker sequences of type $(k,n)$, and
$\phi$ is a Hamiltonian isotopy of $\Gr(k,n)$ such that $\phi(L_\fraks)=L_{\fraks'}$,
then the induced map $\phi_*: H_1(L_\fraks;\ZZ)\to H_1(L_{\fraks'};\ZZ)$ is such that
$$W_{\fraks} \sim W_{\fraks'} \circ (\phi_* \otimes \operatorname{id}_{\CC^\times}) \quad ,$$
where $\sim$ denotes equality up to automorphisms of $T_\fraks$.
\end{conjecture}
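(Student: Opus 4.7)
My plan is to reduce Conjecture \ref{ConjPotentials} to the identification $W_\fraks = W_{L_\fraks}$ between the algebraic Laurent polynomial produced by the Pl\"{u}cker sequence $\fraks$ and the disk potential of the associated Lagrangian torus. Once this identity is granted, the conjecture follows from the standard Hamiltonian invariance of monotone disk potentials recalled in the introduction (\cite{CO}, \cite{AuT}, \cite{Sh}): a Hamiltonian isotopy $\phi$ with $\phi(L_\fraks) = L_{\fraks'}$ intertwines $W_{L_\fraks}$ and $W_{L_{\fraks'}}$ via the induced map $\phi_* : H_1(L_\fraks;\ZZ) \to H_1(L_{\fraks'};\ZZ)$, and the freedom in the choice of canonical basis of cycles coming from the Harada-Kaveh construction accounts precisely for the ambiguity up to automorphisms of $T_\fraks$ appearing in the statement.

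\textbf{Anchor and direct case.} The identification $W_0 = W_{L_0}$ for the initial seed is already established in \cite{Ca}, where $L_0$ is recognized as the monotone Gelfand-Cetlin torus and $W_0$ as its disk potential. For any $\fraks$ such that the singular toric variety $X(\Sigma_\fraks)$ admits a small toric resolution in the sense of Definition \ref{DefSmallResolution}, the equality $W_\fraks = W_{L_\fraks}$ together with monotonicity of $L_\fraks$ is delivered by the toric degeneration result of \cite{NNU}, so that the conjecture is already known in that regime. The substantive content of the conjecture therefore concerns sequences $\fraks$ whose terminal toric variety fails to admit such a resolution.

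\textbf{Propagation and main obstacle.} To handle the remaining cases, the natural inductive approach is to match each algebraic mutation step $(Q_i, W_i) \to (Q_{i+1}, W_{i+1})$ of Definition \ref{DefMutation} with a geometric Lagrangian mutation $L_i \rightsquigarrow L_{i+1}$ in the sense of \cite{PT}. Concretely, one would construct a one-parameter family of Lagrangian immersions interpolating between $L_i$ and $L_{i+1}$, identify the Maslov $0$ $J$-holomorphic disks appearing at the wall-crossing time, and verify that their count produces exactly the birational factor $(\prod_{w\to v} x_{l(w)} + \prod_{v\to w} x_{l(w)}) x_{l(v)}^{-1}$ prescribed by quiver mutation; iterating from the anchor $L_0$ would then give $W_\fraks = W_{L_\fraks}$ along the entire sequence. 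The hard part will be the construction and control of these families and their wall-crossing disks beyond the small-resolution regime, where neither the Harada-Kaveh degeneration nor \cite{NNU} applies. An attractive but technically demanding alternative is to upgrade the heuristic of \cite{GHKK} by interpreting the broken lines defining the global theta functions of the Landau-Ginzburg potential $W\in\mathcal{A}_{k,n}$ as genuine counts of $J$-holomorphic disks with boundary on $L_\fraks$, perhaps through the low-area Floer theory of \cite{TV}; this would give the identification $W_\fraks = W_{L_\fraks}$ directly on each chart, bypassing the step-by-step matching of mutations.
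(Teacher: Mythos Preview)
The statement you are attempting to prove is a \emph{conjecture} in the paper, not a theorem; the paper does not contain a proof of it in general. What the paper does establish is exactly the reduction you describe: whenever the identification $W_\fraks = W_{L_\fraks}$ can be shown (via Proposition~\ref{PropSmallMonotone} under a small-resolution hypothesis, or unconditionally for $k=2$ via Lemma~\ref{LemmaK2}), Hamiltonian invariance of the disk potential yields the conjecture immediately, with the $\sim$ ambiguity explained in the Remark following the statement. So your ``anchor and direct case'' paragraph is a faithful summary of what the paper actually proves.

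The gap is in your ``propagation'' paragraph, and it is a genuine one: the matching of each quiver mutation with a geometric Lagrangian mutation in the sense of \cite{PT}, together with the identification of the wall-crossing Maslov~$0$ disk count with the exchange factor, is not carried out in the paper and is explicitly left open (see the final subsection of Section~\ref{SecIntro} on algebraic versus topological wall-crossing). Likewise, the broken-line/low-area alternative you sketch is mentioned in the paper only as a heuristic and a direction for future work, not as an available argument. In short, your proposal correctly isolates the obstruction and names the two natural attack routes, but neither route is completed here or in the paper; what you have written is a proof \emph{plan} for an open problem rather than a proof, and you should present it as such.
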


\begin{remark}
The reason for $\sim$ in the conjecture above is the following. Suppose that
$W_\fraks$ is the disk potential of $L_\fraks$, i.e. $W_\fraks = W_{L_\fraks}$.
By Hamiltonian invariance of the disk potential, if $L_{\fraks'}=\phi(L_\fraks)$
then $W_{L_{\fraks'}}=W_{L_\fraks}=W_\fraks$ as long as we express the disk
potential of $L_{\fraks'}$ in the basis of cycles induced by $\phi_*: H_1(L_\fraks;\ZZ)\to H_1(L_{\fraks'};\ZZ)$.
Instead, the Laurent polynomial $W_{\fraks'}$ expresses the disk potential
of $L_{\fraks'}$ in the canonical basis of cycles of Definition \ref{DefPluckerTorus},
which is a priori different from the one induced by $\phi_*$.
\end{remark}

Under some assumptions on the singularities of the toric varieties $X(\Sigma_\fraks)$
appearing as limits of the degenerations $\Gr(k,n)\rightsquigarrow X(\Sigma_\fraks)$,
the conjecture above can be verified. We describe below how, and give some sample
applications in Section \ref{SecApplications}.

\begin{definition}\label{DefToricResolution}
If $X(\Sigma)$ is a projective toric variety, a toric resolution consists of a
smooth projective toric variety $X(\widetilde{\Sigma})$ with a toric morphism
$r:X(\widetilde{\Sigma})\to X(\Sigma)$ which is a birational equivalence.
\end{definition}

Any toric variety $X(\Sigma)$ has a toric resolution;
see for example \cite[Chapter 11]{CLS}. Toric resolutions can be constructed by
taking refinements $\widetilde{\Sigma}$ of the fan $\Sigma$, which have natural
associated morphisms $r$.
The refined fan $\widetilde{\Sigma}$ has in general more rays than $\Sigma$, and these
correspond to torus invariant divisors in the exceptional locus $r^{-1}(\Sng X(\Sigma))$.

\begin{definition}\label{DefSmallResolution}
A toric resolution $r:X(\widetilde{\Sigma})\to X(\Sigma)$ is small if
$\widetilde{\Sigma}$ and $\Sigma$ have the same rays. Being small is equivalent to
$\operatorname{codim}_{\CC}(r^{-1}(\Sng X(\Sigma)))\geq 2$; see for example \cite[Proposition 11.1.10]{CLS}.
\end{definition}

\begin{proposition}\label{PropSmallMonotone}
If $\fraks$ is a Pl\"{u}cker sequence of type $(k,n)$, and the toric variety $X(\Sigma_\fraks)$
admits a small resolution, then $L_\fraks\subset\Gr(k,n)$ is monotone and has disk
potential $W_\fraks$ with respect to the basis of canonical cycles for $H_1(L_\fraks; \ZZ)$.
\end{proposition}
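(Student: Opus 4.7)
The plan is to derive the conclusion as a direct application of the Nishinou--Nohara--Ueda comparison theorem \cite{NNU}, using the toric degeneration of Proposition \ref{PropPluckerDegenerations} as the bridge between $\Gr(k,n)$ and the combinatorial side. By Proposition \ref{PropPluckerDegenerations}, $\fraks$ determines an embedded degeneration $\Gr(k,n)\rightsquigarrow X(\Sigma_\fraks)$ with $\Sigma_\fraks=\Sigma^fP_\fraks$, and by Proposition \ref{PropNewtonFano} the Newton polytope $P_\fraks$ is Fano. The Lagrangian $L_\fraks=\mu_\fraks^{-1}(0)$ constructed in Proposition \ref{PropPluckerTori} sits over the origin of the Okounkov body $\Delta_{G_\fraks}(D_{FZ})=P_\fraks^\circ$. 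Since every facet of $P_\fraks^\circ$ has the form $\{v : \langle v,u\rangle=-1\}$ for some vertex $u\in V(P_\fraks)$, the origin has equal affine distance $1$ from each facet; this is precisely the standard criterion for the corresponding torus fiber of $X(\Sigma_\fraks)$ to be monotone with respect to the polarization induced by $D_{FZ}$.

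Next, I would invoke \cite{NNU}. When $X(\Sigma_\fraks)$ admits a small toric resolution $r:X(\widetilde{\Sigma}_\fraks)\to X(\Sigma_\fraks)$, the fan $\widetilde{\Sigma}_\fraks$ shares its rays with $\Sigma_\fraks$, so the resolution leaves the vertices of $P_\fraks$ untouched and only subdivides higher-dimensional cones. The main theorem of \cite{NNU} then identifies the disk potential of $L_\fraks\subset\Gr(k,n)$ with that of the monotone toric fiber in the smooth Fano $X(\widetilde{\Sigma}_\fraks)$, via a weighted count that groups Cho--Oh disks through boundary divisors together with contributions from Maslov-zero configurations passing through the exceptional locus $r^{-1}(\Sng X(\Sigma_\fraks))$ (which has complex codimension at least two by Definition \ref{DefSmallResolution}). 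Monotonicity of $L_\fraks$ descends from the smooth toric side, since it is an open condition preserved along the Kähler family underlying the degeneration, and the toric fiber over $0$ is monotone. Call $W^{\mathrm{toric}}$ the resulting Laurent polynomial, expressed in the canonical basis of Definition \ref{DefPluckerTorus}.

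It remains to identify $W^{\mathrm{toric}}$ with $W_\fraks$. The cleanest route is to reinterpret $W^{\mathrm{toric}}$ as the restriction of the global Marsh--Rietsch potential $W\in\mathcal{A}_{k,n}$ to the cluster torus chart $T_\fraks\subset\Gr^\vee(k,n)$. On one hand, \cite{NNU} expresses $W^{\mathrm{toric}}$ as a sum of monomials indexed by the vertices of $P_\fraks$ (ray generators of $\Sigma_\fraks$) plus correction terms whose exponents are lattice points in the non-vertex faces, and the Rietsch--Williams identification $\Delta_{G_\fraks}(D_{FZ})=P_\fraks^\circ$ from Proposition \ref{PropPluckerDegenerations} matches these exponents with those in $W_\fraks$. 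On the other hand, by Proposition \ref{PropPluckerMutations}(1), $W_\fraks$ itself is the restriction $W_{\vert T_\fraks}$, so both potentials are the pullback of the same cluster-algebraic object. The main obstacle is making this coefficient-by-coefficient match rigorous: one needs to check that the combinatorial disk count supplied by \cite{NNU} on the small resolution agrees with the plabic-graph matching generating function of \cite[Theorem 18.2]{RW} that computes $W_\fraks$. This is ultimately a bookkeeping comparison, but it is the step where the precise enumerative content of NNU must be aligned with the Rietsch--Williams combinatorics; everything else reduces to the preceding propositions.
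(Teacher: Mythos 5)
Your overall strategy is the same as the paper's (run the toric degeneration of Proposition \ref{PropPluckerDegenerations} through the Nishinou--Nohara--Ueda theorem), but the monotonicity step has a genuine gap. The assertion that monotonicity of $L_\fraks$ ``descends from the smooth toric side, since it is an open condition preserved along the K\"ahler family'' is not a valid argument: monotonicity of $L\subset X$ is the requirement that $\omega(\beta)$ be proportional to $\mu(\beta)$ for \emph{every} $\beta\in H_2(X,L;\ZZ)$, which is a system of closed linear conditions, not an open one, and it does not transfer across a degeneration for free because the ambient space --- hence the relative homology group and the set of holomorphic disks --- changes in the limit. This is exactly where the small resolution hypothesis does its work for monotonicity in the paper's proof: one uses the Harada--Kaveh symplectomorphism $\psi:U_\fraks\to X(\Sigma_\fraks)\setminus D_{\Sigma_\fraks}$ and its continuous extension $\overline{\psi}$ to $\Gr(k,n)$, together with \cite[Lemma 9.2, Corollary 9.3]{NNU}, to identify the disk classes in $H_2(\Gr(k,n),L_\fraks;\ZZ)$ with classes of disks in the small resolution $X(\widetilde{\Sigma_\fraks})$ bounded by the corresponding toric fiber; these are generated by Maslov index $2$ classes, whose areas are $2\pi(\langle v,u\rangle + r_\fraks)$ by Cho--Oh, and the choice $v=0$ makes all of them equal. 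Your proposal never makes the comparison of relative homology groups, so the equidistance of the origin from the facets of $P_\fraks^\circ$ proves monotonicity only of the toric fiber, not of $L_\fraks\subset\Gr(k,n)$.

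A secondary point: you misstate the content of the NNU theorem in the small-resolution case. There are no ``correction terms whose exponents are lattice points in the non-vertex faces'' and no contributions from Maslov-zero configurations through the exceptional locus; the theorem says precisely that the disk potential is the clean sum of one monomial $x^u$, with coefficient $1$, for each facet of the moment polytope, i.e.\ for each vertex $u$ of $P_\fraks$. This makes the final identification with $W_\fraks$ more direct than the dimer-count comparison you propose: what has to be observed is that $W_\fraks$ has all coefficients equal to one and that its monomial exponents are exactly the vertices of its Newton polytope, which is how the paper reads off $W_{L_\fraks}=W_\fraks$ from the Rietsch--Williams identification $\Delta_{G_\fraks}(D_{FZ})=P_\fraks^\circ$.
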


\begin{proof}
Recall from Proposition \ref{PropPluckerTori} that there is a smooth submersion
$\mu_\fraks : U_\fraks \to \RR^{k(n-k)}$ with Lagrangian
torus fibers, defined on some open set $U_\fraks\subset\Gr(k,n)$. If $P_\fraks$ is
the Newton polytope of the Laurent polynomial $W_\fraks$, the image of this map
is the interior of the polytope described in Proposition \ref{PropPluckerDegenerations}:
$$r_\fraks\Delta_{G_\fraks}(D_{FZ})=\{ v\in\RR^{k(n-k)} \; : \; \langle v,u\rangle \geq -r_\fraks \;
\textrm{for every vertex} \; u\in P_\fraks\} \quad .$$
Call $v$ a point in the interior, and $L_\fraks(v)=\mu_\fraks^{-1}(v)$ the corresponding
Lagrangian torus fiber. Observe that $X(\Sigma_\fraks)$ is Fano, thanks to
Proposition \ref{PropNewtonFano}. The assumption that $X(\Sigma_\fraks)$ has a small toric
resolution allows to invoke a theorem of Nishinou-Nohara-Ueda \cite[Theorem 10.1]{NNU},
and conclude that the disk potential of $L_\fraks(v)\subset\Gr(k,n)$ has one monomial
for each facet $\langle v,u\rangle = -r_\fraks$ of $r_\fraks\Delta_{G_\fraks}(D_{FZ})$,
with exponent $u\in H_1(L_\fraks(v);\ZZ)$.
The subgroup of disk classes in $H_2(\Gr(k,n),L_\fraks(v);\ZZ)$ is
generated by Maslov index 2 classes. This holds because $U_\fraks\subset\Gr(k,n)$
is the domain of a symplectomporhism $\psi : U_\fraks \to X(\Sigma_\fraks)\setminus D_{\Sigma_\fraks}$
with the maximal torus orbit of the toric variety $X(\Sigma_\fraks)$ obtained
by removing the standard torus invariant divisor $D_{\Sigma_\fraks}$. Harada-Kaveh \cite[Theorem A (1)]{HK}
shows that $\psi$ extends to a continuous map $\overline{\psi}:\Gr(k,n)\to X(\Sigma_\fraks)$.
As explained in Nishinou-Nohara-Ueda \cite[Lemma 9.2, Corollary 9.3]{NNU}, the
assumption that $X(\Sigma_\fraks)$ has a small resolution $X(\widetilde{\Sigma_\fraks})$
allows to use the map $\overline{\psi}$ to identify disk classes in $H_2(\Gr(k,n),L_\fraks(v);\ZZ)$ with
classes of disks in $X(\widetilde{\Sigma_\fraks})$ with boundary on the toric moment fiber over $v$,
and these are generated by Maslov index 2 classes; see Cho-Oh \cite[Theorem 5.1]{CO}
for a general formula computing the Maslov index of disks with boundary in a toric
moment fiber.
In conclusion, $L_\fraks(v)$ is monotone if and only if all Maslov index 2 classes
have the same symplectic area. The symplectic area of a Maslov 2 disk with boundary $u$ is 
$2\pi(\langle v,u\rangle + r_\fraks) ;$
see Cho-Oh \cite[Theorem 8.1]{CO} for a proof. The choice $v=0$ guarantees that
all the areas are equal, and thus $L_\fraks=L_\fraks(0)$ is monotone.
\end{proof}

\section{Sample applications}\label{SecApplications}

We describe some sample applications of what seen so far to the symplectic topology
of Grassmannians. These results are by no means optimal; they are meant
to illustrate new phenomena, and we give some indications on how one can prove
analogous statements using the same techniques.

\subsection{Generating the Fukaya category of $\Gr(2,n)$}

In this subsection we focus on Grassmannians of planes $\Gr(2,n)$. For this
class of Grassmannians, Nohara-Ueda \cite{NU14, NU20} have used symplectic
reduction techniques to construct a Catalan number $C_{n-2}$ of integrable systems
on $\Gr_2(n)$, each labeled by a triangulation $\Gamma$ of the $n$-gon; the generic fibers
of these systems are Lagrangian tori $L_\Gamma\subset\Gr_2(n)$, and the images
of their Hamiltonians are lattice polytopes $\Delta_\Gamma$. Explicit formulas
for the disk potentials $W_{L_\Gamma}$ were given as sums over edges of the
triangulation $\Gamma$. We compare this construction with the case $k=2$ of
our Construction \ref{ConTori}, and explore some consequences for the Fukaya
category of $\Gr(2,n)$.
When $k=2$, the Pl\"{u}cker coordinates appearing as labels of a quiver $Q_\fraks$ have a
simple combinatorial description.

\begin{definition}\label{DefTriangulation}
Let $n>2$, and consider $n$ points on $S^1$, labeled counter-clockwise from $1$ to $n$.
A triangulation $\Gamma$ of $[n]$ is a collection of subsets $\{i,j\}\subset[n]$
with $i\neq j$, such that connecting $i$ and $j$ with an arc in $D^2$ for all
$\{i,j\}\in\Gamma$ one gets a triangulation of the $n$-gon with vertices at $[n]$.
\end{definition}

\begin{remark}
Every triangulation $\Gamma$ of $[n]$ must contain the $n$ sets
$$\{1,2\} \; , \; \{2,3\} \; , \; \ldots \; , \;\{n-1,n\} \; , \; \{1,n\} \quad ;$$
these correspond to the edges of the $n$-gon; the other sets in $\Gamma$
correspond to interior edges of the triangulation. Note that the $n$ sets
above are also the vertical steps $d^|$ of those Young diagrams $d\subseteq 2\times (n-2)$
that label the frozen nodes in Definition \ref{DefInitialSeed} (specialized to $k=2$).
\end{remark}

\begin{lemma}(Fomin-Zelevinsky \cite[Proposition 12.5]{FZ2}, \cite[Theorem 1.6]{OPS})\label{LemmaTriangClusters}
A collection of Young diagrams $d\subseteq 2\times(n-2)$ labels
the nodes of $Q_\fraks$ for some Pl\"{u}cker sequence $\fraks$ of type $(2,n)$
if and only if the set $\Gamma = \{ d^| \subset [n] \}$ is a triangulation of $[n]$.
\end{lemma}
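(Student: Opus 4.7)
The plan is to set up a bijection $d \mapsto d^|$ between Young diagrams $d \subseteq 2\times(n-2)$ and $2$-element subsets of $[n]$, and show that under this bijection Pl\"{u}cker sequences correspond exactly to sequences of diagonal flips on triangulations of the $n$-gon starting from the fan triangulation. The argument splits into the obvious base case, a forward induction showing that each mutation in a Pl\"{u}cker sequence is a flip, and a backward converse based on connectedness of the flip graph.

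For the base case, since $k=2$ one has $|d^|| = 2$ for every diagram, and a direct count shows that $d \mapsto d^|$ is a bijection onto the $\binom{n}{2}$ two-element subsets of $[n]$. Unpacking the profile path for each rectangular diagram $i \times j$ appearing in Definition \ref{DefInitialSeed}, one checks that the $n$ frozen labels of $Q_0$ correspond to the $n$ boundary edges of the $n$-gon, while the $n-3$ mutable labels correspond to the interior diagonals of a fan triangulation $\Gamma_0$ rooted at a single vertex. So the claim holds at $\fraks = (Q_0, W_0)$.

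For the forward direction, I would argue by induction on the length of $\fraks$, assuming the labels of $Q_\fraks$ correspond to some triangulation $\Gamma$. Let $v$ be a mutable $4$-valent node with label $x_d$ and $d^| = \{a,c\}$. The planar combinatorics of $\Gamma$ together with the quiver structure force the four neighbors of $v$ to carry the labels indexed by $\{a,b\}, \{b,c\}, \{c,d\}, \{a,d\}$, where $a<b<c<d$ are the vertices of the unique quadrilateral of $\Gamma$ with diagonal $\{a,c\}$. By Proposition \ref{PropPluckerMutations}(2) the new label is a Pl\"{u}cker coordinate $x_{d'}$, and the exchange relation of Definition \ref{DefMutation} becomes the Ptolemy identity
$$x_{\{a,c\}}\, x_{d'^|} \;=\; x_{\{a,b\}}\,x_{\{c,d\}} \;+\; x_{\{b,c\}}\,x_{\{a,d\}},$$
whose unique Pl\"{u}cker solution is $d'^| = \{b,d\}$, the other diagonal of the quadrilateral. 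Thus mutation implements a diagonal flip on $\Gamma$. For the backward direction, given an arbitrary triangulation $\Gamma$ of $[n]$ I would use the classical fact that the flip graph of triangulations of the $n$-gon (the $1$-skeleton of the associahedron) is connected, producing a sequence $\Gamma_0 \to \Gamma_1 \to \cdots \to \Gamma_l = \Gamma$ of flips. Each flip lifts to a quiver mutation at the node labeled by the diagonal being flipped, and running Step 2 in reverse assembles these mutations into a Pl\"{u}cker sequence $\fraks$ with label set matching $\Gamma$.

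The main obstacle is the following consistency check: at every intermediate stage of the sequence, the mutable node labeled by an interior diagonal must actually be $4$-valent with the four neighbors matching the sides of the surrounding quadrilateral, so that the mutation is admissible in the sense of Definition \ref{DefPluckerSequence} and the exchange relation really collapses to Ptolemy. This compatibility between quiver valence and triangulation combinatorics is precisely the content of the cited results: it can be extracted from the type $A_{n-3}$ cluster algebra picture of Fomin-Zelevinsky, or equivalently from the square-move description of plabic graphs for $\Gr(2,n)$ developed by Oh-Postnikov-Speyer, and would be the step requiring the most care if one wished to spell the argument out in full.
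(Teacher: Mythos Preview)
Your argument is correct, but it takes a different route from the paper. The paper's proof is a direct three-line citation: label sets of $Q_\fraks$ are exactly the clusters of $\mathcal{A}_{2,n}$; by Oh--Postnikov--Speyer these are the maximal weakly separated collections of $2$-subsets of $[n]$; and for $k=2$ the notion of maximal weakly separated collection specializes to that of triangulation. No induction, no flips, no Ptolemy.

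Your approach instead unwinds the type $A_{n-3}$ picture explicitly: you identify the initial seed with the fan triangulation, show each admissible mutation is a Ptolemy flip, and invoke connectedness of the associahedron's $1$-skeleton for the converse. This is more hands-on and has the advantage of actually producing the Pl\"{u}cker sequence $\fraks$ reaching a given triangulation, which the paper's proof does not do (though the paper later uses exactly this in the proof of Theorem~\ref{ThmDyadicHMS}, so your version makes that step more transparent). The cost is the consistency check you flag at the end --- that every interior-diagonal node is $4$-valent with the correct neighbors throughout the sequence --- which is genuinely the substance of the lemma and which you, like the paper, ultimately defer to the cited sources. So both proofs rest on the same external input, but packaged differently: the paper quotes the classification theorem wholesale, while you quote only the local quiver--triangulation compatibility and reassemble the global statement by hand.
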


\begin{proof}
A set of Pl\"{u}cker coordinates $x_d$ labels the nodes of some quiver $Q_\fraks$ precisely
if it is a cluster in the cluster algebra structure of $\mathcal{A}_{2,n}$. By Oh-Postnikov-Speyer
\cite[Theorem 1.6]{OPS}, this is equivalent to saying that the sets $d^|$ of vertical steps of the corresponding
Young diagrams $d\subseteq 2\times (n-2)$ form a maximal weakly separated collection
in the sense of \cite[Definition 3.1]{OPS}. For $k=2$ the general notion of
maximal weakly separated collection recovers the one of triangulation given in
Definition \ref{DefTriangulation}.
\end{proof}

\begin{lemma}(Nohara-Ueda \cite[Theorem 1.5]{NU14} and \cite[Theorem 1.1]{NU20})\label{LemmaK2}
If $k=2$ then the Lagrangian tori $L_\fraks\subset \Gr(2,n)$ are monotone for all $\fraks$ and have disk
potential $W_\fraks$. Consequently, by Hamiltonian invariance of the disk potential,
Conjecture \ref{ConjPotentials} holds.
\end{lemma}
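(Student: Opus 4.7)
The plan is to reduce the statement to the theorems of Nohara--Ueda by identifying our Pl\"ucker tori $L_\fraks \subset \Gr(2,n)$ with the Lagrangian tori $L_\Gamma$ they associate to triangulations of the $n$-gon. The combinatorial bridge is already provided by Lemma \ref{LemmaTriangClusters}: the assignment $\fraks \mapsto \Gamma_\fraks := \{d^| \subset [n] : x_d \text{ labels a node of } Q_\fraks\}$ is a bijection between Pl\"ucker sequences of type $(2,n)$ (up to their final quiver) and triangulations of $[n]$. So the first step is to fix this bijection and use it to set up a comparison between the two constructions term by term.

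Next I would match the underlying toric degenerations. Proposition \ref{PropPluckerDegenerations} gives $\Gr(2,n) \rightsquigarrow X(\Sigma_\fraks)$ with $\Sigma_\fraks = \Sigma^f P_\fraks$, and Proposition \ref{PropPluckerDegenerations} also identifies the polar dual $P_\fraks^\circ$ with the Okounkov body $\Delta_{G_\fraks}(D_{FZ})$ cut out by the tropicalization of $W_\fraks$. On the Nohara--Ueda side, $\Gamma$ determines a toric variety $X_\Gamma$ with polytope $\Delta_\Gamma$ whose facets correspond to edges of $\Gamma$. The task is to produce a unimodular identification $P_\fraks^\circ \cong \Delta_{\Gamma_\fraks}$ that is compatible with the Pl\"ucker embedding. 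For the rectangle triangulation this holds because the initial potential $W_0$ of Definition \ref{DefInitialSeed} has the same monomials as the Gelfand--Cetlin potential for $\Gr(2,n)$ computed in \cite{NU14}; in general it follows because both constructions transform under a square move/quiver mutation by the same combinatorial rule on edges/diagonals, so the polytopes remain identified throughout any Pl\"ucker sequence.

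With the degenerations identified, the Harada--Kaveh Lagrangian torus $L_\fraks$ sits in the open orbit of $X(\Sigma_\fraks) = X_{\Gamma_\fraks}$ exactly as $L_{\Gamma_\fraks}$ does, so $L_\fraks$ and $L_{\Gamma_\fraks}$ agree as symplectic submanifolds, and the canonical bases of $H_1$ correspond under the matching of polytope facets. Nohara--Ueda's \cite[Theorem 1.5]{NU14} and \cite[Theorem 1.1]{NU20} then give monotonicity of $L_{\Gamma_\fraks}$ together with an explicit formula for its disk potential $W_{L_{\Gamma_\fraks}}$ as a sum of monomials indexed by the edges of $\Gamma_\fraks$. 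Comparing that formula with the Laurent polynomial $W_\fraks$ produced by the iterative construction term by term yields $W_\fraks = W_{L_\fraks}$. (As a sanity check, this is consistent with Proposition \ref{PropSmallMonotone}, since for $k=2$ the toric varieties $X(\Sigma_\fraks)$ admit small resolutions by a standard argument on the singularities of Grassmannian string polytopes.)

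Finally, once $W_\fraks = W_{L_\fraks}$ for every $\fraks$, Conjecture \ref{ConjPotentials} is immediate from Hamiltonian invariance of the disk potential: if $\phi(L_\fraks) = L_{\fraks'}$, then $W_{L_{\fraks'}} = W_{L_\fraks} \circ (\phi_*^{-1} \otimes \operatorname{id})$ in any basis, and substituting $W_{L_\fraks} = W_\fraks$ and $W_{L_{\fraks'}} = W_{\fraks'}$ gives the claimed relation up to the torus automorphism accounting for the change between the canonical basis on $L_{\fraks'}$ and the $\phi_*$-pushed basis. The main technical obstacle in the plan is the second step: verifying that the Newton polytopes $P_\fraks$ coincide with Nohara--Ueda's polytopes $\Delta_{\Gamma_\fraks}$ under a single unimodular change of coordinates, compatibly over all quiver mutations/square moves. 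This requires tracking both constructions through a common mutation step and checking that the exchange relation on Pl\"ucker coordinates induces the same polytope mutation in the sense of Akhtar--Coates--Galkin--Kasprzyk that Nohara--Ueda obtain from their symplectic reduction description.
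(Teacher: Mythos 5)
Your main route has a gap at the step where you assert that $L_\fraks$ and the Nohara--Ueda torus $L_{\Gamma_\fraks}$ ``agree as symplectic submanifolds.'' Matching the polytopes $P_\fraks^\circ$ and $\Delta_{\Gamma_\fraks}$ (even unimodularly, compatibly with all mutations) does not identify the actual Lagrangian tori: the Harada--Kaveh torus is produced by the gradient-Hamiltonian flow of an embedded toric degeneration, while Nohara--Ueda's tori come from a symplectic-reduction/bending-system construction, and two Lagrangians sitting over the same point of isomorphic polytopes need not coincide as subsets of $\Gr(2,n)$. Since your plan is to transfer monotonicity and the disk-potential formula from $L_{\Gamma_\fraks}$ to $L_\fraks$ via this identification, the argument as written does not close. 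The difficulty you flag as the ``main technical obstacle'' (tracking both polytope mutations through a common square move) is also harder than necessary and is not how the paper proceeds.

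The paper's proof is in fact the content of your parenthetical ``sanity check,'' which you dismiss as following from ``a standard argument'': by Proposition \ref{PropSmallMonotone} it suffices to show that $X(\Sigma_\fraks)$ admits a small toric resolution, and this is exactly what Nohara--Ueda supply. Concretely, their open embedding $\iota_{\Gamma_\fraks}$ has image the cluster chart $T_\fraks$ and pulls back the Marsh--Rietsch potential $W$ to $W_{\Gamma_\fraks}$, while $W_\fraks = W_{\vert T_\fraks}$; hence $W_{\Gamma_\fraks}$ and $W_\fraks$ differ by a torus automorphism, their Newton polytopes are $GL(2(n-2),\ZZ)$-equivalent, and the small toric resolution of $X(\Sigma^f\Newt(W_{\Gamma_\fraks}))$ established in \cite[Theorem 1.5]{NU14} transfers to $X(\Sigma_\fraks)$. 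This avoids any claim about the tori themselves coinciding and requires no term-by-term comparison of potentials. Your final step deducing Conjecture \ref{ConjPotentials} from Hamiltonian invariance of the disk potential is correct and matches the paper, but it only becomes available once $W_\fraks = W_{L_\fraks}$ is established by the route above.
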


\begin{proof}
In view of Proposition \ref{PropSmallMonotone}, it suffices to prove that
for any Pl\"{u}cker sequence $\fraks$ of type $(2,n)$ the
toric variety $X(\Sigma_\fraks)$ has a small toric resolution. From Lemma
\ref{LemmaTriangClusters}, the labels of $Q_\fraks$ define a triangulation
$\Gamma_\fraks$ of $[n]$. Nohara-Ueda \cite[Theorem 1.1]{NU20} describe
an open embedding $\iota_{\Gamma_\fraks}:(\CC^\times)^{2(n-2)}\to\Gr^\vee(k,n)$
such that $\iota_{\Gamma_\fraks}^*W=W_{\Gamma_\fraks}$, where $W\in\mathcal{A}_{k,n}$ is the
Landau-Ginzburg potential defined by Marsh-Rietsch \cite{MR} and $W_{\Gamma_\fraks}$ is
a Laurent polynomial associated to the triangulation $\Gamma_\fraks$. It was
shown earlier by Nohara-Ueda \cite[Proposition 7.4]{NU14} that the polar dual
of the Newton polytope of $W_{\Gamma_\fraks}$ is a lattice polytope
$\Delta_{\Gamma_\fraks}=\Newt^\circ(W_{\Gamma_\fraks})$ (as opposed to just
a rational polytope) and that the associated toric variety $X(\Sigma^n\Delta_{\Gamma_s})$
has a small toric resolution \cite[Theorem 1.5]{NU14}. Since polar duality
exchanges normal and face fans $X(\Sigma^n\Delta_{\Gamma_\fraks})=X(\Sigma^f\Newt(W_{\Gamma_\fraks}))$.
The image of the embedding $\iota_{\Gamma_\fraks}$ is the cluster chart
$T_\fraks$ and $W_\fraks=W_{|T_\fraks}$, so $W_{\Gamma_\fraks}$ and $W_\fraks$ are
Laurent polynomials related by an
automorphism of the torus, therefore their Newton polytopes $\Newt(W_{\Gamma_\fraks})$
and $P_\fraks$ are equivalent under the action of $GL(2(n-2),\ZZ)$. We conclude
that $X(\Sigma^f\Newt(W_{\Gamma_\fraks}))\cong X(\Sigma^fP_\fraks)=X(\Sigma_\fraks)$
and therefore $X(\Sigma_\fraks)$ has a small toric resolution too.
\end{proof}

As described by Sheridan \cite{Sh}, the Fukaya category of a monotone symplectic
manifold like the Grassmannian has a spectral decomposition 
$$\Fuk(\Gr(k,n))=\bigoplus_{\lambda}\Fuk_\lambda(\Gr(k,n)) \quad .$$
The summands are $A_\infty$-categories indexed by the eigenvalues $\lambda$ of
the operator $c_1\star$ of multiplication by the first Chern class acting on the small
quantum cohomology. The objects of the $\lambda$-summand
are monotone Lagrangians with rank one local systems $L_\xi$ as described in Section \ref{SecIntro}.
The following proposition holds for general Grassmannians.

\begin{proposition}\label{PropCritPointsValues}
For any $1\leq k<n$ and any Pl\"{u}cker sequence $\fraks$ of type $(k,n)$:
\begin{enumerate}
\item if $\Fuk_\lambda(\Gr(k,n))\neq 0$ then $\lambda = n(\zeta_1+\ldots + \zeta_k)$
for some $\{\zeta_1, \ldots ,\zeta_k\}=I\subset\{\;\zeta\in\CC : \zeta^n=(-1)^{k+1}\;\}$
with $|I|=k$ ;
\item if $X(\Sigma_\fraks)$ has a small toric resolution, then
$(L_\fraks)_{\xi_I}$ is a defined and nonzero in $\Fuk_{\lambda}(\Gr(k,n))$ if and only if $S_d(I)\neq 0$
for all Young diagrams $d$ appearing as labels on the nodes of $Q_\fraks$, and
moreover $\lambda=n(\zeta_1+\cdots +\zeta_k)$ .
\end{enumerate}
\end{proposition}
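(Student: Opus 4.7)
The plan is to handle parts (1) and (2) separately: (1) from the known spectrum of $c_1\star$, and (2) by combining Proposition~\ref{PropSmallMonotone} with the critical-point theory of the Marsh-Rietsch mirror.

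For part (1), I would invoke the classical computation (Siebert-Tian, Eguchi-Hori-Xiong, Rietsch) of the eigenvalues of quantum multiplication by $c_1$ on the small quantum cohomology $\QH(\Gr(k,n))$: these are exactly the numbers $n(\zeta_1+\cdots+\zeta_k)$, where $I=\{\zeta_1,\ldots,\zeta_k\}$ ranges over size-$k$ subsets of the roots of $z^n=(-1)^{k+1}$. Since the spectral decomposition of \cite{Sh} is indexed by the eigenvalues of $c_1\star$, nonvanishing of $\Fuk_\lambda(\Gr(k,n))$ forces $\lambda$ to be of this form.

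For part (2), note first that by Definition~\ref{DefLocalSystems} the brane $(L_\fraks)_{\xi_I}$ is defined precisely when $S_d(I)\neq 0$ for every Young diagram $d$ that labels a node of $Q_\fraks$, so one direction of the ``if and only if'' is tautological. Under the small-resolution hypothesis, Proposition~\ref{PropSmallMonotone} gives $W_{L_\fraks}=W_\fraks$ in the basis of canonical cycles, and consequently
\[
m^0\bigl((L_\fraks)_{\xi_I}\bigr)\;=\;W_\fraks\bigl(\{x_d=S_d(I)\}_d\bigr).
\]
By the general criterion of \cite{Sh}, $(L_\fraks)_{\xi_I}$ represents a nonzero object of $\Fuk_\lambda$ if and only if this substitution point is a critical point of $W_\fraks$, in which case $\lambda$ is the corresponding critical value.

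The crux is therefore to identify the points $(S_d(I))_d$ as the critical points of the Marsh-Rietsch potential $W\in\mathcal{A}_{k,n}$, with critical values $n\sum_{\zeta\in I}\zeta$. This follows from Rietsch's analysis of the mirror of $\Gr(k,n)$: the $\binom{n}{k}$ critical points of $W$ on the open positroid stratum $U_{k,n}$ are in bijection with the size-$k$ subsets $I$ of roots of $z^n=(-1)^{k+1}$, and in Pl\"ucker coordinates the critical point attached to $I$ has $x_d=S_d(I)$ (essentially a Jacobi-Trudi computation applied to the explicit formula for $W$), with critical value $n\sum_{\zeta\in I}\zeta$ matching the eigenvalue from part (1). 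Since $W_\fraks=W_{|T_\fraks}$ by Proposition~\ref{PropPluckerMutations}(1), the critical point attached to $I$ lies in the cluster chart $T_\fraks$ exactly when $S_d(I)\neq 0$ for every label $d$ of $Q_\fraks$, in which case it is automatically a critical point of $W_\fraks$, and conversely any critical point of $W_\fraks$ in $T_\fraks$ extends to one of $W$ on $U_{k,n}$ and so has this form.

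The main obstacle I expect is pinning down the precise Pl\"ucker-coordinate formula $x_d(\mathrm{crit}_I)=S_d(I)$ with correct sign and normalization, and checking that no spurious critical points of $W_\fraks$ appear inside $T_\fraks$ which are not of the form $I$. Both should be extractable from \cite{MR} and from the cluster-algebraic identification of Proposition~\ref{PropPluckerMutations}, but require care with conventions relating the global superpotential $W$, its toric-chart restriction $W_\fraks$, and the Schur polynomial evaluation map used to build $\xi_I$.
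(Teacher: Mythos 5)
Your proposal is correct and follows essentially the same route as the paper: part (1) from the known spectrum of $c_1\star$ together with the Auroux--Sheridan eigenvalue constraint, and part (2) by combining $W_{L_\fraks}=W_\fraks=W_{|T_\fraks}$ with the identification of the critical points of the Marsh--Rietsch potential and the observation that chart membership is exactly nonvanishing of the Schur evaluations. The one detail you flag as the crux is handled in the paper via Rietsch's identity $S_d(I)=S_{d^T}(I^\vee)$ and the explicit Vandermonde-type critical points $[M_{I^\vee}]\in\Gr^\vee(k,n)$, for which $x_d(M_{I^\vee})/x_\emptyset(M_{I^\vee})=S_{d^T}(I^\vee)$ by the determinantal formula for Schur polynomials.
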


\begin{proof}
\begin{enumerate}
\item By \cite[Proposition 1.3]{Ca}, $\lambda\in\CC$ is an eigenvalue
of the operator $c_1\star$ of multiplication by the first Chern class acting on the small
quantum cohomology if and only if $\lambda = n(\zeta_1+\ldots + \zeta_k)$
for some $\{\zeta_1, \ldots ,\zeta_k\}=I\subset\{\;\zeta\in\CC : \zeta^n=(-1)^{k+1}\;\}$
with $|I|=k$. By Auroux \cite[Proposition 6.8]{Au}, any monotone
Lagrangian $L$ with a rank one local system $\xi$ having Floer cohomology
$HF(L_\xi,L_\xi)\neq 0$ must have $m^0(L_\xi)$ which is an eigenvalue of $c_1\star$.
\item Think of the holonomy of a local system $\xi$ on $L_\fraks$ as a point on a complex torus
$$\operatorname{hol}_\xi\in\operatorname{Hom}(H_1(L_\fraks;\mathbb{Z}))\cong(\CC^\times)^{k(n-k)}\quad .$$
The identification depends on the choice of a basis for $H_1(L_\fraks;\ZZ)$, and
we use the canonical $\gamma_d\in H_1(L_\fraks;\ZZ)$ of Definition \ref{DefPluckerTorus}.
By Proposition \ref{PropSmallMonotone} and the assumption of small resolution,
the Lagrangian torus $L_\fraks$ is monotone and has disk potential $W_{L_\fraks}=W_\fraks$.
By Auroux \cite[Proposition 6.9]{Au} and Sheridan \cite[Proposition 4.2]{Sh},
one has Floer cohomology $HF((L_\fraks)_\xi,(L_\fraks)_\xi)\neq 0$ if and only if
$\hol_\xi$ is a critical point of the disk potential. By Definition \ref{DefLocalSystems},
the local system $\xi=\xi_I$ has $\hol_{\xi_I}(\gamma_d)=S_d(I)$. Rietsch \cite[Lemma 4.4]{Ri}
proves that $S_d(I)=S_{d^T}(I^\vee)$, where $d^T\subseteq (n-k)\times k$ is the transpose Young diagram
of $d$ and $I^\vee=\{\zeta_1,\ldots \zeta_{n-k}\}$ is the set of $n-k$ distinct roots
of $\zeta^n = (-1)^{n-k+1}$ obtained by looking at the roots $I^c$ of $\zeta^n = (-1)^{k+1}$
that are not in $I$ and declaring $I^\vee=e^{\pi i}I^c$. Consider the points
$[M_{I^\vee}]\in\Gr^\vee(k,n)$ defined as
$$[M_{I^\vee}]=
\begin{bmatrix}
    1 & 1 & 1 & \dots  & 1 \\
    \zeta_1 & \zeta_2 & \zeta_3 & \dots  & \zeta_{n-k} \\
    \zeta_1^2 & \zeta_2^2 & \zeta_3^2 & \dots  & \zeta_{n-k}^2 \\
    \vdots & \vdots & \vdots &  & \vdots \\
    \zeta_1^{n-1} & \zeta_2^{n-1} & \zeta_3^{n-1} & \dots  & \zeta_{n-k}^{n-1}
\end{bmatrix} \quad ;$$
these are known to be the critical points of the Landau-Ginzburg
potential $W\in\mathcal{A}_{k,n}$ defined by Marsh-Rietsch \cite{MR}; see \cite[Theorem 1.1 and Corollary 3.12]{Ka}.
Observe that $S_{d^T}(I^\vee)=x_d(M_{I^\vee})/x_\emptyset(M_{I^\vee})$; this
follows from the expression of Schur polynomials as determinants \cite[Proposition 2.3 (1)]{Ca}.
After setting $x_\emptyset=1$, one can think of $W_\fraks$
as a regular function on the cluster chart $T_\fraks\subset\Gr^\vee(k,n)$
such that $W_\fraks=W_{|T_\fraks}$, as explained in (1) of
Proposition \ref{PropPluckerMutations}. This means that the critical points
of $W_\fraks$ are precisely those critical points $[M_{I^\vee}]\in\Gr^\vee(k,n)$
of $W$ such that $[M_{I^\vee}]\in T_\fraks$. By definition of $T_\fraks$
the latter condition is equivalent to $x_d(M_{I^\vee})\neq 0$ for all Young diagrams
$d$ appearing as labels of $Q_\fraks$, and thus $\xi_I$ is a well-defined local
system on $L_\fraks$ such that $HF((L_\fraks)_\xi,(L_\fraks)_\xi)\neq 0$ if and only if
$S_d(I)\neq 0$ for all $d$ appearing as labels of $Q_\fraks$.
\end{enumerate}
\end{proof}

\begin{lemma}\label{LemmaSimpleDyadicSpectrum}
If $n$ is odd, all the eigenvalues of $c_1\star$ acting on $\QH(\Gr(2,n))$ have
algebraic multiplicity one.
\end{lemma}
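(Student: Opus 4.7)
The plan is to combine Proposition \ref{PropCritPointsValues}(1) with a short geometric observation about chords of the unit circle, then close by a dimension count.

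First I would specialize Proposition \ref{PropCritPointsValues}(1) to $k=2$: the eigenvalues of $c_1\star$ acting on $\QH(\Gr(2,n))$ are exactly the numbers $\lambda_I = n(\zeta_1+\zeta_2)$, where $I=\{\zeta_1,\zeta_2\}$ ranges over the $2$-element subsets of the set $R_n=\{\zeta\in\CC : \zeta^n=-1\}$. Since $|R_n|=n$, there are exactly $\binom{n}{2}$ such subsets, which matches $\dim_\CC \QH(\Gr(2,n)) = \binom{n}{2}$. Because the algebraic multiplicities of the eigenvalues of any complex linear operator sum to the ambient dimension, it suffices to prove that distinct subsets $I, I' \subset R_n$ produce distinct values $\lambda_I \neq \lambda_{I'}$.

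For this I would use the following geometric remark: $\tfrac{1}{2}(\zeta_1+\zeta_2)$ is the midpoint of the chord of the unit circle with endpoints $\zeta_1$ and $\zeta_2$. Suppose two such chords share a common midpoint $M$. If $M\neq 0$, then both chords are perpendicular to the radius through $M$, hence they lie on the unique line through $M$ perpendicular to $\overline{OM}$; since this line meets the unit circle in at most two points, the endpoint sets coincide and $I=I'$.

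The one remaining possibility, and the sole point where the hypothesis that $n$ is odd enters, is $M=0$. Here $\zeta_2=-\zeta_1$ with $\zeta_1,\zeta_2\in R_n$, and
$$-1 \;=\; \zeta_2^n \;=\; (-\zeta_1)^n \;=\; (-1)^n\,\zeta_1^n \;=\; -(-1)^n,$$
forcing $(-1)^n=1$, which contradicts $n$ odd. Consequently all $\binom{n}{2}$ values $\lambda_I$ are distinct, and each occurs with algebraic multiplicity one. There is no serious obstacle in the plan; the only subtlety is remembering that oddness of $n$ is exactly what excludes an antipodal pair $\{\zeta,-\zeta\}$ from $R_n$, and hence the centered-chord degeneracy.
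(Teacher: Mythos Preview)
Your argument is correct and lands on the same reduction as the paper --- that the $\binom{n}{2}$ sums $\zeta_1+\zeta_2$ with $\{\zeta_1,\zeta_2\}\subset R_n$ are pairwise distinct --- but you carry out that step by a genuinely different route. The paper computes the norm $|\lambda|$ explicitly in terms of the angular gap $b-a$, argues that for $n$ odd exactly $(n-1)/2$ distinct norms occur, and then invokes the $\ZZ/n\ZZ$ rotational symmetry of the spectrum to place $n$ distinct eigenvalues on each norm circle, yielding $n(n-1)/2=\binom{n}{2}$. Your chord--midpoint argument bypasses both the trigonometric computation and the appeal to symmetry: it simply observes that a chord of the unit circle with non-zero midpoint is uniquely determined by that midpoint, and that $n$ odd rules out antipodal pairs in $R_n$. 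This is cleaner and more elementary; the paper's approach, on the other hand, makes the structure of the spectrum (its decomposition into concentric $\ZZ/n\ZZ$-orbits) visible, which is used elsewhere. One small remark on your opening step: the \emph{statement} of Proposition~\ref{PropCritPointsValues}(1) only asserts that eigenvalues lie among the $\lambda_I$, whereas your dimension count needs that the full spectrum (with multiplicity) is exactly the list of $\binom{n}{2}$ values $\lambda_I$; this stronger fact is what the \emph{proof} of that proposition establishes via \cite[Proposition 1.3]{Ca}, and the paper's own argument relies on it in the same implicit way.
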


\begin{proof}
It was explained in part (2) of Proposition \ref{PropCritPointsValues} that the eigenvalues of $c_1\star$ acting on $\QH(\Gr(k,n))$
correspond to critical values of the Landau-Ginzburg potential $W$ on $\Gr^\vee(k,n)$
defined by Marsh-Rietsch \cite{MR}, and that the corresponding critical points
can be explicitly described. In particular, there are ${n \choose k}$ critical points,
and thus at most the same number of critical values. Therefore the statement is
equivalent to proving that there are precisely ${n \choose 2}$ distinct eigenvalues.
From part (1) of Proposition \ref{PropCritPointsValues}, each eigenvalue is of the form
$\lambda = n(\zeta_1 + \zeta_2)$, with $\zeta_1$ and $\zeta_2$ distinct roots
of $\zeta^n = -1$. Write $\zeta_1=e^{\frac{\pi i}{n} a}$ and $\zeta_1=e^{\frac{\pi i}{n} b}$ with
$0< a < b < 2n$ odd integers. The norm of one such eigenvalue is
$$|\lambda| = n\sqrt{2}\left( 1 + 2\cos\left(\frac{\pi}{n}(b-a)\right)\right)^{1/2} \quad .$$
The function $\cos(x)$ is decreasing for $0\leq x \leq \pi$ and $\cos(2\pi-x)=\cos(x)$;
in our case $0\leq \frac{\pi}{n}(b-a)\leq \pi$ whenever $0\leq b-a\leq n$. Since $n$ is odd by
assumption, by varying $a$ and $b$
among all odd integers with $0< a < b < 2n$ one finds $l=(n-1)/2$ eigenvalues
with $0<|\lambda_1|<\cdots <|\lambda_l|$, corresponding to $b-a$ attaining all
the even integer values in the interval $[2,n-1]$. Moreover, fixed any $1\leq t\leq l$,
the $n$ complex numbers $\lambda_t, (e^{\frac{2\pi}{n}i})\lambda_t, \ldots , (e^{\frac{2\pi}{n}i})^{n-1}\lambda_t$
are eigenvalues of $c_1\star$ too, and they have the same norm as $\lambda_t$;
see also \cite[Proposition 1.12]{Ca} for more on the symmetries of the spectrum of
$c_1\star$. Overall, we found $n(n-1)/2={n \choose 2}$ distinct eigenvalues.
\end{proof}

\begin{lemma}\label{LemmaForbiddenEdges}
Let $d\subseteq 2\times (n-2)$ be a Young diagram, and denote by $d^|=\{s,t\}$
with $1\leq s<t\leq n$ its vertical steps. Writing an arbitrary set $I\subset\{\zeta\in\CC \; : \; \zeta^n=-1\}$
with $|I|=2$ as $I_{a,b}=\{e^{\frac{\pi}{n}ia}, e^{\frac{\pi}{n}ib}\}$ with $0<a<b<2n$ odd integers, then
$$S_d(I_{a,b})= 0 \quad \iff \quad n \divides \frac{b-a}{2}(t-s) \quad .$$
\end{lemma}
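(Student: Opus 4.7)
My plan is to exploit the two-variable specialization of the Jacobi--Trudi bialternant formula, which collapses $S_d$ to a geometric-series expression whose vanishing is entirely controlled by $(t-s)$.

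First I will translate the data $d^| = \{s,t\}$ into row lengths of $d$. With the profile-path convention of Section \ref{SecConstruction}, starting from the top-right corner of the $2\times(n-2)$ box: the first vertical step occurs at position $s$, so there are $s-1$ horizontal steps before it, giving $d_1 = (n-2)-(s-1) = n-1-s$; between the two vertical steps there are $t-s-1$ horizontal steps, so $d_2 = (n-2)-(s-1)-(t-s-1) = n-t$. In particular
$$d_1 + 1 - d_2 = t - s.$$

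Next, by the bialternant formula for $k=2$,
$$S_d(z_1,z_2) \;=\; \frac{z_1^{d_1+1} z_2^{d_2} - z_1^{d_2} z_2^{d_1+1}}{z_1 - z_2} \;=\; (z_1 z_2)^{d_2}\cdot\frac{z_1^{t-s} - z_2^{t-s}}{z_1-z_2}.$$
Since the two roots $z_1 = e^{\pi i a/n}$ and $z_2 = e^{\pi i b/n}$ are nonzero and distinct (as $a\neq b$ are odd integers in $(0,2n)$), one has $S_d(I_{a,b}) = 0$ if and only if $z_1^{t-s} = z_2^{t-s}$, equivalently $(z_1/z_2)^{t-s}=1$. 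Writing this out:
$$(z_1/z_2)^{t-s} = \exp\!\left(\frac{\pi i\,(a-b)(t-s)}{n}\right) = 1 \iff 2n \,\big|\, (b-a)(t-s).$$

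Finally, since $a$ and $b$ are both odd, $b-a$ is even, so $\frac{b-a}{2}\in\ZZ$, and the divisibility condition $2n \mid (b-a)(t-s)$ is equivalent to $n \mid \frac{b-a}{2}(t-s)$, as claimed.

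There is no real obstacle here: the whole argument is a direct computation once one identifies the correspondence between $\{s,t\}$ and $(d_1,d_2)$. The only thing to double-check is the orientation convention of the profile path (top-right to bottom-left), but this is fixed consistently in Section \ref{SecConstruction} and in Figure \ref{FigYoungDiagram}.
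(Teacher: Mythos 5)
Your proof is correct and is essentially the paper's argument: both reduce the vanishing of $S_d(I_{a,b})$ to the vanishing of a $2\times 2$ determinant of powers of $e^{\frac{\pi}{n}ia}$ and $e^{\frac{\pi}{n}ib}$ with exponent gap $t-s$, and then read off the divisibility condition $2n\mid (b-a)(t-s)$. The only cosmetic difference is that you apply the bialternant formula to $S_d$ directly (which makes the step more self-contained), whereas the paper routes the same computation through the transpose diagram $d^T$ and the Pl\"{u}cker minor $x_{d^T}(M_{I_{a,b}})$ already set up in Proposition \ref{PropCritPointsValues}.
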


\begin{proof}
Consider the full rank $n\times 2$ matrix
$$[M_{I_{a,b}}]=
\begin{bmatrix}
    1 & e^{\frac{\pi}{n}ia} & (e^{\frac{\pi}{n}ia})^2 & \dots  & (e^{\frac{\pi}{n}ia})^{n-1} \\
	1 & e^{\frac{\pi}{n}ib} & (e^{\frac{\pi}{n}ib})^2 & \dots  & (e^{\frac{\pi}{n}ib})^{n-1}
\end{bmatrix}^T$$
As pointed out in part (2) of Proposition \ref{PropCritPointsValues}, one has
$S_d(I_{a,b})=0$ if and only if $x_{d^T}(M_{I_{a,b}})=0$. Observe that the the
horizontal steps of the transpose diagram are $(d^T)^-=\{n+1-t, n+1-s\}$, so that
$$x_{d^T}(M_{I_{a,b}})= e^{\frac{\pi}{n}ia(n-t)}e^{\frac{\pi}{n}ib(n-s)}-e^{\frac{\pi}{n}ia(n-s)}e^{\frac{\pi}{n}ib(n-t)} \quad ,$$
from which $x_{d^T}(M_{I_{a,b}})=0$ if and only if $e^{\frac{\pi}{n}i(as+bt-at-bs)}=1$.
The last condition is verified precisely when $2n \divides (b-a)(t-s)$, and since
$b-a$ is the difference of two odd integers this can be rewritten as in the statement.
\end{proof}

\begin{theorem}\label{ThmDyadicHMS}
If $n=2^t+1$ for some $t\in\NN^+$, the derived Fukaya category $\D\Fuk(\Gr(2,2^t+1))$
is split-generated by objects supported on a single Pl\"{u}cker torus.
\end{theorem}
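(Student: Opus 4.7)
The plan is to produce a single Pl\"{u}cker sequence $\fraks$ of type $(2,n)$, with $n=2^t+1$, whose local systems $\xi_I$ probe every summand of the spectral decomposition of $\Fuk(\Gr(2,n))$. The starting point is a \emph{dyadic triangulation} $\Gamma$ of the $n$-gon on vertices $1,\ldots,2^t+1$, defined recursively: on any sub-polygon of the form $P_{2^j+1}$ (consecutive vertices), add the diagonal joining the endpoints to the midpoint and recurse on the two halves of size $2^{j-1}+1$. By induction on $j$, every interior diagonal of $\Gamma$ has length $2^i$ for some $0\leq i\leq t-1$; the boundary edges have lengths $1$ and $2^t$, also powers of $2$. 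By Lemma \ref{LemmaTriangClusters}, $\Gamma=\Gamma_\fraks$ for some Pl\"{u}cker sequence $\fraks$, and by Lemma \ref{LemmaK2} the associated Lagrangian $L_\fraks\subset\Gr(2,n)$ is monotone with disk potential $W_\fraks$.

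Next I verify that all relevant $\xi_I$ give well-defined, nonzero objects. Since $n=2^t+1$ is odd, $\gcd(2^i,n)=1$ for all $i\geq 0$, hence $\gcd(q-p,n)=1$ for every edge $\{p,q\}$ of $\Gamma$. For any admissible $I_{a,b}$ appearing in Lemma \ref{LemmaForbiddenEdges}, the integer $(b-a)/2$ satisfies $1\leq (b-a)/2\leq n-1$, so $n\nmid (b-a)/2$; combined with the coprimality above this gives $n\nmid \tfrac{b-a}{2}(q-p)$. By Lemma \ref{LemmaForbiddenEdges} one concludes $S_d(I_{a,b})\neq 0$ for every Young diagram $d$ labelling a node of $Q_\fraks$ and every admissible $I_{a,b}$. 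Proposition \ref{PropCritPointsValues}(2) then guarantees that $(L_\fraks)_{\xi_I}$ is a defined, nonzero object of $\Fuk_{\lambda_I}(\Gr(2,n))$ for every such $I$, where $\lambda_I=n(\zeta_1+\zeta_2)$ when $I=\{\zeta_1,\zeta_2\}$.

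To conclude, I combine this with the spectral information. Lemma \ref{LemmaSimpleDyadicSpectrum} asserts that the ${n\choose 2}$ eigenvalues $\lambda_I$ are pairwise distinct and of algebraic multiplicity one, so $I\mapsto \lambda_I$ is a bijection and each summand $\Fuk_\lambda(\Gr(2,n))$ of the spectral decomposition contains one of our nonzero objects $(L_\fraks)_{\xi_I}$. Semisimplicity of $\QH(\Gr(2,n))$ at each $\lambda_I$ means $\QH_{\lambda_I}\cong\CC$, and the unit of $\QH_{\lambda_I}$ (the spectral idempotent) must map under the closed-open map to a unit of $HF^*((L_\fraks)_{\xi_I},(L_\fraks)_{\xi_I})$ on any nonzero summand object. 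Applying Abouzaid's split-generation criterion in the form used by Sheridan \cite{Sh} for monotone semisimple eigenspaces, the single object $(L_\fraks)_{\xi_I}$ split-generates $\Fuk_{\lambda_I}(\Gr(2,n))$, and so the finite collection $\{(L_\fraks)_{\xi_I}\}_I$ split-generates $\D\Fuk(\Gr(2,2^t+1))$, all supported on the one torus $L_\fraks$.

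The combinatorial core (Steps~1 and~2) is routine once the dyadic recursion is written down, and Step~3 amounts to matching bijectively the roots-of-unity parameters to eigenvalues using the results already cited. The main obstacle I expect is the clean invocation of the split-generation criterion in the semisimple monotone setting: one must verify that the closed-open map $CO^0\colon QH^*(\Gr(2,n))\to HF^*((L_\fraks)_{\xi_I},(L_\fraks)_{\xi_I})$ hits the unit on each simple summand, rather than some proper nilpotent element, so that Abouzaid-Sheridan applies. This is standard given non-vanishing of $HF^*$ and the $1$-dimensionality of $QH_{\lambda_I}$, but it is the step where the general abstract machinery rather than the combinatorics of Pl\"{u}cker sequences does the work.
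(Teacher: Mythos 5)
Your proposal is correct and follows essentially the same route as the paper: construct the dyadic triangulation so that every edge length is a power of $2$, use Lemma \ref{LemmaForbiddenEdges} together with the oddness of $n=2^t+1$ (equivalently, coprimality of powers of $2$ with $n$) to show no Schur value $S_d(I_{a,b})$ vanishes, and then combine Lemma \ref{LemmaSimpleDyadicSpectrum} with Sheridan's one-dimensional-eigenspace split-generation criterion. The only differences are cosmetic — a recursive rather than layer-by-layer description of the triangulation, and a more explicit unpacking of the closed-open map behind Sheridan's Corollary 2.19.
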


\begin{proof}
Up to replacing $2$ with $n-2$, we can think of the critical points $[M_{I_{a,b}}]$ of the Landau-Ginzburg
potential $W$ on $\Gr^\vee(2,n)$ defined by Marsh-Rietsch \cite{MR} as being parametrized
by sets $I_{a,b}=\{e^{\frac{\pi}{n}ia}, e^{\frac{\pi}{n}ib}\}$ with $0<a<b<2n$ odd integers;
compare with part (2) of Proposition \ref{PropCritPointsValues}. We claim that there exists
a Pl\"{u}cker sequence $\fraks$ of type $(2,n)$ such that the corresponding cluster
chart $T_\fraks\subset\Gr^\vee(2,n)$ contains all critical points $[M_{I_{a,b}}]$. If this
is true, then these will be also critical points of the Laurent polynomial $W_\fraks=W_{|T_\fraks}$,
which is the disk potential of the monotone Lagrangian torus $L_\fraks\subset\Gr(2,n)$
by Proposition \ref{PropSmallMonotone} and Lemma \ref{LemmaK2}. By Sheridan \cite[Corollary 2.19]{Sh},
if the generalized eigenspace $\QH_\lambda(X)$ of the operator $c_1\star$
is one-dimensional, any monotone Lagrangian brane $L_\xi$ with $HF(L_\xi,L_\xi)\neq 0$
split-generates $\D\Fuk_\lambda(X)$. Since $n=2^t+1$ is odd, by Lemma \ref{LemmaSimpleDyadicSpectrum}
we can apply this to $X=\Gr(2,n)$, $L=L_\fraks$ and any
$\xi=\xi_{I_{a,b}}$ for all $0<a<b<2n$ odd integers, thus concluding that the objects
$(L_\fraks)_{I_{a,b}}$ split-generate every summand of $\D\Fuk(\Gr(2,2^t+1))$. The construction
of the Pl\"{u}cker sequence $\fraks$ mentioned in the claim above proceeds as follows.

\begin{figure}[H]
  \centering
        \includegraphics[width=0.5\textwidth]{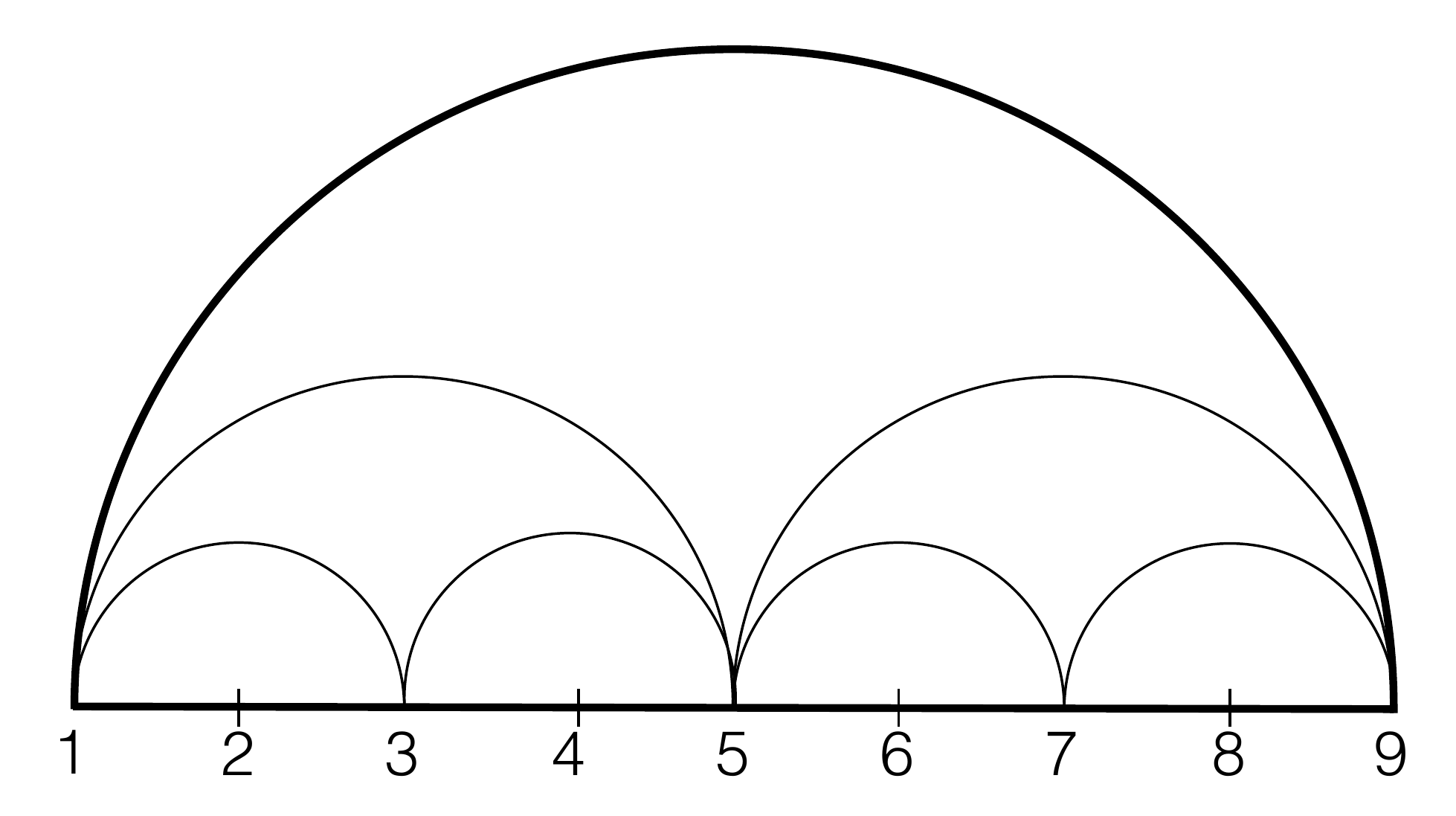}
    \caption{The dyadic triangulation of an $9$-gon.}
    \label{FigDyadicGr29}
\end{figure}

Consider the following incremental construction of a set $\Gamma$ (an example with $t=3$ is given
in Figure \ref{FigDyadicGr29}):

\begin{enumerate}
	\item start with a segment partitioned in $n-1=2^t$ intervals, which are added to $\Gamma$ as new edges
		$\{1,2\}$, $\{2,3\}$, \ldots , $\{2^t,2^t+1\}$ ;
	\item partition the segment into $(n-1)/2=2^{t-1}$ pairs of consecutive intervals,
		and add a new arc connecting the left end of the left interval to the right end
		of the right interval in each pair, thus adding new edges $\{1,3\}$,
		$\{3,5\}$, \ldots ,$\{2^{t-1}, 2^{t+1}\}$ to $\Gamma$ ;
	\item partition the segment in $(n-1)/2^2=2^{t-2}$ tuples of $2^2$ consecutive intervals,
		and add a new arc connecting the left end of the leftmost interval to the right end
		of the rightmost interval in each tuple, thus adding new edges $\{1,5\}$,
		$\{5,9\}$, \ldots ,$\{2^{t+1}-2^2, 2^{t+1}\}$ to $\Gamma$ ;
	\item proceed as above until the initial segment is partitioned in two tuples
		of $2^{t-1}$ consecutive intervals, and add the edge $\{1,n\}=\{1,2^t+1\}$ to $\Gamma$,
		so that it becomes a triangulation of $[n]$ in the sense of Definition \ref{DefTriangulation} .
\end{enumerate}

Let $(Q_0,W_0)$ be the initial seed of Definition \ref{DefInitialSeed}, and
call $\Gamma_0$ the triangulation of $[n]$ corresponding to Young diagrams
labeling the nodes of $Q_0$ as in Lemma \ref{LemmaTriangClusters}. The triangulation
$\Gamma_0$ is connected to $\Gamma$ constructed above by a sequence of flips,
which correspond to mutations of the quiver $Q_0$ at nodes with to incoming and
two outgoing arrows. From Proposition \ref{PropPluckerMutations}, this gives
a Pl\"{u}cker sequence of mutations of type $(2,n)$ in the sense of Definition
\ref{DefPluckerSequence}, which ends at $(Q_\fraks,W_\fraks)$ and such that the labels
of $Q_\fraks$ correspond to the triangulation $\Gamma_\fraks=\Gamma$, again
by Lemma \ref{LemmaTriangClusters}. It remains to show that $[M_{I_{a,b}}]\in T_\fraks$
for all odd integers $a$ and $b$ such that $0<a<b<2n$. Suppose not, then there
exist some $a,b$ and some Young diagram $d\subseteq 2\times (n-2)$ such that $x_d(M_{I_{a,b}})=0$.
By Lemma \ref{LemmaForbiddenEdges}, this implies that $n \divides \frac{b-a}{2}(t-s)$,
where $d^|=\{s,t\}$ are the vertical steps of $d$. By construction, for any $d^|\in\Gamma_\fraks$,
if $d^|=\{s,t\}$ then $t-s$ is a power of $2$, and since $n=2^t+1$ is odd by assumption
we must have $n\divides \frac{b-a}{2}$. This is impossible, because $\frac{b-a}{2}< n$.
\end{proof}

\begin{example}
$\D\Fuk(\Gr(2,9))$ is generated by a single Pl\"{u}cker
torus. Note that instead the Gelfand-Cetlin torus mentioned in Section \ref{SecIntro}
does not support enough nonzero objects to generate; compare \cite[Figure 2(C)]{Ca}.
\end{example}

The arguments above can be generalized to prove that certain collections
of Pl\"{u}cker tori split generate $\D\Fuk(\Gr(2,n))$.

\begin{definition}\label{DefPAvoiding}
Let $p$ be a prime number. A triangulation $\Gamma$ of $[n]$ as in Definition \ref{DefTriangulation}
is called $p$-avoiding if for all $\{s,t\}\in\Gamma$ one has $p \nmid (t-s)$.
\end{definition}

\begin{theorem}\label{ThmPavoidingHMS}
Let $n>2$ be odd, and consider its prime factorization $n=p_1^{e_1}\cdots p_l^{e_l}$.
Assume that for all $1\leq i\leq l$ there exists a triangulation $\Gamma_i$ of $[n]$
that is $p_i$-avoiding, then $\D\Fuk(\Gr(2,n))$ is split generated by objects
supported on $l$ Pl\"{u}cker tori.
\end{theorem}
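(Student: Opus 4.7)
The plan is to mirror the proof of Theorem \ref{ThmDyadicHMS}, but to spread the critical points of the Landau--Ginzburg potential $W\in\mathcal{A}_{2,n}$ across $l$ cluster charts rather than force them into a single one. First I would apply Lemma \ref{LemmaTriangClusters} and Proposition \ref{PropPluckerMutations} to build, for each $1\leq i\leq l$, a Pl\"{u}cker sequence $\fraks_i$ whose final quiver $Q_{\fraks_i}$ has labels indexed by $\Gamma_i$. Since $k=2$, Lemma \ref{LemmaK2} guarantees that each $L_{\fraks_i}\subset\Gr(2,n)$ is monotone with disk potential $W_{\fraks_i}$ and that the toric variety $X(\Sigma_{\fraks_i})$ admits a small toric resolution, so Proposition \ref{PropCritPointsValues}(2) applies to every $\fraks_i$.

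The heart of the proof is a divisibility dispatch. A critical point $[M_{I_{a,b}}]$ of $W$ corresponds to a set $I_{a,b}=\{e^{\pi\mathrm{i}a/n},e^{\pi\mathrm{i}b/n}\}$ with $0<a<b<2n$ both odd; setting $m=(b-a)/2$ one has $1\leq m\leq n-1$. Since $m<n=p_1^{e_1}\cdots p_l^{e_l}$ and the prime powers $p_j^{e_j}$ are pairwise coprime, there must exist some index $j$ with $p_j^{e_j}\nmid m$. Fixing such a $j$, the $p_j$-avoiding hypothesis on $\Gamma_j$ ensures that every edge $\{s,t\}\in\Gamma_j$ satisfies $p_j\nmid(t-s)$, so the $p_j$-adic valuation gives $v_{p_j}(m(t-s))=v_{p_j}(m)<e_j=v_{p_j}(n)$, hence $n\nmid m(t-s)$. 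Lemma \ref{LemmaForbiddenEdges} then yields $S_d(I_{a,b})\neq 0$ for every label $d$ of $Q_{\fraks_j}$, and Proposition \ref{PropCritPointsValues}(2) produces a nonzero object $(L_{\fraks_j})_{\xi_{I_{a,b}}}$ in $\Fuk_\lambda(\Gr(2,n))$ with $\lambda=n(e^{\pi\mathrm{i}a/n}+e^{\pi\mathrm{i}b/n})$.

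To conclude, since $n$ is odd Lemma \ref{LemmaSimpleDyadicSpectrum} shows that every generalized eigenspace $\QH_\lambda(\Gr(2,n))$ is one-dimensional, so by the same application of Sheridan \cite[Corollary 2.19]{Sh} used for Theorem \ref{ThmDyadicHMS} the object $(L_{\fraks_j})_{\xi_{I_{a,b}}}$ split-generates the summand $\D\Fuk_\lambda(\Gr(2,n))$. As the critical points $[M_{I_{a,b}}]$ exhaust the eigenvalues of $c_1\star$, the collection $\{L_{\fraks_i}\}_{i=1}^l$ jointly split-generates $\D\Fuk(\Gr(2,n))$. I expect no serious obstacle beyond the existence of $p_i$-avoiding triangulations, which is built into the hypothesis; the only conceptual shift from Theorem \ref{ThmDyadicHMS} is that instead of packing every critical point into a single cluster chart one distributes them across $l$ charts via the elementary $p$-adic valuation argument above.
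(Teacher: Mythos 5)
Your proposal is correct and follows essentially the same route as the paper: the paper's decomposition $\mathcal{C}=\mathcal{C}_{p_1}\cup\cdots\cup\mathcal{C}_{p_l}$ with $\mathcal{C}_{p_i}=\{[M_{I_{a,b}}] : p_i^{e_i}\nmid (b-a)/2\}$ is exactly your ``divisibility dispatch,'' and the paper's contradiction argument that $\mathcal{C}_{p_i}\subset T_{\fraks_i}$ via Lemma \ref{LemmaForbiddenEdges} and $p_i$-avoidance is your $p$-adic valuation computation phrased contrapositively. The concluding appeal to Lemma \ref{LemmaSimpleDyadicSpectrum} and Sheridan's split-generation criterion is identical.
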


\begin{proof}
Recall that up to replacing $2$ with $n-2$, we can think of the critical points $[M_{I_{a,b}}]$ of the Landau-Ginzburg
potential $W$ on $\Gr^\vee(2,n)$ defined by Marsh-Rietsch \cite{MR} as being parametrized
by sets $I_{a,b}=\{e^{\frac{\pi}{n}ia}, e^{\frac{\pi}{n}ib}\}$ with $0<a<b<2n$ odd integers;
compare with part (2) of Proposition \ref{PropCritPointsValues}. Denote $\mathcal{C}$
the set of all critical points of $W$, and for $1\leq i\leq l$ define
$$ \mathcal{C}_{p_i} = \{ [M_{I_{a,b}}]\in\mathcal{C} \; : \; p_i^{e_i} \nmid \frac{b-a}{2} \} \quad .$$
Observe that $\mathcal{C}=\mathcal{C}_{p_1}\cup\cdots\cup\mathcal{C}_{p_l}$. Indeed,
if $p_i^{e_i}\divides (b-a)/2$ for all $1\leq i \leq l$ then $p_1^{e_1}\cdots p_l^{e_l}=n\divides (b-a)/2$,
against the fact that $(b-a)/2 < n$. By assumption, for each $1\leq i\leq l$ there
exist a triangulation $\Gamma_i$ of $[n]$ that is $p_i$-avoiding, and arguing as
in Theorem \ref{ThmDyadicHMS} one finds a Pl\"{u}cker sequence $\fraks_i$ of type
$(2,n)$ that starts with the initial seed $(Q_0,W_0)$ and ends with $(Q_{\fraks_i},W_{\fraks_i})$,
and such that the labels of $Q_{\fraks_i}$ correspond to the triangulation $\Gamma_{\fraks_i}=\Gamma_i$
as in Lemma \ref{LemmaTriangClusters}. Each of the $l$ Pl\"{u}cker tori $L_{\fraks_i}\subset\Gr(2,n)$
has an associated cluster chart $T_{\fraks_i}\subset\Gr^\vee(2,n)$, and we claim
that $\mathcal{C}_{p_i}\subset T_{\fraks_i}$. Suppose not, then there exists some
$[M_{I_{a,b}}]\in \mathcal{C}_{p_i}$ such that $[M_{I_{a,b}}]\notin T_{\fraks_i}$.
This means that $p_i^{e_i} \nmid \frac{b-a}{2}$ and there exists some Young diagram
$d\subseteq 2\times (n-2)$ such that $x_d(M_{I_{a,b}})=0$, and denoting $d^|=\{s,t\}$
its vertical steps $\{s,t\}\in\Gamma_{\fraks_i}$. By Lemma \ref{LemmaForbiddenEdges}, this implies
that $n \divides \frac{b-a}{2}(t-s)$, and so in particular $p_i^{e_i} \divides \frac{b-a}{2}(t-s)$.
Since $\Gamma_{\fraks_i}$ is $p_i$-avoiding, this means that $p_i^{e_i} \divides \frac{b-a}{2}$,
against the fact that $[M_{I_{a,b}}]\in \mathcal{C}_{p_i}$. As in Theorem \ref{ThmDyadicHMS},
the assumption $n$ odd and Lemma \ref{LemmaSimpleDyadicSpectrum} guarantee,
by Sheridan \cite[Corollary 2.19]{Sh}, that any nonzero object of the Fukaya
category supported on one of the $l$ monotone Pl\"{u}cker tori $L_{\fraks_1},\ldots ,L_{\fraks_l}\subset\Gr(2,n)$
split-generates the summand $\D\Fuk_{\lambda}(\Gr(2,n))$ of the derived
Fukaya category containing it. The objects supported
on $L_{\fraks_i}$ are obtained by endowing it with local systems $\xi_{I_{a,b}}$
as in Definition \ref{DefLocalSystems}
corresponding to critical points $[M_{I_{a,b}}]\in T_{\fraks_i}$; these are
such that $HF((L_{\fraks_i})_{\xi_{I_{a,b}}}, (L_{\fraks_i})_{\xi_{I_{a,b}}})\neq 0$ because the
disk potential of $L_{\fraks_i}$ is $W_{\fraks_i}=W_{|T_{\fraks_i}}$.
\end{proof}

\begin{figure}[H]
  \centering
    \begin{subfigure}[b]{0.4\textwidth}
        \includegraphics[width=\textwidth]{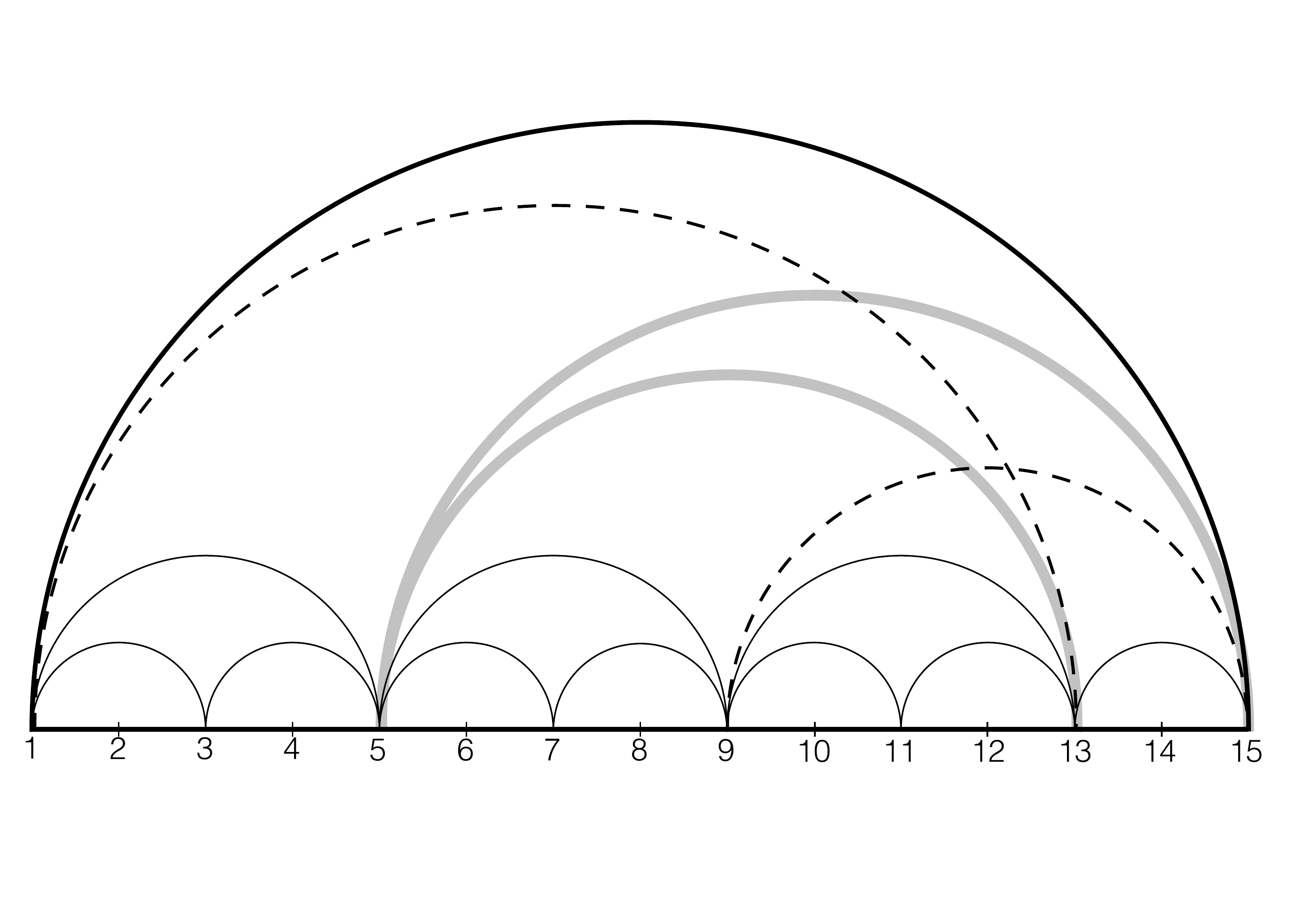}
        \caption{$3$-avoiding triangulation}
    \end{subfigure}
    \begin{subfigure}[b]{0.4\textwidth}
        \includegraphics[width=\textwidth]{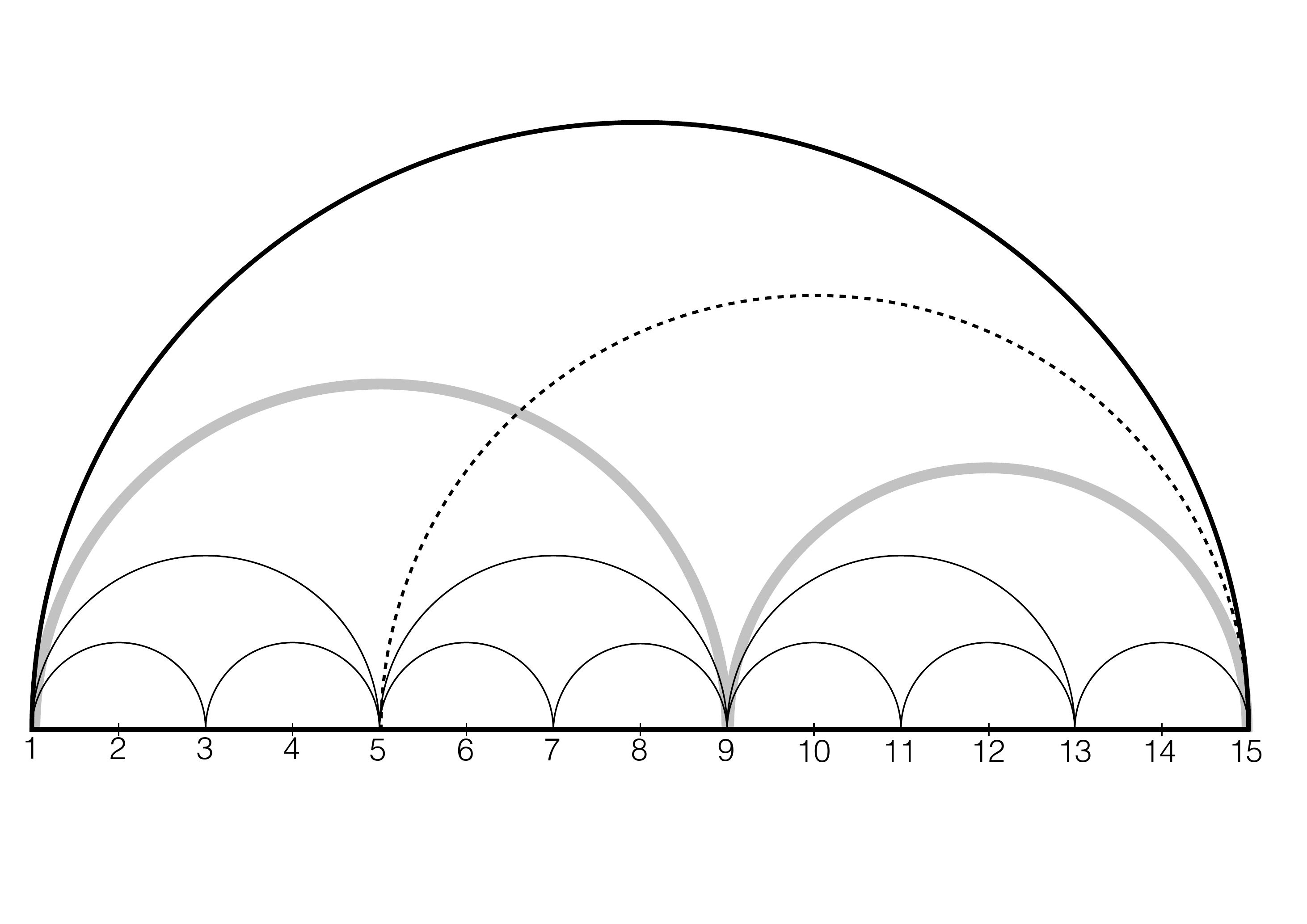}
        \caption{$5$-avoiding triangulation}
    \end{subfigure}

    \caption{Two triangulations of a $15$-gon.}
    \label{FigPluckerSeq25}
\end{figure}

\begin{example}
$\D\Fuk(\Gr(2,15))$ is generated by two Pl\"{u}cker tori, whose corresponding
triangulations are shown in Figure \ref{FigPluckerSeq25}. To get the two triangulations,
one starts by constructing a partial triangulation of the $15$-gon with dyadic arcs as in Theorem
\ref{ThmDyadicHMS} (solid arcs in Figure \ref{FigPluckerSeq25}). The partial triangulation is $p$-avoiding for every prime $p>2$
by construction. Since $15=3\cdot 5$, by Theorem \ref{ThmPavoidingHMS} one needs
to find completions of the partial triangulation to full triangulations that
are $3$-avoid and $5$-avoiding respectively. In Figure \ref{FigPluckerSeq25},
the remaining arcs $\{i,j\}$ with $3\divides (j-i)$ are coarsely dashed, while the
one with $5\divides (j-i)$ is finely dashed; triangulation (A) is obtained by
adding two shaded arcs and is $3$-avoiding, while triangulation (B) is obtained by
adding two different shaded arcs and is $5$-avoiding.
\end{example}

\subsection{Exotic Lagrangian tori in $\Gr(3,6)$}

\begin{definition}\label{DefFVector}
If $L_\fraks\subset\Gr(k,n)$ is a Pl\"{u}cker Lagrangian of type $(k,n)$,
define its $f$-vector to be
$$\mathbf{f}(L_\fraks)=(f_1,\ldots ,f_{k(n-k)})\in\NN^{k(n-k)} \quad ,$$
where $f_i$ is the number of $(i-1)$-dimensional faces in the Newton polytope $P_\fraks$
of the potential $W_\fraks$.
\end{definition}

\begin{definition}\label{DefWeight}
If $L_\fraks\subset\Gr(k,n)$ is a Pl\"{u}cker Lagrangian of type $(k,n)$,
define its weight $\mathbf{wt}(L_\fraks)\in\NN$ to be the number of sets
$$I\subset\{ \zeta \in \CC \; : \; \zeta^n = (-1)^{k+1} \}$$
such that $|I|=k$ and $S_d(I)\neq 0$ for all Young diagrams $d$ appearing as
labels of the quiver $Q_\fraks$.
\end{definition}

\begin{lemma}\label{LemmaHamInvariance}
Assume $\fraks, \fraks'$ are Pl\"{u}cker sequences of type $(k,n)$ satisfying Conjecture
\ref{ConjPotentials}. If $\mathbf{f}(L_\fraks)\neq\mathbf{f}(L_{\fraks'})$ or
$\mathbf{wt}(L_\fraks)\neq\mathbf{wt}(L_{\fraks'})$, then the Lagrangian tori
$L_\fraks,L_{\fraks'}\subset\Gr(k,n)$ are not Hamiltonian isotopic.
\end{lemma}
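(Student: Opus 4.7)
The plan is to argue by contrapositive: I suppose there is a Hamiltonian isotopy $\phi$ of $\Gr(k,n)$ with $\phi(L_\fraks)=L_{\fraks'}$, and show that both combinatorial invariants $\mathbf{f}$ and $\mathbf{wt}$ must then agree on the two tori. The key mechanism is that Conjecture \ref{ConjPotentials} upgrades the symplectic identification $\phi$ to a \emph{monomial} one at the algebraic level: there is an automorphism $\alpha$ of $T_\fraks$ such that $W_\fraks=W_{\fraks'}\circ(\phi_*\otimes\operatorname{id}_{\CC^\times})\circ\alpha$. Setting $\Phi:=(\phi_*\otimes\operatorname{id}_{\CC^\times})\circ\alpha$ gives an isomorphism of algebraic tori $\Phi\colon T_\fraks\to T_{\fraks'}$ satisfying $\Phi^*W_{\fraks'}=W_\fraks$.

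For invariance of the $f$-vector, I use that after choosing group coordinates on $T_\fraks\cong(\CC^\times)^{k(n-k)}\cong T_{\fraks'}$, any isomorphism of algebraic tori has the form $x\mapsto t\cdot x^M$ for some $t\in(\CC^\times)^{k(n-k)}$ and $M\in GL(k(n-k),\ZZ)$; pulling back a Laurent polynomial along such a map transports its Newton polytope by the unimodular transformation $M^T$. Hence $P_\fraks$ and $P_{\fraks'}$ differ by an element of $GL(k(n-k),\ZZ)$, which acts on $\RR^{k(n-k)}$ by a homeomorphism sending the face lattice of a polytope isomorphically to that of its image. In particular $\mathbf{f}(L_\fraks)=\mathbf{f}(L_{\fraks'})$.

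For invariance of the weight, I first identify $\mathbf{wt}(L_\fraks)$ with the number of critical points of $W_\fraks$ on $T_\fraks$. This uses Rietsch's identity $S_d(I)=S_{d^T}(I^\vee)$ together with the relation $S_{d^T}(I^\vee)=x_d(M_{I^\vee})/x_\emptyset(M_{I^\vee})$ recalled in the proof of part (2) of Proposition \ref{PropCritPointsValues}, and the fact that the ${n\choose k}$ critical points of the Marsh-Rietsch potential $W\in\mathcal{A}_{k,n}$ are precisely the points $[M_{I^\vee}]\in\Gr^\vee(k,n)$: the sets $I$ counted by $\mathbf{wt}(L_\fraks)$ correspond bijectively to those $I^\vee$ with $[M_{I^\vee}]\in T_\fraks$, which are exactly the critical points of $W_\fraks=W_{|T_\fraks}$. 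Since $\Phi$ is a regular isomorphism of tori, it induces a bijection between the critical loci of $W_{\fraks'}$ and $\Phi^*W_{\fraks'}=W_\fraks$, giving $\mathbf{wt}(L_\fraks)=\mathbf{wt}(L_{\fraks'})$.

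The step that needs most care is the reconciliation of the two descriptions of $\mathbf{wt}$: its definition uses Schur values at $I$ and the labels of $Q_\fraks$, whereas the critical-point parametrization naturally involves the dual set $I^\vee$ and the transposed Young diagrams $d^T$. The cited Schur identities bridge the two, and one should also verify that $x_\emptyset(M_{I^\vee})\neq 0$ for every $I$ so that no critical point is lost in passing to Schur quotients; this is immediate from the Vandermonde-type formula for $M_{I^\vee}$. With that in hand, the remainder of the argument is formal: $f$-vector invariance is a general property of unimodular transformations of lattice polytopes, and critical-count invariance is a general property of regular isomorphisms of varieties.
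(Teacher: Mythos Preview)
Your argument is correct and follows the same contrapositive strategy as the paper: assume a Hamiltonian isotopy exists, invoke Conjecture \ref{ConjPotentials} to relate $W_\fraks$ and $W_{\fraks'}$ by a torus automorphism, then deduce that both the Newton polytope (up to $GL(k(n-k),\ZZ)$) and the critical-point count are preserved. The only difference is one of exposition: the paper simply appeals to part (2) of Proposition \ref{PropCritPointsValues} for the identification of $\mathbf{wt}(L_\fraks)$ with the number of critical points of $W_\fraks$, whereas you unpack that identification explicitly via the Schur--Pl\"ucker relation $S_d(I)=x_d(M_{I^\vee})/x_\emptyset(M_{I^\vee})$, which has the advantage of making clear that the small-resolution hypothesis in that proposition is not needed here.
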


\begin{proof}
Suppose that there exists a Hamiltonian isotopy $\phi$ such
that $\phi(L_\fraks)=L_{\fraks'}$. Then by assumption the induced map $\phi_*: H_1(L_\fraks;\ZZ)\to H_1(L_{\fraks'};\ZZ)$
is such that
$$W_{\fraks} \sim W_{\fraks'} \circ (\phi_* \otimes \operatorname{id}_{\CC^\times}) \quad ,$$
where $\sim$ denotes equality up to automorphisms of $T_\fraks$.
This means that the Newton polytopes $P_\fraks$ and $P_{\fraks'}$ of the Laurent
polynomials $W_\fraks$ and $W_{\fraks'}$ are related by a transformation of
$GL(k(n-k),\ZZ)$, and hence have the same $f$-vector, because the number of faces
of any given dimension of a polytope is a unimodular invariant; this proves that
$\mathbf{f}(L_\fraks)=\mathbf{f}(L_{\fraks'})$. Moreover, the Laurent
polynomials $W_\fraks$ and $W_{\fraks'}$ can be thought of as regular functions
on a torus $(\CC^\times)^{k(n-k)}$, which agree up to an automorphism. Since
the number of critical points of a function is invariant under automorphisms
of its domain, it follows from part (2) of Proposition \ref{PropCritPointsValues}
that $\mathbf{wt}(L_\fraks)=\mathbf{wt}(L_{\fraks'})$.
\end{proof}

\begin{table}[H]\label{Table36}
\resizebox{\textwidth}{!}{%
\begin{tabular}{ cccc  }
	\hline
	\multicolumn{4}{|c|}{$k=3$, $n=6$} \\
	\hline
 	$L_\fraks$ &	Labels of $Q_\fraks$ &	$\mathbf{f}(L_\fraks)$	& $\mathbf{wt}(L_\fraks)$ \\
 	\hline
	1	&	123, 124, 125, 126, 156, 234, 245, 256, 345, 456	&	(14, 83, 276, 571, 766, 670, 372, 122, 20)	&	18\\
	2	&	123, 124, 125, 126, 145, 156, 234, 245, 345, 456	&	(14, 83, 276, 571, 766, 670, 372, 122, 20)	&	18\\
	3	&	123, 125, 126, 135, 145, 156, 234, 235, 345, 456	&	(15, 91, 302, 615, 807, 690, 376, 122, 20)	&	6\\
	4	&	123, 126, 134, 136, 146, 156, 234, 345, 346, 456	&	(14, 83, 276, 571, 766, 670, 372, 122, 20)	&	18\\
	5	&	123, 126, 156, 234, 235, 236, 245, 256, 345, 456	&	(15, 91, 302, 615, 807, 690, 376, 122, 20)	&	6\\
	6	&	123, 125, 126, 156, 234, 235, 245, 256, 345, 456	&	(14, 83, 276, 571, 766, 670, 372, 122, 20)	&	18\\
	7	&	123, 124, 125, 126, 134, 145, 156, 234, 345, 456	&	(15, 91, 302, 615, 807, 690, 376, 122, 20)	&	18\\
	8	&	123, 125, 126, 134, 135, 145, 156, 234, 345, 456	&	(16, 98, 322, 645, 832, 701, 378, 122, 20)	&	6\\
	9	&	123, 124, 126, 146, 156, 234, 245, 246, 345, 456	&	(16, 98, 322, 645, 832, 701, 378, 122, 20)	&	6\\
	10	&	123, 126, 156, 234, 236, 246, 256, 345, 346, 456	&	(15, 91, 302, 615, 807, 690, 376, 122, 20)	&	6\\
	11	&	123, 124, 126, 146, 156, 234, 246, 345, 346, 456	&	(15, 91, 302, 615, 807, 690, 376, 122, 20)	&	6\\
	12	&	123, 124, 126, 145, 146, 156, 234, 245, 345, 456	&	(15, 91, 302, 615, 807, 690, 376, 122, 20)	&	18\\
	13	&	123, 126, 146, 156, 234, 236, 246, 345, 346, 456	&	(16, 98, 322, 645, 832, 701, 378, 122, 20)	&	6\\
	14	&	123, 126, 156, 234, 236, 256, 345, 346, 356, 456	&	(14, 83, 276, 571, 766, 670, 372, 122, 20)	&	18\\
	15	&	123, 126, 146, 156, 234, 236, 245, 246, 345, 456	&	(18, 111, 358, 700, 882, 728, 386, 123, 20)	&	6\\
	16	&	123, 126, 156, 234, 235, 236, 256, 345, 356, 456	&	(14, 83, 276, 571, 766, 670, 372, 122, 20)	&	18\\
	17	&	123, 124, 126, 134, 145, 146, 156, 234, 345, 456	&	(14, 83, 276, 571, 766, 670, 372, 122, 20)	&	18\\
	18	&	123, 126, 134, 135, 136, 156, 234, 345, 356, 456	&	(16, 98, 322, 645, 832, 701, 378, 122, 20)	&	6\\
	19	&	123, 126, 134, 136, 145, 146, 156, 234, 345, 456	&	(14, 83, 276, 571, 766, 670, 372, 122, 20)	&	18\\
	\rowcolor{lightgray} 20	&	123, 126, 135, 136, 145, 156, 234, 235, 345, 456	&	(15, 93, 317, 661, 882, 760, 413, 132, 21)	&	6\\
	21	&	123, 126, 136, 146, 156, 234, 236, 345, 346, 456	&	(15, 91, 302, 615, 807, 690, 376, 122, 20)	&	18\\
	22	&	123, 125, 126, 134, 135, 156, 234, 345, 356, 456	&	(18, 111, 358, 700, 882, 728, 386, 123, 20)	&	6\\
	23	&	123, 126, 136, 156, 234, 235, 236, 345, 356, 456	&	(14, 83, 276, 571, 766, 670, 372, 122, 20)	&	18\\
	24	&	123, 126, 134, 135, 136, 145, 156, 234, 345, 456	&	(15, 91, 302, 615, 807, 690, 376, 122, 20)	&	6\\
	25	&	123, 124, 126, 156, 234, 245, 246, 256, 345, 456	&	(15, 91, 302, 615, 807, 690, 376, 122, 20)	&	6\\
	26	&	123, 126, 134, 136, 156, 234, 345, 346, 356, 456	&	(15, 91, 302, 615, 807, 690, 376, 122, 20)	&	18\\
	27	&	123, 126, 135, 136, 156, 234, 235, 345, 356, 456	&	(15, 91, 302, 615, 807, 690, 376, 122, 20)	&	6\\
	28	&	123, 124, 126, 134, 146, 156, 234, 345, 346, 456	&	(14, 83, 276, 571, 766, 670, 372, 122, 20)	&	18\\
	29	&	123, 126, 156, 234, 236, 245, 246, 256, 345, 456	&	(16, 98, 322, 645, 832, 701, 378, 122, 20)	&	18\\
	30	&	123, 126, 136, 156, 234, 236, 345, 346, 356, 456	&	(14, 83, 276, 571, 766, 670, 372, 122, 20)	&	18\\
	31	&	123, 125, 126, 145, 156, 234, 235, 245, 345, 456	&	(14, 83, 276, 571, 766, 670, 372, 122, 20)	&	18\\
	32	&	123, 125, 126, 135, 156, 234, 235, 345, 356, 456	&	(16, 98, 322, 645, 832, 701, 378, 122, 20)	&	6\\
	33	&	123, 125, 126, 156, 234, 235, 256, 345, 356, 456	&	(15, 91, 302, 615, 807, 690, 376, 122, 20)	&	18\\
	\rowcolor{lightgray} 34	&	123, 124, 126, 156, 234, 246, 256, 345, 346, 456	&	(15, 93, 317, 661, 882, 760, 413, 132, 21)	&	6\\
	\hline
\end{tabular}}
\end{table}

\begin{theorem}\label{ThmExoticTori}
The Grassmannian $\Gr(3,6)$ contains at least $6$ monotone Lagrangian tori
that are non-displaceable and pairwise inequivalent under Hamiltonian isotopy.
\end{theorem}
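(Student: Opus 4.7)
The plan is to exhibit six explicit Plücker sequences $\fraks_1,\dots,\fraks_6$ of type $(3,6)$ whose associated Lagrangians $L_{\fraks_i}\subset\Gr(3,6)$ are distinguished by the pair of invariants $(\mathbf{f}(L_{\fraks_i}),\mathbf{wt}(L_{\fraks_i}))$ introduced in Definitions \ref{DefFVector} and \ref{DefWeight}, and then to invoke Lemma \ref{LemmaHamInvariance} to conclude pairwise non-isotopy. Inspecting Table \ref{Table36}, the 34 sequences there realize exactly six distinct pairs of invariants, namely those corresponding to (for instance) rows $1, 3, 7, 8, 15$ and $20$, whose $f$-vectors and weights are all pairwise different. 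So I would single out these six sequences explicitly and use them as the candidate tori.

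For each $\fraks_i$, the first step is to verify that $X(\Sigma_{\fraks_i})$ admits a small toric resolution in the sense of Definition \ref{DefSmallResolution}. This is a finite combinatorial check on the face fan of the Newton polytope $P_{\fraks_i}$: one has to exhibit a refinement $\widetilde{\Sigma}_{\fraks_i}$ of $\Sigma_{\fraks_i}$ that introduces no new rays, such that every maximal cone becomes unimodular. Once this is done, Proposition \ref{PropSmallMonotone} immediately yields that $L_{\fraks_i}$ is monotone and that its disk potential in the canonical basis of cycles is exactly $W_{\fraks_i}$. This makes Lemma \ref{LemmaHamInvariance} unconditional for these particular tori (bypassing Conjecture \ref{ConjPotentials}), because the honest Hamiltonian invariance of the disk potential gives the required identification of $W_{\fraks_i}$ with $W_{\fraks_j}$ up to torus automorphism whenever $L_{\fraks_i}$ and $L_{\fraks_j}$ are Hamiltonian isotopic. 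Since the six pairs $(\mathbf{f},\mathbf{wt})$ are distinct, no two of the $L_{\fraks_i}$ are Hamiltonian isotopic. Non-displaceability then follows from $\mathbf{wt}(L_{\fraks_i})\geq 6>0$: by part (2) of Proposition \ref{PropCritPointsValues} each $L_{\fraks_i}$ supports a local system $\xi_{I}$ with nonzero Floer self-cohomology, and by the standard Cho--Oh/Auroux/Sheridan obstruction the existence of a critical point of $W_{\fraks_i}$ implies $L_{\fraks_i}$ cannot be Hamiltonianly displaced.

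The main obstacle is the small toric resolution check for the six selected fans, since the singularities of $X(\Sigma_{\fraks_i})$ worsen significantly compared with the Gelfand--Cetlin case (only one of the six tori is Gelfand--Cetlin type); in practice this is where the computer computation from \cite{CaCode} becomes essential, both to generate the Plücker sequences and to read off the fans. A secondary technical point is checking that the $f$-vectors and weights are computed from a fixed convention and genuinely separate the six candidates, which amounts to verifying by hand the entries of Table \ref{Table36} for the six chosen rows. Everything else -- monotonicity, identification of disk potentials with $W_{\fraks_i}$, Hamiltonian invariance of the two invariants, and non-displaceability -- is then a direct appeal to results already established in Sections \ref{SecConstruction} and \ref{SecLagrangians}.
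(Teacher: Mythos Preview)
Your overall strategy coincides with the paper's: pick six Pl\"ucker sequences from the table, verify small toric resolutions so that Proposition~\ref{PropSmallMonotone} applies, then invoke Lemma~\ref{LemmaHamInvariance} for pairwise non-isotopy and the existence of a critical point of $W_\fraks$ for non-displaceability. However, your specific choice of rows contains a genuine gap.

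You select rows $1,3,7,8,15,20$. Row~$20$ is one of the two entries (the other being row~$34$) for which the computation in \cite{CaCode} did \emph{not} produce a small toric resolution of $X(\Sigma_\fraks)$; these are exactly the rows shaded gray in the table. So for row~$20$ you cannot appeal to Proposition~\ref{PropSmallMonotone}, and hence you do not know that $L_{\fraks_{20}}$ is monotone or that $W_{\fraks_{20}}$ is its disk potential. Without this, neither Lemma~\ref{LemmaHamInvariance} nor part~(2) of Proposition~\ref{PropCritPointsValues} applies unconditionally to that torus, and both the non-isotopy and the non-displaceability claims for it collapse. Relatedly, your assertion that the $34$ rows realize ``exactly six distinct pairs'' is off by one: there are seven pairs $(\mathbf{f},\mathbf{wt})$ in the table, but the seventh pair $\bigl((15,93,317,661,882,760,413,132,21),\,6\bigr)$ occurs only in the two gray rows $20$ and $34$, so it is unusable.

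The fix is simply to replace row~$20$ by row~$29$, which has $(\mathbf{f},\mathbf{wt})=\bigl((16,98,322,645,832,701,378,122,20),\,18\bigr)$, a pair distinct from the other five and realized by a row with a verified small resolution. This is precisely the paper's selection $1,3,7,8,15,29$. For non-displaceability, the paper argues slightly differently from you: rather than reading off $\mathbf{wt}>0$, it observes that the unique totally positive critical point $[M_{I_0}]$ of $W$ lies in every cluster chart $T_\fraks$, hence every $W_\fraks$ has a critical point; your argument via $\mathbf{wt}\geq 6>0$ is also valid once the small-resolution hypothesis is in place.
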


\begin{proof}
The table above contains informations about the steps of a Pl\"{u}cker
sequence $\fraks$ of type $(3,6)$. In each row, the reader can find the Young
diagrams $d\subseteq 3\times 3$ appearing as labels of $Q_\fraks$ at a given step,
identified by their sets of vertical sets $\{i,j,k\}\subset [6]$. Each potential
$W_\fraks$ has an associated Newton polytope $P_\fraks$, whose $f$-vector is
$\mathbf{f}(L_\fraks)$ as in Definition \ref{DefFVector}. Following Definition
\ref{DefWeight}, the weight $\mathbf{w}(L_\fraks)$ is computed by counting
how many of the ${6 \choose 3}$ sets $I$ of roots of $\zeta^6=1$
with $|I|=3$ have the property that $S_d(I)\neq 0$ for all Young diagrams
$d\subseteq 3\times 3$ that appear as labels on the nodes of the quiver $Q_\fraks$.
Calling $\Sigma_\fraks=\Sigma^fP_\fraks$ the face fan of the Newton polytope,
by Proposition \ref{PropSmallMonotone} the Lagrangian torus $L_\fraks\subset\Gr(k,n)$
is monotone and has disk potential $W_\fraks$ whenever the toric variety
$X(\Sigma_\fraks)$ has a small toric resolution in the sense of Definition
\ref{DefSmallResolution}. This condition can be checked algorithmically at
each step, since every fan has finitely many simplicial refinements with the same
rays, and every smooth refinement is in particular simplicial. For the
$34$ steps in the table, the code \cite{CaCode} finds small resolutions in $32$
cases; the remaining $2$ cases are marked gray in the table (we did not actually
check all possible simplicial refinements in these cases, so small toric resolutions
for them may still exist). From Lemma \ref{LemmaHamInvariance},
we conclude that $\Gr(3,6)$ contains at least $6$ monotone Lagrangian tori
that are pairwise not Hamiltonian isotopic: rows 1, 3, 7, 8, 15, 29.
Regarding nondisplaceability, it suffices to show that the $32$ tori $L_\fraks\subset\Gr(3,6)$
have Floer cohomology $HF(L_\xi, L_\xi)\neq 0$ for some local
system $\xi$. By Auroux \cite[Proposition 6.9]{Au} and Sheridan
\cite[Proposition 4.2]{Sh}, the Floer cohomology of a monotone
Lagrangian torus brane $L_\xi$ is nonzero if and only if the holonomy $\hol_\xi$ of
its local system $\xi$ is a critical point of the disk potential disk potential
$W_\fraks$. Therefore, it suffices to show that each of the $32$ Laurent
polynomials $W_\fraks$ has at least one critical point. Thinking $W_\fraks$
as restriction $W_\fraks=W_{|T_\fraks}$ of the Landau-Ginzburg potential $W$
on $\Gr^\vee(3,6)$ defined by Marsh-Rietsch \cite{MR} to the cluster chart
$T_\fraks\subset\Gr^\vee(3,6)$, it suffices to show that each of the charts contains
at least one critical point of $W$. In fact, something stronger is true: there is a critical point of $W$
that is contained in $T_\fraks$ for all $\fraks$. As proved by Rietsch \cite{Ri06} (see also Karp
\cite{Ka}), for any $1\leq k<n$ there is a (unique) critical point of $W$ in the totally positive part
$\Gr^\vee(k,n)_{>0}\subset\Gr^\vee(k,n)$, i.e. the locus where all Pl\"{u}cker coordinates
are real and positive. Following the notation of part (2) in Proposition \ref{PropCritPointsValues},
this point is $[M_{I_0}]\in\Gr^\vee(k,n)$ with $I_0$ the set of $k$ roots
of $\zeta^n=(-1)^{k+1}$ closest to $1$. Applying this to $(k,n)=(3,6)$, and
recalling that $[M_{I_0}]\in T_\fraks$
if and only if $x_d(M_{I_0})$ for all Young diagrams $d\subseteq 3\times 3$ appearing
as labels on the nodes of $Q_\fraks$, we conclude that the the total positivity of $[M_{I_0}]$
implies that it belongs to every cluster chart $T_\fraks$, and this proves that
all $L_\fraks$ are nondisplaceable.
\end{proof}

\begin{remark}
We emphasize that the arguments of Theorem \ref{ThmExoticTori} prove that any
$L_\fraks\subset\Gr(k,n)$ is nondisplaceable as long as $W_\fraks = W_{L_\fraks}$.
This is due to the fact that the tori $L_\fraks$ correspond to cluster
charts $T_\fraks\subset\Gr^\vee(k,n)$ by construction, and that $W$ has a critical point in the intersection
of all such charts.
\end{remark}

\begin{remark}
It was shown in \cite[Theorem 4.8]{Ca} that the dihedral group
$$D_n = \langle \; r,s \; | \; r^n=s^2=1, \; rs = sr^{-1} \; \rangle$$
acts on the set of critical points of $W\in\mathcal{A}_{k,n}$, and that the cluster chart
$T_\fraks$ is invariant under the action of the subgroup $\langle r\rangle$. Since
$\mathbf{wt}(L_\fraks)$ is the number of critical points in $T_\fraks$, the fact
that $\ZZ /n\ZZ$ acts on it puts some arithmetic constraints on this number.
\end{remark}

\bibliographystyle{abbrv}
\bibliography{biblio}

\end{document}